\documentclass[11pt]{amsart}

\usepackage{amsmath,amssymb,amsthm,mathtools}
\usepackage{enumitem}
\usepackage{tikz}
\usepackage[hidelinks]{hyperref}

\numberwithin{equation}{section}
\theoremstyle{plain}
\newtheorem{theorem}{Theorem}[section]
\newtheorem{corollary}[theorem]{Corollary}
\newtheorem{proposition}[theorem]{Proposition}
\newtheorem{lemma}[theorem]{Lemma}
\newtheorem{claim}[theorem]{Claim}
\newtheorem*{theoremA}{Theorem A}
\newtheorem*{theoremB}{Theorem B}
\newtheorem*{structurethm}{Structure Theorem}
\newtheorem*{theoremC}{Theorem C}
\newtheorem*{theoremD}{Theorem D}
\theoremstyle{definition}
\newtheorem{definition}[theorem]{Definition}
\newtheorem{remark}[theorem]{Remark}
\newtheorem{convention}[theorem]{Convention}

\DeclareMathOperator{\girth}{girth}
\DeclareMathOperator{\halfgirth}{half\text{-}girth}
\newcommand{\F}{\mathbb F}
\newcommand{\out}{\mathrm{out}}
\newcommand{\inn}{\mathrm{in}}
\newcommand{\one}{\mathbf 1}
\newcommand{\eps}{\varepsilon}
\newcommand{\abs}[1]{\lvert #1\rvert}
\newcommand{\defect}{D}

\title[Global girth obstruction for taiko product structures]{A global girth obstruction for Garg--Mineyev taiko product structures}

\author{Henry Shin}
\address{San Diego, CA, USA}
\email{hkshin@gmail.com}
\subjclass[2020]{Primary 20C07, 16S34; Secondary 16U60, 20F65, 20F67, 05C25, 05C35, 05B25}
\keywords{Kaplansky conjectures; group rings; torsion-free CAT(0) groups; support size; taikos; incidence geometry}
\hypersetup{
  pdftitle={A global girth obstruction for Garg--Mineyev taiko product structures},
  pdfauthor={Henry Shin},
  pdfkeywords={Kaplansky conjectures; group rings; torsion-free CAT(0) groups; support size; taikos; incidence geometry}
}

\begin{document}

\begin{abstract}
Mineyev's taiko construction, in Garg--Mineyev's finite support-size formulation, gives a
concrete route from finite support data to zero divisors and units in group rings of torsion-free
$\mathrm{CAT}(0)$ groups over $\F_2$. We prove that this triple-girth product-structure route is
globally closed. No product structure, even or odd, with support sizes $m,n\ge2$ admits a coherent
orientation for which no-fold and the triple-girth condition hold. Consequently the
Garg--Mineyev triple-girth product-structure assembly route produces neither zero-divisor nor unit
counterexamples over $\F_2$ for any such support-size pair.

The obstruction is structural, not a bounded-search artifact. High middle-link girth forces signed
colors into a balanced near-disjoint rectangle decomposition of the board, with the single odd
defect omitted. The product identity, pressure inequalities, Fisher inequalities, and a dual
Fisher bound force the middle link to have girth $4$ or $6$; in the girth-six case, the minimum of
the two horizontal-link girths is at most $5$. This dichotomy rules out every triple-girth branch.
A weighted dual Fisher inequality and an exact finite certificate sharpen the frontier: if the
middle link has girth $6$, the horizontal girth is at most $4$, and characteristic-two affine-plane
constructions attain equality. Thus the Garg--Mineyev finite failures reflect a structural barrier
in the taiko geometry itself. The finite certificate is used only for this sharper frontier, not
for the no-$\mathsf T_4$ obstruction.
\end{abstract}

\maketitle

\section{Introduction}

Kaplansky's zero-divisor conjecture predicts that if $K$ is a field and $G$ is a torsion-free
group, then the group ring $K[G]$ has no nontrivial zero divisors. It belongs to the classical
Kaplansky problems on group rings of torsion-free groups, with antecedents in Higman's work on
units \cite{Higman1940,Kaplansky1957,Kaplansky1970} (for background on group rings see
\cite{Passman}). The related unit conjecture is now known to be false over many coefficient fields: Gardam gave a concrete
counterexample over $\F_2$ \cite{GardamUnit}, Murray extended the phenomenon to every prime
characteristic \cite{Murray}, and Gardam later produced nontrivial units in complex group rings,
disproving the characteristic-zero case \cite{GardamComplex}. The zero-divisor conjecture remains
open. These are finite-support problems at their core: a zero-divisor relation or a nontrivial unit
is witnessed by finitely many group elements, and the way those supports multiply is a finite
combinatorial pattern inside the group.

The classical unique-product route makes this finite-support viewpoint explicit. In a
unique-product group, the product of two nonempty finite sets has a uniquely represented element,
which rules out the cancellations needed for zero divisors over domains. Torsion-free groups need
not have unique product, as shown by Rips--Segev and by later concrete families of Promislow and
Carter \cite{RipsSegev,Promislow,Carter}; beyond the unique-product regime, finite support
patterns become subtle rather than automatically forced apart. For torsion-free
$\mathrm{CAT}(0)$ groups, Mineyev's \emph{taiko}, or \emph{product-structure}, construction gives
one of the few explicit ways to turn such finite support data into a geometrically controlled
group-ring construction \cite{MineyevOrigami,GargMineyev}.

In the quantitative form developed by Garg and Mineyev, the two support sizes are encoded by a
finite product structure $\Pi=(A,B,P)$ of type $(m,n)$, with $|A|=m$ and $|B|=n$.  When the taiko
conditions hold, the assembly theorem gives a zero divisor over $\F_2$ in the even case and a
nontrivial unit in the odd case \cite[Theorem~16]{GargMineyev}.  The relevant hypotheses are
orientability $\mathsf T_1$, the no-fold condition $\mathsf T_2$, and the triple-girth condition
$\mathsf T_4$.  The latter requires the two
horizontal graphs $L_A,L_B$ (with $L_{AB}=L_A\sqcup L_B$) and a middle link $L_1$ to have high
girth simultaneously, in one of three calibrated regimes. For any graph $G$ set
\[
        \halfgirth(G):=\frac{\girth(G)}2,
\]
with the convention $\infty/2=\infty$; equivalently, $\halfgirth(G)\ge q$ means
$\girth(G)\ge 2q$. Thus the calibrated regimes are
\[
        (p,q)\in\{(6,3),\,(4,4),\,(3,6)\},
        \qquad \girth(L_{AB})\ge p,\quad \halfgirth(L_1)\ge q.
\]
Explicitly, the three regimes are
\[
\begin{aligned}
(6,3):&\quad \girth(L_{AB})\ge 6,\ \girth(L_1)\ge 6;\\
(4,4):&\quad \girth(L_{AB})\ge 4,\ \girth(L_1)\ge 8;\\
(3,6):&\quad \girth(L_{AB})\ge 3,\ \girth(L_1)\ge 12 .
\end{aligned}
\]
The additional condition $\mathsf T_3$ plays no role in a nonexistence theorem: any structure
satisfying all four taiko conditions satisfies the three we obstruct, and, under the same simple
middle-link convention used in the taiko conditions, a repeated pattern is a $4$-cycle in the
middle link \cite[\S2.9]{GargMineyev}.

Garg and Mineyev proved that every orientable product structure has $\halfgirth(L_1)\le4$
\cite[Theorem 1]{GargMineyev}, and by exhaustive search they excluded all types with one support
size at most five, all pairs in the square $1\le m,n\le13$, and further finite ranges
\cite[Theorems 2 and 3]{GargMineyev}.  These exclusions continue the small-support and
combinatorial-encoding tradition for group rings, exemplified by Schweitzer's work on zero
divisors with small support and matched-rectangle methods \cite{Schweitzer}. The question left
open is whether these finite failures are artifacts of bounded search, or whether the taiko
geometry itself contains a global obstruction to the triple-girth route.

We prove that the obstruction is global: the finite searches were detecting a structural
incompatibility, not merely running out of room.

\begin{theorem}\label{thm:main}
No product structure $\Pi=(A,B,P)$, even or odd, with $\abs A,\abs B\ge 2$ admits a coherent
orientation $O$ such that $(\Pi,O)$ is no-fold and satisfies the triple-girth condition
$\mathsf T_4$.
\end{theorem}

\begin{corollary}\label{cor:route}
Under the hypotheses of Garg--Mineyev's Theorem~16, as identified in Section~\ref{sec:taiko}, no product
structure with both support sizes at least two satisfying $\mathsf T_1$, $\mathsf T_2$, and
$\mathsf T_4$ can feed the taiko assembly. Consequently that triple-girth product-structure
assembly route produces neither a zero-divisor counterexample \textup{(even case)} nor a unit
counterexample \textup{(odd case)} to the Kaplansky conjectures over $\F_2$, for any such
support-size pair.
\end{corollary}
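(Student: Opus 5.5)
The corollary is a formal consequence of Theorem~\ref{thm:main}, once the hypotheses of \cite[Theorem~16]{GargMineyev} are matched with the notions used there. I would carry this out in three steps, taking the dictionary of Section~\ref{sec:taiko} as given.

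\textbf{Step 1: identify the hypotheses.} Using that dictionary, I would check that an input to the Garg--Mineyev assembly is a product structure $\Pi=(A,B,P)$ of type $(m,n)$ together with an orientation $O$ satisfying $\mathsf T_1$--$\mathsf T_4$. Under that identification:
\begin{itemize}
\item $\mathsf T_1$ (orientability) is precisely the existence of a coherent orientation $O$;
\item $\mathsf T_2$ is the no-fold condition for $(\Pi,O)$;
\item $\mathsf T_4$ is the requirement that $(\girth(L_{AB}),\halfgirth(L_1))$ lies in one of the calibrated regimes $(6,3)$, $(4,4)$, $(3,6)$.
\end{itemize}
I expect this matching to be the only step with real content. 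The point to verify is that the links $L_A$, $L_B$, $L_1$ and their girths, as used in Theorem~\ref{thm:main}, are the same objects as in \cite{GargMineyev}. In particular, the middle link must be taken with the same simple-graph convention, so that the girth thresholds transfer verbatim.

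\textbf{Step 2: discard $\mathsf T_3$.} Any structure satisfying all four taiko conditions in particular satisfies $\mathsf T_1$, $\mathsf T_2$ and $\mathsf T_4$. Theorem~\ref{thm:main} says that no $\Pi$ with $\abs A,\abs B\ge2$, even or odd, admits such an $O$. Hence no structure can meet the hypotheses of \cite[Theorem~16]{GargMineyev} in either parity. So the assembly has no admissible input, and $\mathsf T_3$ plays no role in this argument.

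\textbf{Step 3: conclude for both conjectures.} \cite[Theorem~16]{GargMineyev} produces a zero divisor in $\F_2[G]$ from an even structure and a nontrivial unit from an odd one. Since neither kind of structure exists for any $m,n\ge2$, this route yields no counterexample of either kind.

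This argument does not show that the Kaplansky conjectures hold for $\mathrm{CAT}(0)$ groups. It only closes this particular construction route.
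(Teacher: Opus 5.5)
Your proposal is correct and follows essentially the same route as the paper: it identifies the Theorem~16 inputs with $\mathsf T_1,\mathsf T_2,\mathsf T_4$ on the collapsed links via Lemmas~\ref{lem:collapsed-middle} and~\ref{lem:gm-dictionary}, then applies Theorem~\ref{thm:main}. The one point the paper makes explicit and you leave implicit is that Garg--Mineyev's Theorem~16 also has a formulation $(2)$ stated without no-fold; the equivalence $(2)\Leftrightarrow(2')$ within that theorem is what lets the no-fold obstruction cover every assembly input, not only inputs already presented with $\mathsf T_2$.
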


Thus the finite failures found by Garg and Mineyev are not small-support accidents: the
triple-girth condition is incompatible with orientability and no-fold in the taiko framework
itself.  The result is deliberately scoped. It is not a proof of the zero-divisor conjecture and
not a contradiction to the known unit counterexamples of Gardam and Murray; it rules out the
$\mathsf T_1,\mathsf T_2,\mathsf T_4$ taiko mechanism, for every support-size pair with both coordinates at least two, in both the even and
odd forms of the construction.  The odd half is especially telling for the group-ring story:
nontrivial units over $\F_2$ do exist, but none can arise from this triple-girth taiko route.

The proof is structural rather than an enlargement of the finite search.  The middle link is not
a technical auxiliary graph; it is the place where the support pattern rigidifies. High middle-link
girth forces the signed colors to form a near-disjoint rectangle incidence geometry on the board.
That geometry supplies an exact product identity, Fisher-type inequalities, and pressure
inequalities.  These constraints yield a dichotomy for the middle link, kill all three regimes of
$\mathsf T_4$, and at the same time explain why the obstruction is sharp rather than a blunt
counting artifact: affine planes realize the boundary case, while a final finite profile
certificate only closes the last $(5,3)$ frontier.

\subsection*{Results and proof architecture}
The proof of Theorem~\ref{thm:main} starts from the middle link. The two girth hypotheses act on
different layers of the object, but once one passes to the middle link the picture becomes
incidence-geometric. Throughout, for a signed color $s$ we
write $A_s$ and $B_s$ for the vertices of $A$ and $B$ carrying $s$, and write $s\ni v$ for
$v$ carrying $s$. The defect parameter is
$\delta\in\{0,1\}$: in the even case $\delta=0$ there is no defect; in the odd case $\delta=1$
the unique defect is a single board position $(a_0,b_0)$. We write $D_A\subseteq A$ and
$D_B\subseteq B$ for its two projections, so that $D_A=D_B=\varnothing$ when $\delta=0$ and
$D_A=\{a_0\}$, $D_B=\{b_0\}$ when $\delta=1$; thus the indicators $\one_{a\in D_A}$ and
$\one_{b\in D_B}$ are well defined in both cases and vanish identically when $\delta=0$.
In these introductory summaries, ``even or odd in the standing taiko sense'' means exactly this
size-two-cell situation, with at most one defect and $mn\equiv\delta\pmod2$; the formal standing
convention is fixed in Section~\ref{sec:taiko}.

\begin{structurethm}[Theorem~\ref{thm:structure}]
Let $\Pi$ be an orientable no-fold product structure, even or odd in the standing taiko sense, with
$\abs A,\abs B\ge 2$. Then, because the fixed no-fold orientation makes $L_1$ the simple bipartite incidence graph of
Lemma~\ref{lem:collapsed-middle}, $\girth(L_1)\ge 6$ \emph{if and only if} the near-disjointness condition
\textup{(ii)} below holds. Under the standing hypotheses, this near-disjointness together with
Lemma~\ref{lem:cross} makes $L_1$ the Levi graph of a partial linear space (with point set
$A\sqcup B$, line set the signed colors, and incidence $v\in A_s\cup B_s$): the rectangles
$A_s\times B_s$, indexed by the signed colors $s$, partition
$(A\times B)\setminus\defect$, and:
\begin{enumerate}[label=\textup{(\roman*)},leftmargin=2.6em]
\item \textup{(Tiling)} for each $a\in A$, $\ \sum_{s\ni a}\abs{B_s}=n-\one_{a\in D_A}$,
and dually for each $b\in B$;
\item \textup{(Near-disjointness)} for distinct $s\ne t$, $\ \abs{A_s\cap A_t}+\abs{B_s\cap B_t}\le 1$;
\item \textup{(Balance)} the signed colors pair as $(c,\out),(c,\inn)$ with
$\abs{A_{(c,\out)}}=\abs{A_{(c,\inn)}}=:x_c$ and $\abs{B_{(c,\out)}}=\abs{B_{(c,\inn)}}=:y_c$;
\end{enumerate}
and consequently
\begin{enumerate}[label=\textup{(\roman*)},leftmargin=2.6em,start=4]
\item \textup{(Product identity)} $\ \sum_s\abs{A_s}\abs{B_s}=mn-\delta$, that is
$2\sum_c x_cy_c=mn-\delta$;
\item \textup{(Half-size)} $\ x_c\le\lfloor\tfrac{m+1}2\rfloor$ and $y_c\le\lfloor\tfrac{n+1}2\rfloor$;
\item \textup{(Pressure)} $\ y_c(m-x_c)\le 2e_A-x_c+\delta$ and $x_c(n-y_c)\le 2e_B-y_c+\delta$,
where $e_A=\abs{E(L_A)}$, $e_B=\abs{E(L_B)}$;
\item \textup{(Fisher bounds)} $\ \displaystyle\sum_c x_c(x_c-1)\le\binom m2$ and
$\displaystyle\sum_c y_c(y_c-1)\le\binom n2$;
\item \textup{(Dual Fisher)} $\ \displaystyle\sum_{a\in A}\binom{d_A(a)}2+\sum_{b\in B}\binom{d_B(b)}2\le\binom r2$,
where $d_A(a)=\deg_{L_A}(a)$, $d_B(b)=\deg_{L_B}(b)$, and $r$ is the number of signed colors.
\end{enumerate}
\end{structurethm}

Thus the middle link records a purely incidence-geometric object, and $\girth(L_1)\ge 6$ is
\emph{exactly} the linearity of that object (Figure~\ref{fig:rectangles}). Equivalently, the
rectangles $A_s\times B_s$ form
a \emph{balanced near-disjoint biclique partition} of the complete bipartite graph $K_{m,n}$
minus the defect edge: a decomposition of its edges into complete bipartite subgraphs (the
signed colors), invariant under the involution $(c,\out)\leftrightarrow(c,\inn)$ with matching
part-sizes, in which any two parts meet in at most one vertex. Biclique partitions of complete
and complete bipartite graphs are classical \cite{GrahamPollak}; here near-disjointness is the
partial-linear-space condition, and the absence of a $4$-cycle in $L_1$ is the
Zarankiewicz--K\H{o}v\'ari--S\'os--Tur\'an condition \cite{KovariSosTuran} that no two lines
meet twice. A line may carry several points of one class --- $\abs{A_s}>1$ or $\abs{B_s}>1$,
the generic situation; what is forbidden is two points sharing two lines. The numerical
consequences (iv)--(vi) drive everything that follows, and the condition $\mathsf T_4$ now
bifurcates by which layer carries the extra girth.

\begin{theoremA}[high middle-link girth; Theorem~\ref{thm:A}]
No orientable no-fold product structure, even or odd in the standing taiko sense, with
$\abs A,\abs B\ge 2$ and $\girth(L_1)\ge 8$ exists. Equivalently, \emph{every} such orientable
no-fold product structure has $\girth(L_1)\le 6$. In particular both the $(4,4)$ and $(3,6)$ branches of $\mathsf T_4$ are
empty.
\end{theoremA}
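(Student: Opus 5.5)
The plan is to work entirely inside the incidence geometry given by the Structure Theorem (Theorem~\ref{thm:structure}). We assume $\girth(L_1)\ge 8$. Then in particular $\girth(L_1)\ge6$, so $L_1$ is the Levi graph of a partial linear space on $A\sqcup B$ whose lines are the signed colors. The rectangles $A_s\times B_s$ partition $(A\times B)\setminus\defect$. The extra hypothesis rules out $6$-cycles in $L_1$, i.e.\ triangles of the geometry: three pairwise collinear points never lie on three distinct lines. I want to show this is incompatible with the tiling property (i), under which every non-defect pair $(a,b)\in A\times B$ is collinear.

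\emph{Step 1: lines are thin.} Fix a signed color $s$ with two points $b\neq b'$ in $B_s$ and some $a\in A_s$. By the half-size bound (v) and $m\ge2$ we have $|A_s|<m$, so there is $a'\in A\setminus A_s$; when $\delta=1$ I would choose $a'\ne a_0$ if possible. Tiling supplies lines $u\ni a',b$ and $w\ni a',b'$. If $u=w$, then $u$ and $s$ share both $b$ and $b'$, contradicting near-disjointness (ii). So $u\neq w$, and
\[
a'-u-b-s-b'-w-a'
\]
is a $6$-cycle in $L_1$, a contradiction. Hence $|B_s|\le1$ for every $s$, and symmetrically $|A_s|\le1$. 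The defect needs separate care when $m=2$ or $n=2$ and the only available $a'$ is $a_0$; there I would pick a different $b$ or use the partner color $(c,\inn)$ supplied by balance (iii). Consequently $x_c=y_c=1$ for all $c$: every line is a single cell, and the product identity (iv) becomes $r=mn-\delta$.

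\emph{Step 2: kill the cell-geometry.} At this point the geometry is just $K_{m,n}$ minus the defect edge, which has girth $4$ as a graph but gives girth $8$ in $L_1$. So pure incidence counting cannot finish, and this is the step I expect to be the main obstacle. I would bring back the remaining numerical constraints, the pressure inequalities (vi) and the dual Fisher inequality (viii), which tie the colors to the horizontal links. With $x_c=y_c=1$, pressure gives $e_A\ge (m-\delta)/2$ and similarly for $e_B$. Dual Fisher gives
\[
\sum_a\binom{d_A(a)}2+\sum_b\binom{d_B(b)}2\le\binom{mn-\delta}{2}.
\]
The first thing to try is to return to the definition of $L_A$, $L_B$ and the coherent orientation in Section~\ref{sec:taiko}. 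In the single-cell regime, each signed color $(c,\out),(c,\inn)$ should correspond to a pair of board cells joined through horizontal edges. The goal is to show that no-fold then forces two colors through a common $a$ to be linked through a common $b$ in a way that produces a $4$- or $6$-cycle in $L_1$. If the local argument fails, the fallback is a double count of the number of ordered pairs of colors sharing a point, comparing $\sum_a\binom{n-\one_{a\in D_A}}2$ with the edge count forced by balance. The aim is to show that the pairing $(c,\out)\leftrightarrow(c,\inn)$ of single cells cannot be coherent for $m,n\ge2$, since the colors would have to realize a perfect pairing of the cells while every pair of cells sharing a row closes up through the orientation.

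Finally, the assertion that every orientable no-fold structure has $\girth(L_1)\le6$ is just the contrapositive, using that $L_1$ is bipartite. The emptiness of the $(4,4)$ and $(3,6)$ branches follows because these require $\girth(L_1)\ge8$ and $\girth(L_1)\ge12$ respectively.
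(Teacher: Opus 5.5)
\textbf{Step 1 is sound, with one misplaced defect case.} Your thin-lines argument is slightly more uniform than the paper's Lemma~\ref{lem:thin}. Choosing $a'\in A\setminus A_s$ via $\abs{A_s}\le h_m<m$ produces the $6$-cycle directly, without needing $n\ge 4$. So it also covers $m=2$ and the shape $(3,3)$, which the paper handles separately in Lemmas~\ref{lem:m2} and~\ref{lem:threethree}. The misplacement: $m=2$ or $n=2$ forces $\delta=0$, so there is no defect there. The only configuration with no admissible $a'$ is the odd case with $m=3$, $A\setminus A_s=\{a_0\}$, $B_s=\{b,b_0\}$ (and its mirror with $n=3$). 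In that case, run the mirror argument instead: use the two points of $A_s$ together with any $b''\in B\setminus B_s$, which is automatically $\ne b_0$.

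\textbf{The genuine gap is Step 2, which you leave open.} The tools you propose do not close it. With $x_c=y_c=1$, pressure only gives the lower bound $2e_A\ge m-\delta$. Dual Fisher with $r=mn-\delta$ is nowhere near tight. You are right that the abstract incidence geometry (a subdivided $K_{m,n}$ minus an edge) is compatible with girth $8$. What rules it out is the no-fold degree identity of Lemma~\ref{lem:simple}, $\abs{\Sigma(a)}=\deg_{L_A}(a)\le m-1$. This ties point degrees in $L_1$ to degrees in a simple graph on $m$ vertices. Once $\abs{B_s}=1$ for every $s$, tiling~(i) gives $\abs{\Sigma(a)}=n-\one_{a\in D_A}$. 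So any $a\notin D_A$ yields $n\le m-1$, and symmetrically any $b\notin D_B$ yields $m\le n-1$, a contradiction. The paper's Lemma~\ref{lem:collapse} is the global form of the same count: $mn-\delta=r=\sum_s\abs{A_s}=2e_A\le m(m-1)$, which is impossible for $m\le n$. Your fallback already writes $\binom{n-\one_{a\in D_A}}{2}$, so you had $\abs{\Sigma(a)}$ in hand. The missing step is simply to compare it with $\deg_{L_A}(a)\le m-1$. No orientation-coherence or pairing argument about $(c,\out)\leftrightarrow(c,\inn)$ is needed.
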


\noindent
In the language of half-girth this reads $\halfgirth(L_1)\le 3$. Garg and Mineyev prove
$\halfgirth(L_1)\le 4$ for every orientable product structure \cite[Theorem 1]{GargMineyev},
without a no-fold hypothesis; our bound is sharper but assumes no-fold, which is part of the
taiko input $\mathsf T_1,\mathsf T_2,\mathsf T_4$ we are obstructing. The improvement is exactly what eliminates the $(4,4)$
branch, which the bound $\halfgirth(L_1)\le 4$ leaves open. It is optimal: $\halfgirth(L_1)=3$
is attained (Remark~\ref{rem:example}). The improvement is not formal --- it uses the
full-board tiling identity (iv), which is unavailable to the subpartition method of
\cite{GargMineyev} --- and our proof of Theorem~A is independent of their Theorem~1.

\begin{theoremB}[the $(6,3)$ branch; Theorem~\ref{thm:B}]
No orientable no-fold product structure, even or odd in the standing taiko sense, with
$\abs A,\abs B\ge 2$ and $\girth(L_A),\girth(L_B),\girth(L_1)\ge 6$ exists. Hence the
$(6,3)$ branch of $\mathsf T_4$ is empty.
\end{theoremB}

Theorem~A makes no hypothesis on the horizontal graphs: it is a bound on the middle link of
\emph{any} such orientable no-fold product structure. The single shape $(3,3)$, which carries the
only special case, is dispatched not by a horizontal-girth assumption but by the observation
that the relevant middle link is forced to be the subdivision of $K_4$
(Lemma~\ref{lem:threethree}). Theorem~B, in turn, is what makes the bound of Theorem~A sharp
from one side: it says the value $\girth(L_1)=6$ can occur only when the horizontal graphs are
themselves of low girth. Combining the two yields a clean dichotomy, which we take to be the
structural heart of the paper.

\begin{corollary}[middle-link dichotomy; Corollary~\ref{cor:dichotomy}]
Let $\Pi$ be an orientable no-fold product structure, even or odd in the standing taiko sense,
with $\abs A,\abs B\ge 2$. With the convention that forests have girth $\infty$,
$\girth(L_1)\in\{4,6\}$. Moreover if $\girth(L_1)=6$ then $\girth(L_{AB})\le 5$; equivalently,
$\girth(L_{AB})\ge 6$ forces $\girth(L_1)=4$.
\end{corollary}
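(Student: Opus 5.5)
The plan is to assemble the corollary from Theorem~A, Theorem~B and the Structure Theorem, reducing everything to parity and to ruling out forests. The proof has four steps.

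\emph{Step 1: $\girth(L_1)$ is even.} By Lemma~\ref{lem:collapsed-middle}, the no-fold orientation makes $L_1$ a simple bipartite graph. One side is the point set $A\sqcup B$ and the other side is the set of signed colors. So every cycle has even length at least $4$, and in particular $\girth(L_1)\ne 5,7$.

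\emph{Step 2: $\girth(L_1)\le 6$.} Theorem~A states that no orientable no-fold structure of this kind has $\girth(L_1)\ge 8$. Under our convention, $\girth=\infty$ counts as $\ge 8$, so Theorem~A also excludes the case where $L_1$ is a forest. This is the only place needing care: I must check that the statement of Theorem~A is read with the convention $\infty\ge 8$, as the corollary's convention indicates.

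If one prefers a direct argument, $L_1$ contains a cycle for the following reason. By the Structure Theorem the product identity (iv) gives $2\sum_c x_cy_c=mn-\delta\ge 3$ when $m,n\ge 2$. Hence there is a color $c$ with both signed colors $(c,\out)$ and $(c,\inn)$ present. Balance (iii) gives $x_c,y_c\ge1$. Tiling (i) then forces each non-defect vertex to lie on enough colors to cover $n$ or $m$ board cells. Counting edges of $L_1$ against vertices, $\sum_s(x_s+y_s)$ versus $r+m+n$, should exceed a forest's bound. I would rely on Theorem~A and use this count only as a sanity check.

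Steps 1 and 2 together give $\girth(L_1)\in\{4,6\}$.

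\emph{Step 3: if $\girth(L_1)=6$ then $\girth(L_{AB})\le 5$.} Since $L_{AB}=L_A\sqcup L_B$, we have $\girth(L_{AB})=\min(\girth L_A,\girth L_B)$. Suppose $\girth(L_1)=6$ and $\girth(L_{AB})\ge 6$. Then $\girth(L_A),\girth(L_B),\girth(L_1)\ge 6$, which contradicts Theorem~B. If a horizontal graph is a forest its girth is $\infty\ge6$, and that case is covered in the same way.

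\emph{Step 4: the equivalent form.} This is the contrapositive of Step 3. If $\girth(L_{AB})\ge6$, then by Step 3 we have $\girth(L_1)\ne6$, and by Steps 1 and 2 this leaves $\girth(L_1)=4$.

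I expect no real obstacle here. The only delicate point is the forest convention in Step 2.
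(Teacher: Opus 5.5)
Your proposal is correct and is essentially the paper's own proof. Bipartiteness of $L_1$ makes its girth even or infinite, and Theorem~A (with the standing convention $\infty\ge 8$, which the paper states explicitly, so forests are excluded) leaves $\girth(L_1)\in\{4,6\}$; Theorem~B together with $\girth(L_{AB})=\min\{\girth(L_A),\girth(L_B)\}$ gives the horizontal bound, and the last clause is the contrapositive. The optional edge-counting check in Step~2 is incomplete as written but also unnecessary, so relying on Theorem~A there is the right choice.
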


The three branches of $\mathsf T_4$ now fall out as a single corollary of the dichotomy. The
$(6,3)$ branch demands $\halfgirth(L_1)\ge 3$, i.e.\ $\girth(L_1)\ge 6$, which by the dichotomy
forces $\girth(L_1)=6$ and hence $\girth(L_{AB})\le 5$ --- contradicting its own demand
$\girth(L_{AB})\ge 6$. The $(4,4)$ and $(3,6)$ branches demand $\girth(L_1)\ge 8$, contradicting
the dichotomy's $\girth(L_1)\le 6$ directly. Both values of the dichotomy occur, so it is sharp
and non-vacuous: an explicit $(3,3)$ structure realizes $\girth(L_1)=4$, while Garg and
Mineyev's Example~10 --- a full $(4,4)$ orientable no-fold structure with $\girth(L_{AB})=3$ ---
realizes $\girth(L_1)=6$ (Remark~\ref{rem:example}). In particular the bound
$\halfgirth(L_1)\le 3$ of Theorem~A is optimal, and the no-repeated-pattern condition
$\mathsf T_3$ is satisfiable; what fails is only its conjunction with high horizontal girth.

The dichotomy's horizontal bound $\girth(L_{AB})\le 5$ is itself nearly sharp: the value
$\girth(L_1)=6$ persists even when the horizontal graphs are made triangle-free, as an infinite
affine-plane family shows.

\begin{theoremC}[affine-plane sharpness; Theorem~\ref{thm:affine}]
For every power of two $q\ge 4$ there is an even orientable no-fold product structure of type
$(2q,2q)$ with
\[
        \girth(L_1)=6,\qquad L_A\cong L_B\cong K_{q,q},
\]
so $\halfgirth(L_1)=3$ and $\girth(L_{AB})=4$. Consequently the pair
$(\girth(L_{AB}),\halfgirth(L_1))=(4,3)$ is realized, and the horizontal frontier
\[
\begin{aligned}
        p^\ast=\min\{\,p\in\mathbb Z_{\ge3}:&\ \nexists\,\Pi\text{ orientable no-fold, in the standing sense, with}\\
        &\ \girth(L_{AB}(\Pi))\ge p\text{ and }\halfgirth(L_1(\Pi))\ge 3\,\}
\end{aligned}
\]
satisfies $p^\ast\in\{5,6\}$.
\end{theoremC}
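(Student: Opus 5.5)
The plan is to build the structure explicitly from the affine plane $\mathrm{AG}(2,q)$ over $\F_q$, $q=2^k\ge4$, and to certify it through the Structure Theorem (Theorem~\ref{thm:structure}) rather than by checking the taiko axioms one cell at a time. The closing frontier statement is then a formal consequence. The construction realizes $(\girth(L_{AB}),\halfgirth(L_1))=(4,3)$, so $p=4$ does not satisfy the defining nonexistence condition of $p^\ast$ and hence $p^\ast\ge5$. Theorem~B, together with Theorem~A to pass from $\halfgirth(L_1)\ge3$ to $\girth(L_1)=6$, shows that $p=6$ does satisfy it, so $p^\ast\le6$.

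\emph{Construction.} Split $A=A_0\sqcup A_1$ and $B=B_0\sqcup B_1$, and identify each half with $\F_q$. I would index the colors $c$ by points $(u,v)\in\F_q^2$, possibly together with a parallel-class label. The rectangle of $(c,\out)$ uses the ``slope'' coordinates of lines through $c$ in one half. The rectangle of $(c,\inn)$ is its image under the translation $x\mapsto x+1$ on the $A_1,B_1$ halves. Since $\F_q$ has characteristic two, this translation is a fixed-point-free involution, and this gives the balance condition~(iii) with equal $x_c,y_c$ for free. The sizes are chosen so that the product identity~(iv) reads $2\sum_c x_cy_c=4q^2$. For example, half of the colors could have $(x_c,y_c)=(1,2)$ and the other half $(2,1)$, which gives $\sum_c x_cy_c=2q^2$. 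The decomposition must also satisfy the tiling condition~(i): each $a\in A_i$ meets exactly one rectangle in each column class. This is verified by the parallel-class decomposition of $\F_q^2$: each point lies on exactly one line of each slope.

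\emph{Near-disjointness and girth.} Condition~(ii) reduces to the incidence fact that two distinct lines of $\mathrm{AG}(2,q)$ meet in at most one point, and that two points span exactly one line. By the Structure Theorem, this gives $\girth(L_1)\ge6$. To get equality, I would exhibit three signed colors pairwise meeting in three distinct points, that is, a triangle of the underlying partial linear space, which is a $6$-cycle in its Levi graph $L_1$. For the horizontal graphs, two vertices of $A$ are adjacent in $L_A$ exactly when they lie in opposite halves and are linked by the designated color rule. The construction is arranged so that every $a\in A_0$ is adjacent to every $a'\in A_1$ and there are no edges inside a half. Hence $L_A\cong K_{q,q}$, and symmetrically $L_B\cong K_{q,q}$, so $\girth(L_{AB})=4$. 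As consistency checks, I would confirm the dual Fisher bound~(viii) and the pressure inequalities~(vi) with these degrees $d_A\equiv q$. Any failure there would signal a mis-specified color rule.

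\emph{Main obstacle.} The hardest step is showing that the combinatorial data actually come from a product structure with a \emph{coherent orientation} that is \emph{no-fold}. The Structure Theorem gives only necessary conditions, so I must return to the definitions in Section~\ref{sec:taiko} and verify, cell by cell, that the out/in labels around each size-two cell are consistent. My first attempt would be to define the orientation by the $\F_2$-linear functional given by the trace $\mathrm{Tr}_{\F_q/\F_2}$ applied to the slope. Coherence then reduces to additivity of the trace, and no-fold reduces to the translation by $1$ having no fixed points. The hypothesis $q\ge4$ is needed so that every vertex has degree $q\ge4$ and the required triangle exists without the small degeneracies that occur at $q=2$. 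I expect the characteristic-two hypothesis to enter exactly here: in odd characteristic the involution $x\mapsto -x$ has a fixed point and produces a fold.
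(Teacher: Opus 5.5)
There is a genuine gap. The proposal never specifies a product structure, and the concrete choices it does commit to cannot work.

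First, in this framework colors and orientations are not assigned. Colors are the classes generated by the $2$-cells, so the object you must construct is the cell partition of $A\times B$. Moreover, Theorem~\ref{thm:structure} presupposes an orientable no-fold product structure, so it cannot be used to certify one.

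Second, your bookkeeping contradicts $L_A\cong L_B\cong K_{q,q}$, which forces $e_A=\sum_c x_c=q^2=e_B$ together with $\sum_c x_cy_c=2q^2$. Indexing colors by all $q^2$ points (let alone points with parallel-class labels) forces $x_c=y_c=1$, hence $\sum_c x_cy_c=q^2$. Your sample profile (half the colors of type $(1,2)$, half of type $(2,1)$, with $\sum_c x_cy_c=2q^2$) needs $q^2$ colors and then has $e_A=e_B=3q^2/2\neq q^2$.

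Third, an involution acting only on the $A_1,B_1$ halves is fatal. It fixes every position of $A_0\times B_0$. Every such position lies in a $2$-cell and so is covered by some signed color $s$, and then it is also covered by the partner of $s$. That gives two points on two common signed colors, i.e.\ a $4$-cycle in $L_1$.

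The trace-of-slope orientation is not needed either. Coherence is automatic once each cell is written as (pair of tails, pair of heads). The real issue, which you never address, is showing that the cell-generated color classes do not merge.

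The missing idea is to let the characteristic-two involution act on \emph{points} of $\mathrm{AG}(2,q)$, so that it moves every selected line. Take $A$ to be the lines of two directions $\alpha,\beta$, and $B$ the lines of two further directions $\gamma,\omega$, with $\bar\alpha=\beta$, $\bar\beta=\alpha$, $\bar\gamma=\omega$, $\bar\omega=\gamma$. Choose $v\neq0$ of a fifth direction; this is where $q\ge4$ enters, since $q=2$ has only three directions. Because $2v=0$, the map $P\mapsto P+v$ is a fixed-point-free involution; the map $x\mapsto -x$ is not the relevant one, being the identity in characteristic two. Index colors by its $q^2/2$ orbits and signed colors by points, with $A_s=\{\ell_\alpha(Q),\ell_\beta(Q)\}$ and $B_s=\{\ell_\gamma(Q),\ell_\omega(Q)\}$, so $x_c=y_c=2$. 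The cells are $\kappa(P,i,j)=\{(\ell_i(P),\ell_j(P)),(\ell_{\bar i}(P+v),\ell_{\bar j}(P+v))\}$ for $P$ in a transversal $R$ of the orbits. One then checks:
\begin{itemize}
\item the cells partition $A\times B$, since two lines of different directions meet in one point;
\item $L_A\cong K_{q,q}$, since an $\alpha$-line $L$ and a $\beta$-line $M$ are joined through the point $L\cap(M-v)$;
\item the generated colors are exactly the orbits, since an edge determines its orbit;
\item orienting tails at $P$ and heads at $P+v$ is coherent by construction;
\item no-fold holds, since the two tails of $c_P$ are the two lines through $P$ and the two heads are the two lines through $P+v$.
\end{itemize}
Then $L_1$ is the point--line incidence graph of $\mathrm{AG}(2,q)$ on four parallel classes. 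Your triangle argument for $\girth(L_1)=6$ and your derivation of $p^\ast\in\{5,6\}$ then go through. Theorem~B alone gives $p^\ast\le6$; Theorem~A is not needed.
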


In this construction the middle link is the point--line incidence graph of $\mathrm{AG}(2,q)$
restricted to four parallel classes, the signed-color rectangles form a balanced biclique
partition of $K_{2q,2q}$, and each horizontal graph is the complete bipartite graph between its
two classes. It is the matching lower construction to the obstruction: the dichotomy forbids
$\girth(L_{AB})\ge 6$ at $\halfgirth(L_1)=3$, and affine planes reach $\girth(L_{AB})=4$. Whether
girth five can be inserted between them is settled by the next theorem.

\begin{theoremD}[computer-assisted closure of the $(5,3)$ frontier; Theorems~\ref{thm:asymp53} and~\ref{thm:p5}]
No orientable no-fold product structure, even or odd in the standing taiko sense, has
$\girth(L_1)\ge 6$ together with $\girth(L_A),\girth(L_B)\ge 5$. Hence
$\girth(L_1)=6\Rightarrow\girth(L_{AB})\le 4$, and the horizontal frontier above satisfies
$p^\ast=5$. The unbounded part is excluded by a weighted dual Fisher inequality with a
near-Moore reduction; the bounded remainder reduces to twenty-four explicit small types, each
ruled out by a finite check over signed-color profiles.
\end{theoremD}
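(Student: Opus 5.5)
The plan is to reduce Theorem~D to the incidence-geometric data of the Structure Theorem and treat it in two regimes, an unbounded one and a bounded one. Assume $\Pi$ is orientable and no-fold, even or odd in the standing sense, with $\girth(L_1)\ge 6$ and $\girth(L_A),\girth(L_B)\ge 5$. By the Structure Theorem the signed colors give a balanced near-disjoint biclique partition of $K_{m,n}$ minus the defect. We then have the product identity $2\sum_c x_cy_c=mn-\delta$, the half-size bounds, the pressure inequalities, the two Fisher bounds and the dual Fisher bound. The horizontal graphs $L_A,L_B$ are now $C_3$- and $C_4$-free, so their edge counts obey Moore-type bounds. Writing $d_A(a)$ for degrees, a graph of girth at least $5$ on $m$ vertices satisfies $\sum_a\binom{d_A(a)}2\le\binom m2$, since no two vertices share two neighbours. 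This gives $e_A\lesssim \tfrac12 m^{3/2}$ and the analogous bound for $e_B$.

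First I will extract a lower bound on the horizontal degrees from the pressure inequalities. Summing $y_c(m-x_c)\le 2e_A-x_c+\delta$ over colors, weighted by $x_c$, and then using the product identity should force $e_A$ to grow roughly like $mn/(\text{number of colors})$. On the other side, near-disjointness and Fisher bound the number of colors $r$ from above in terms of $m,n$. Next I will sharpen the dual Fisher inequality into a weighted version. Each pair of signed colors meeting at a vertex is counted with a weight reflecting $x_c,y_c$, and the girth-$5$ hypothesis says distinct pairs of horizontal neighbours cannot reuse a line, which improves the $\binom r2$ right-hand side. Combining this upper bound with the degree lower bound through convexity (Jensen on $\sum\binom{d}{2}$) should give an inequality of the form $f(m,n)\le 0$ that fails once $\min(m,n)$ is beyond an explicit threshold. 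If it is not tight enough, a near-Moore reduction handles the edge case. Equality or near-equality in the girth-$5$ Moore bound forces $L_A$ to be nearly regular of diameter $2$, and that rigid structure contradicts the half-size bound $x_c\le\lfloor (m+1)/2\rfloor$ together with tiling.

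That leaves finitely many types $(m,n)$ with $\min(m,n)$ small. These are pruned further by the parity constraint $mn\equiv\delta$, by Garg--Mineyev's exclusions for one size at most five, and by $|A|,|B|\ge2$. The paper reports twenty-four survivors. For each surviving type I will enumerate multiset profiles $\{(x_c,y_c)\}$ satisfying the product identity, the half-size bounds and both Fisher bounds. I will then check each profile against the pressure and dual Fisher inequalities, with $e_A,e_B$ bounded by the extremal numbers $\mathrm{ex}(m,\{C_3,C_4\})$. If any profile survives, the next step is a local search for an actual tiling of the board. I expect the numerical filters alone to kill everything.

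The main obstacle is the unbounded step. The pressure and Fisher inequalities have mismatched orders of magnitude, roughly $m^{3/2}$ against $mn/r$, and closing the gap uniformly in the aspect ratio $n/m$ needs the weighted dual Fisher inequality to be tight. The hardest sub-case will be very unbalanced types, where one horizontal graph is sparse. There I would first apply the pressure inequality on the smaller side, then feed the resulting lower bound on $x_c$ into Fisher on the larger side. The consequences follow directly: $\girth(L_1)=6$ implies $\girth(L_{AB})\le4$, and with Theorem~C this gives $p^\ast=5$.
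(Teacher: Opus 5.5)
There is a genuine gap in the unbounded regime, which is the heart of Theorem~D. Your mechanism there is an unweighted count: pressure gives a lower bound on degrees, Jensen feeds it into $\sum_v\binom{d_v}2$, and this is played against $\binom r2$, with girth five supposedly improving the right side. This cannot close. Girth five of $L_A$ does not constrain how signed colors meet; it enters only through the Moore bound $e_A\le\frac m2\sqrt{m-1}$. By your own girth-five count the degree side is at most $\binom m2+\binom n2$, so you would need $r\lesssim\sqrt2\,m$. Fisher gives no such bound: colors with $x_c=1$ or $y_c=1$ contribute nothing to $\sum_cx_c(x_c-1)$ or $\sum_cy_c(y_c-1)$. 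The only general bound is $r\le 2e_A=O(m^{3/2})$.

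What is missing is a specific weighting together with a width bound. Charge a pair $\{s,t\}$ of signed colors meeting in $A$ by $\abs{B_s}\abs{B_t}$, and a pair meeting in $B$ by $\abs{A_s}\abs{A_t}$. The tiling identity $\sum_{s\ni a}\abs{B_s}=n-\one_{a\in D_A}$ then evaluates the total exactly, as $\tfrac12mn(m+n)$ minus terms $O(Wmn)$, with no degree information. Near-disjointness bounds the same total by $\tfrac12(\sum_sw_s)^2$, where $w_s=\max(\abs{A_s},\abs{B_s})$ and $W=\max_sw_s$. The width bound $W\le(1+o(1))\sqrt m$ comes from iterating pressure:
\begin{enumerate}[label=\textup{(\arabic*)},leftmargin=2.4em,itemsep=1pt]
\item $\max_cy_c\le2(2e_A+\delta)/(m-1)=O(\sqrt m)$, and the product identity then gives $n=O(m)$;
\item dual pressure then gives $x_c=O(\sqrt n)=o(m)$;
\item pressure again, now with denominator $m-x_c\sim m$, gives $y_c\le(1+o(1))\sqrt m$ and $e_A,e_B\sim\tfrac12m^{3/2}$.
\end{enumerate}
Hence $\sum_sw_s=2e_A+2e_B-\sum_s\min(\abs{A_s},\abs{B_s})\le2e_A+2e_B-(mn-\delta)/W\le(1+o(1))m^{3/2}$. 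The upper bound is therefore $(\tfrac12+o(1))m^3$, while the exact value is $(1+o(1))m^3$. Your fallback has no mechanism: you suggest near-Moore graphs are nearly regular of diameter two and that this contradicts half-size plus tiling. But in that regime $x_c\lesssim\sqrt m$, nowhere near $\lfloor(m+1)/2\rfloor$.

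The passage to a finite list has further problems.
\begin{itemize}[leftmargin=1.5em,itemsep=2pt]
\item You need an explicit threshold, not just $\min\{m,n\}\le M_0$. The paper gets one from an effective version of the argument above: $W\le\sqrt m+5$ for $m\ge289$, polynomial positivity of the resulting margin, and an exact corner scan for $16\le m\le288$. Your plan supplies nothing of this kind.
\item You cannot prune with Garg--Mineyev's exclusions for a support size at most five. Those concern $\mathsf T_4$ structures, and $\girth(L_{AB})=5$ with $\girth(L_1)=6$ meets none of the three branches. Indeed $(5,5,1)$ and $(5,7,1)$ are among the twenty-four types.
\item The bound $m,n\ge5$ must be proved directly. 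On at most four vertices, a girth-$\ge5$ graph with no isolated vertex is a forest with a leaf. A leaf of $L_A$ forces some $y_c\ge n-1$, which half-size permits only for $n\le3$. The cases $\min\{m,n\}\le2$ and type $(3,3)$ are excluded separately.
\item The unbalanced case you flag as hardest is immediate, by step (1) of the iteration above.
\item Your bounded filter is unweighted dual Fisher. But thirteen of the twenty-four types admit profiles passing product, Moore, Fisher, pressure and half-size, and the paper kills them only with the weighted inequality. Whether your weaker filter suffices there has to be checked; you cannot simply expect it.
\end{itemize}
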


Together, Theorems~A--D determine the middle-link girth of every orientable no-fold product
structure in the standing even/odd taiko sense and pin the horizontal frontier at
$\halfgirth(L_1)=3$ exactly: the realizable pairs
$(\girth(L_{AB}),\halfgirth(L_1))$ at half-girth three are $(3,3)$ and $(4,3)$, and nothing
higher.

\subsection*{Two structural remarks}
The Moore bound we use is the Alon--Hoory--Linial bound \cite{AlonHooryLinial} for irregular
graphs of girth six, and it is attained at the incidence graphs of projective planes: a plane
of order $q$ has Levi graph of degree $q+1$ on $2(q^2+q+1)$ vertices, meeting the bound with
equality. Theorem~B may thus be read as saying that the taiko incidence geometry cannot reach
the projective-plane Moore extremal regime: in the $(6,3)$ branch the pressure inequalities push
the horizontal graphs toward the equality case of the Moore bound, typified by projective-plane
Levi graphs, but the product identity prevents the structure from sitting at that extreme. The
collapsed regime of Theorem~A is the corresponding degeneracy in which every line carries at
most one point of each class.

Second, the pressure inequalities (vi) are a logical necessity, not a convenience. We show in
Remark~\ref{rem:necessary} that the product identity (iv) together with the half-size bound
(v) and the Moore bounds $e_A\le E_m$, $e_B\le E_n$ is, by itself, simultaneously satisfiable;
indeed a single Cauchy--Schwarz estimate is exactly tight on the diagonal. What forbids the
construction is the finer information in (vi): no color can be heavy on both sides at once.

\subsection*{Scope}
The group-ring consequence should be read exactly, and no more broadly. The combinatorial
obstruction is coefficient-independent at the level of the taiko data: no coefficient field appears
in the statement of the dichotomy or of Theorem~\ref{thm:main}, so the same orientable no-fold
$\mathsf T_4$ product-structure data are impossible independently of coefficients. Where the
Garg--Mineyev assembly applies, this closes that taiko route; it does not rule out
coefficient-sensitive constructions of a different shape, and ``$\F_2$'' marks where Mineyev's
particular size-two-cell construction is currently applied (Remark~\ref{rem:rings}). Orientability
is essential: it is what makes the signed colors at the row-vertex and the column-vertex of a
covered pair overlap (Lemma~\ref{lem:cross}), hence what produces the rectangle partition. The odd
case is the more striking half: unit counterexamples are now known over $\F_2$, in every
prime characteristic, and in characteristic zero \cite{GardamUnit,Murray,GardamComplex}, yet no
such example arises from this route, since no odd taiko attains $\mathsf T_4$. Finally, the
dichotomy and the affine construction locate exactly where the method's
reach ends: the obstruction closes the pair $(6,3)$, Theorem~\ref{thm:affine} realizes $(4,3)$, and
Theorem~\ref{thm:p5} closes the intermediate $(5,3)$ (Remark~\ref{rem:frontier}), so the precise
girth frontier at $\halfgirth(L_1)=3$ is exactly $p^\ast=5$.

\subsection*{Organization}
Section~\ref{sec:taiko} recalls product structures and signed colors and records the
oriented-simple property underlying all degree counts. Section~\ref{sec:structure} develops
the incidence picture and proves the Structure Theorem. Section~\ref{sec:small} disposes of
small supports, in particular forcing $\abs A,\abs B\ge 6$ in the $(6,3)$ branch.
Sections~\ref{sec:A} and~\ref{sec:B} prove Theorems~A and~B. Section~\ref{sec:synthesis}
assembles the main results and records the sharpness and necessity statements.
Section~\ref{sec:affine} gives the affine-plane construction, realizing $(4,3)$, and then closes
the frontier: the near-Moore reduction and weighted dual Fisher inequality exclude unbounded
$(5,3)$ families, and an effective form with a finite profile certificate excludes the bounded
remainder, yielding $p^\ast=5$.

\section{Product structures and signed colors}\label{sec:taiko}

We recall the data, following \cite{GargMineyev}, in the form we use; for the passage to group
rings see \cite{MineyevOrigami,GargMineyev}. All references to Garg--Mineyev definitions and
Theorem~16 below are to the version \cite{GargMineyev}, namely arXiv:2501.07646v2.

The dictionary with the Garg--Mineyev notation is made explicit in the following table. In the
displayed formulas below we use $\mathsf T_i$ only as a label for the corresponding taiko condition,
not as additional structure beyond these definitions.
\begin{center}
\small
\begin{tabular}{@{}lll@{}}
Garg--Mineyev datum & notation here & role in this paper \\
\hline
support sizes & $(m,n)$ & $|A|=m$, $|B|=n$ \\
product structure & $\Pi=(A,B,P)$ & finite taiko data \\
$\mathsf T_1$ & orientability & cell-compatible orientations \\
$\mathsf T_2$ & no-fold & signed-color degree control \\
horizontal links & $L_A,L_B$ & collapsed colored graphs \\
middle link & $L_1$ & signed-color incidence graph \\
$\mathsf T_4$ & triple-girth alternatives & $(6,3),(4,4),(3,6)$ regimes
\end{tabular}
\end{center}

\begin{definition}
A \emph{product structure} of type $(m,n)$ is a triple $\Pi=(A,B,P)$ with $\abs A=m$,
$\abs B=n$, and $P$ a partition of the board $A\times B$ into \emph{cells}, satisfying the
disjoint-vertex condition: if two distinct positions $(a,b),(a',b')$ lie in the same cell,
then $a\ne a'$ and $b\ne b'$. The structure is \emph{even} if every cell has size $2$, and
\emph{odd} if $P$ consists of $2$-cells and exactly one $1$-cell, denoted $(a_0,b_0)$. Write
$\defect$ for the set of defect positions ($\defect=\varnothing$, $\delta:=0$, even;
$\defect=\{(a_0,b_0)\}$, $\delta:=1$, odd), so $mn\equiv\delta\pmod 2$. We write
$D_A\subseteq A$ and $D_B\subseteq B$ for the projections of $\defect$; thus $D_A=D_B=
\varnothing$ in the even case and $D_A=\{a_0\}$, $D_B=\{b_0\}$ in the odd case.
\end{definition}

Throughout the paper we assume $\abs A=m\ge 2$ and $\abs B=n\ge 2$; the boundary cases $m=1$ or
$n=1$ are degenerate for the taiko data, since then no cell can satisfy the disjoint-vertex
condition with a horizontal edge and $L_{AB}$ has no edges at all. We exclude this boundary
throughout: every statement below about ``an orientable no-fold product structure'' carries this
standing hypothesis. \emph{Moreover, unless stated otherwise every product structure is assumed even
or odd}: thus $\delta\in\{0,1\}$, every non-defect position lies in a $2$-cell, there is at most one
defect, and $mn\equiv\delta\pmod 2$. This is exactly the class produced by the taiko assembly of
\cite{GargMineyev}---the even case for a putative zero divisor, the odd case for a putative
unit---and the parity relation, the product identity, and the Fisher and pressure relations used
below are statements about this class. We call this the \emph{standing taiko sense}. Accordingly,
later theorem statements may either rely on this standing convention or repeat ``even or odd'' for
emphasis and standalone readability. The phrase ``orientable no-fold product structure'' is also
orientation-relative in the sense made precise in Definition~\ref{def:oriented-no-fold}: it denotes
a product structure together with, or admitting, a coherent orientation for which the no-fold
condition holds.

A $2$-cell $\{(a,b),(a',b')\}$ induces a horizontal-edge \emph{occurrence}
$\{a,a'\}$ on the $A$-side and a horizontal-edge occurrence $\{b,b'\}$ on the $B$-side. The
occurrence language is used only to state explicitly the quotient convention for the horizontal
links. Colors are the connected components of the occurrence graph whose vertices are all
horizontal-edge occurrences and whose edges are generated by
\begin{enumerate}[label=\textup{(\alph*)},leftmargin=2.6em,itemsep=1pt]
\item the two horizontal occurrences belonging to the same $2$-cell;
\item two occurrences, on the same side, inducing the same unordered pair of vertices.
\end{enumerate}
The horizontal links used in the taiko conditions are the Garg--Mineyev quotient links. In their
notation,
\[
\begin{aligned}
\bar E_A&=\bigl\{\{a,a'\}:\exists b,b'\text{ with }\{(a,b),(a',b')\}\in P\bigr\},\\
\bar E_B&=\bigl\{\{b,b'\}:\exists a,a'\text{ with }\{(a,b),(a',b')\}\in P\bigr\},
\end{aligned}
\]
so repeated unordered endpoint pairs are already identified, and $\bar E_{AB}=\bar E_A\sqcup
\bar E_B$. The directed set $E_{AB}$ is the directed double cover of this collapsed set. Their color map is
constant on the equivalence classes generated by cell pairings after this quotient, and their
orientation is a section $\bar E_{AB}\to E_{AB}$ satisfying the cell-compatibility rule below
\cite[Definitions~5--6, \S\S2.4, 2.6]{GargMineyev}. Thus all horizontal occurrences with the same
unordered endpoint pair are first identified as a single simple horizontal edge. The horizontal
graphs $L_A,L_B$ are the simple graphs obtained after this collapse, and a collapsed edge inherits
one component color. Once a coherent orientation is fixed, the collapsed edge also has a single
chosen direction. The no-fold condition is evaluated on these collapsed simple edges, and the
middle link is formed from their directed colors as in \cite[Definition~11]{GargMineyev}.

This quotient convention is the convention by which the term \emph{product structure} is used
below, unless an occurrence-level statement is explicitly being made. We write
$L_{AB}=L_A\sqcup L_B$. Different cells may induce the same simple graph edge; all quantities
$x_c,y_c,e_A,e_B$ below count distinct collapsed graph edges, not raw cell occurrences. From this
point on, unless ``occurrence'' is explicitly written, an edge of a horizontal graph means a
collapsed simple horizontal edge. This collapsed-edge interpretation is not an additional
hypothesis on $\Pi$; it is the imported Garg--Mineyev definition of the horizontal links, colors,
orientations, signed colors, middle link, and no-fold condition. Equivalently, repeated
cell-induced occurrences may exist on the board, but $\mathsf T_2$, $\mathsf T_4$, and $L_1$ see
only the single collapsed colored edge with its coherent direction. Girth of these directed graphs
means girth of the underlying undirected graph \cite[Remark~15]{GargMineyev}. For every graph in
this paper, the girth of a disconnected graph is the minimum of the girths of its components, and
an acyclic graph has girth $\infty$.

\begin{definition}[orientability and no-fold]\label{def:oriented-no-fold}
An orientation $O$ is a choice of one direction for each collapsed simple horizontal edge. A
\emph{coherent orientation} is such a choice satisfying the cell-compatibility rule: in every
$2$-cell $\{(a,b),(a',b')\}$, the $A$-edge occurrence and the $B$-edge occurrence are oriented
compatibly, either $a\to a'$ together with $b\to b'$, or $a'\to a$ together with $b'\to b$.
The product structure $\Pi$ is \emph{orientable} ($\mathsf T_1$) if it admits a coherent
orientation.

The no-fold condition is a property of an oriented product structure. An oriented product
structure $(\Pi,O)$ is \emph{no-fold} ($\mathsf T_2$) if no two distinct horizontal graph edges
incident to a common vertex share both color and direction with respect to $O$; equivalently, no
vertex has two outgoing edges of one color or two incoming edges of one color \cite[(T2)]{GargMineyev}.
We say that $\Pi$ is an \emph{orientable no-fold product structure} when it admits a coherent
orientation $O$ for which $(\Pi,O)$ is no-fold. In every theorem and proof below involving such a
structure, one such orientation is fixed and all signed-color data are formed from it.
\end{definition}

\begin{definition}[middle link and signed colors]
After a coherent orientation $O$ has been fixed, for each color $c$ there are two \emph{signed
colors} $(c,\out)$ and $(c,\inn)$. The \emph{middle link} $L_1$ is the bipartite graph with parts
$A\sqcup B$ and the set of signed
colors, where a directed horizontal edge $x\to y$ of color $c$ contributes the incidences
$\{x,(c,\out)\}$ and $\{y,(c,\inn)\}$ \cite[Definition 11]{GargMineyev}. For $v\in A\sqcup B$
write $\Sigma(v)$ for the set of signed colors incident to $v$ in $L_1$, and for a signed
color $s$ set $A_s=\{a\in A:s\in\Sigma(a)\}$, $B_s=\{b\in B:s\in\Sigma(b)\}$. The graph
$L_1$ is a simple bipartite incidence graph: repeated contributions to the same pair $(v,s)$
produce one incidence. We also write $s\ni v$ as shorthand for $s\in\Sigma(v)$.
\end{definition}

\begin{lemma}[collapsed middle link and endpoint incidences]\label{lem:collapsed-middle}
Fix a coherent orientation $O$. In the Garg--Mineyev quotient links, all cell-induced horizontal
occurrences with the same unordered endpoint pair represent one collapsed horizontal edge. This
collapsed edge has one color and one chosen direction. The middle link used here, and the middle
link entering $\mathsf T_4$, is the simple bipartite incidence graph obtained from these oriented
collapsed colored edges: a directed collapsed edge $x\to y$ of color $c$ contributes the single
incidences $x-(c,\out)$ and $y-(c,\inn)$, and repeated contributions to the same pair $(v,s)$ are
identified. If $(\Pi,O)$ is no-fold, then for every vertex $v$ the map from incident collapsed
horizontal edges at $v$ to signed colors in $\Sigma(v)$ is injective and hence is a bijection onto
$\Sigma(v)$.
\end{lemma}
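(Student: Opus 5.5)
The first assertions of Lemma~\ref{lem:collapsed-middle} are definitional, and I would verify them in that order before turning to the injectivity claim, which is the only part with content.

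\emph{Step 1: collapsed edges have one color and one direction.} By the construction of colors, any two horizontal occurrences on the same side with the same unordered endpoint pair are joined by a generator of type~(b). Hence they lie in one connected component of the occurrence graph, and so carry one color. Since the quotient $\bar E_{AB}$ identifies them, and an orientation $O$ is by definition a section $\bar E_{AB}\to E_{AB}$, the collapsed edge receives exactly one chosen direction. Consequently each directed collapsed edge $x\to y$ of color $c$ determines the two incidences $x-(c,\out)$ and $y-(c,\inn)$ unambiguously. Repeated cell occurrences map to the same collapsed edge, so they contribute the same pairs. Identifying repeated pairs gives a simple bipartite graph, and this is verbatim \cite[Definition~11]{GargMineyev} applied to the quotient links. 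So the middle link here coincides with the one entering $\mathsf T_4$.

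\emph{Step 2: the map and its surjectivity.} Fix a vertex $v$. Define $\phi_v$ on the collapsed horizontal edges incident to $v$ as follows. An edge $e$ of color $c$ goes to $(c,\out)$ if $e$ is directed away from $v$, and to $(c,\inn)$ if it is directed into $v$. By the definition of $L_1$, an incidence $\{v,s\}$ exists precisely when some directed collapsed edge at $v$ produces $s$ in this way. Hence $\phi_v$ is well defined, takes values in $\Sigma(v)$, and is surjective onto $\Sigma(v)$.

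\emph{Step 3: injectivity from no-fold.} Suppose $e\ne e'$ are distinct collapsed edges at $v$ with $\phi_v(e)=\phi_v(e')=(c,\out)$. Then $e$ and $e'$ share color $c$, and both are outgoing at $v$, which is exactly what $\mathsf T_2$ forbids; the $\inn$ case is identical. The only subtlety is that $\mathsf T_2$ must be read on collapsed edges, not raw occurrences. This is the reading fixed in Definition~\ref{def:oriented-no-fold}, and it is essential: two occurrences of the same pair collapse to a single edge, so they never count as two distinct edges. Injectivity together with surjectivity then gives a bijection.

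The main obstacle, mild as it is, lies in Step~1. One must check that coloring and orientation are constant on collapsed classes, so that the collapse is consistent. Otherwise a collapsed edge could emit two signed colors and the map would fail to be well defined. I would handle this with the type-(b) generator for colors and the section property for directions, as above.
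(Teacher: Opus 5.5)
Your proposal is correct and follows essentially the same route as the paper: the first assertions are read off from the quotient-link convention and Garg--Mineyev's Definition~11, surjectivity onto $\Sigma(v)$ holds by definition, and injectivity is exactly the no-fold condition imposed on collapsed edges. Your explicit appeal to the type-(b) generators (for a single color) and to the section property of $O$ (for a single direction) simply spells out what the paper treats as part of the imported convention.
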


\begin{proof}
The first assertions are precisely the quotient-link convention recalled above and the
Garg--Mineyev construction of the middle link from directed colored horizontal edges
\cite[Definitions~5--6, \S\S2.4, 2.6, Definition~11]{GargMineyev}. The graph convention is simple:
a repeated contribution to the same incidence pair is one edge of the incidence graph. At a fixed
vertex $v$, each incident collapsed horizontal edge contributes exactly one signed color according
to whether $v$ is its tail or its head. The no-fold condition forbids two distinct incident
collapsed edges of the same color and the same direction at $v$, hence forbids two such edges from
contributing the same signed color. This gives injectivity, and surjectivity onto $\Sigma(v)$ is by
definition of $\Sigma(v)$.
\end{proof}

\begin{lemma}[Garg--Mineyev quotient dictionary]\label{lem:gm-dictionary}
For the product structures considered in this paper, the objects just defined are the
Garg--Mineyev objects used in the taiko conditions: orientability is $\mathsf T_1$, no-fold is
$\mathsf T_2$, the graphs $L_A,L_B$ are the horizontal links, $L_1$ is the middle link, and
$\mathsf T_4$ is the triple-girth alternative stated in the introduction. More explicitly, the
paper's occurrence graph is a way to compute the Garg--Mineyev quotient color map: repeated
cell-induced horizontal occurrences with the same unordered endpoint pair are identified in
$\bar E_A\sqcup\bar E_B$, colors are equivalence classes of these collapsed horizontal edges, a
coherent orientation is a single direction on each collapsed colored edge compatible with every
occurrence in every cell, and $L_1$ is formed from the signed colors of these oriented collapsed
edges. Consequently, by Lemma~\ref{lem:collapsed-middle}, all signed-color incidences and all degrees
in the sequel are incidences and degrees of collapsed simple horizontal edges. In particular, if $(\Pi,O)$ is no-fold, then
$|\Sigma(v)|=\deg(v)$ means degree in the collapsed horizontal link, not multiplicity of
cell-induced occurrences.
\end{lemma}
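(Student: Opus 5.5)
The statement immediately above is Lemma~\ref{lem:gm-dictionary}, a dictionary lemma. Its proof is a matter of unwinding definitions rather than an argument with real content. I would prove it by checking, one object at a time, that each notion defined in this section coincides with the corresponding Garg--Mineyev notion in \cite[Definitions~5--6, \S\S2.4, 2.6, Definition~11, (T2), Remark~15]{GargMineyev}. The final sentence about degrees is then an immediate consequence of Lemma~\ref{lem:collapsed-middle}.

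\textbf{Colors.} The paper's occurrence graph has two kinds of edges: (a) edges linking the two occurrences in a common $2$-cell, and (b) edges linking same-side occurrences with the same unordered endpoint pair. Taking the quotient by the edges of type (b) first identifies all occurrences with a given endpoint pair. The result is exactly $\bar E_A\sqcup\bar E_B$. The edges of type (a) then descend to the relation ``paired by some cell'' on collapsed edges. So the connected components of the occurrence graph correspond bijectively to the equivalence classes on $\bar E_{AB}$ generated by cell pairings, which is the Garg--Mineyev color map.

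\textbf{Orientations and no-fold.} A coherent orientation in Definition~\ref{def:oriented-no-fold} assigns one direction to each collapsed edge. It must agree with the cell-compatibility rule on every occurrence, and this is precisely a section $\bar E_{AB}\to E_{AB}$ compatible with cells. Hence orientability is $\mathsf T_1$. The no-fold condition is stated on collapsed edges at a vertex, by color and direction, word for word as (T2). Hence no-fold is $\mathsf T_2$.

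\textbf{Links, middle link, and $\mathsf T_4$.} The graphs $L_A$ and $L_B$ are the simple graphs on $\bar E_A$ and $\bar E_B$, so they are the horizontal links. The middle link $L_1$ is built from oriented colored collapsed edges with repeated incidences identified, as in Definition~11; this is recorded in Lemma~\ref{lem:collapsed-middle}. The girths entering $\mathsf T_4$ are taken on underlying undirected graphs, which matches Remark~15. So $\mathsf T_4$ is the triple-girth alternative stated in the introduction.

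\textbf{Degrees.} The final claim, that $|\Sigma(v)|=\deg(v)$ is a collapsed degree, follows from the bijection in Lemma~\ref{lem:collapsed-middle} under no-fold. The only point needing care is that the quotient by endpoint pairs commutes with generating the equivalence relation from cell pairings. I would verify this by observing that the component structure of a graph is unchanged by first contracting some of its edges, namely those of type (b). There is no genuine obstacle beyond bookkeeping.
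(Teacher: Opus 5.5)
Your proposal is correct and follows essentially the same route as the paper: both proofs unwind the Garg--Mineyev definitions object by object (the quotient $\bar E_A\sqcup\bar E_B$, colors generated by cell pairings, orientations as cell-compatible sections, (T2) on collapsed edges, Definition~11 for $L_1$) and read the degree identification off the no-fold injectivity of Lemma~\ref{lem:collapsed-middle}. Your remark that contracting the type-(b) edges does not change connected components is a correct, slightly more explicit version of the paper's one-line claim that the occurrence graph plus the repeated-endpoint collapse encodes exactly the Garg--Mineyev operations.
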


\begin{proof}
Garg--Mineyev first pass from cell-induced horizontal occurrences to the unordered collapsed
horizontal edge sets $\bar E_A$ and $\bar E_B$; their directed set $E_{AB}$ consists of the two
orientations of each collapsed edge, and their orientations are sections of this directed double
cover satisfying the same cell-compatibility rule. Their color relation is generated by the two
horizontal edges of a cell, after this quotient, and their middle link is formed from directed
colored edges by adding the incidences tail--$(c,\out)$ and head--$(c,\inn)$
\cite[Definitions~5--6, \S\S2.4, 2.6, Definition~11]{GargMineyev}. These are exactly the
operations encoded above by the occurrence graph plus the repeated-endpoint collapse. The no-fold
condition \cite[(T2)]{GargMineyev} is therefore imposed on incident collapsed edges of a common
vertex. Under no-fold, an incident collapsed edge contributes exactly one signed color at its
endpoint and two distinct incident collapsed edges cannot contribute the same signed color there.
This gives the stated degree identification.
\end{proof}

\begin{definition}[patterns and triple-girth]
A \emph{pattern} at a vertex of $A\sqcup B$ is an unordered pair of distinct signed colors incident
to that vertex. The Garg--Mineyev condition $\mathsf T_3$ is the no-repeated-pattern condition: the
same unordered pair of signed colors is not realized at two different vertices. Under the simple
middle-link convention of Lemma~\ref{lem:collapsed-middle}, and in the no-fold setting used below,
a repeated pattern is equivalent to a $4$-cycle in $L_1$. Thus throughout the nonexistence argument
$\mathsf T_3$ is automatic whenever $\girth(L_1)\ge 6$.

For any graph $G$ set $\halfgirth(G)=\girth(G)/2$, with $\infty/2=\infty$. Thus
$\halfgirth(G)\ge q$ is equivalent to $\girth(G)\ge2q$.

The triple-girth condition $\mathsf T_4$ is the disjunction of the three regimes
\[
        (p,q)\in\{(6,3),(4,4),(3,6)\},\qquad
        \girth(L_{AB})\ge p,\quad \halfgirth(L_1)\ge q,
\]
Equivalently, the three branches demand
$(\girth(L_{AB}),\girth(L_1))\ge(6,6)$, $(4,8)$, or $(3,12)$, respectively.
\end{definition}

\paragraph{External assembly input.}
We use Garg--Mineyev's Theorem~16 \cite[Theorem~16]{GargMineyev} (equivalently Theorem~27 of
Mineyev's origami paper \cite{MineyevOrigami}) as a black box. In the notation of the present
paper, the inputs relevant to the $\F_2$ assembly are:
\begin{enumerate}[label=\textup{(\alph*)},leftmargin=2.6em,itemsep=1pt]
\item an even or odd product structure with $m,n\ge2$;
\item orientability $\mathsf T_1$ and no-fold $\mathsf T_2$, interpreted on the collapsed
horizontal links as in Lemma~\ref{lem:gm-dictionary};
\item the triple-girth alternative $\mathsf T_4$, namely the disjunction of the
alternatives $(6,3)$, $(4,4)$, and $(3,6)$ defined above.
\end{enumerate}
This is Garg--Mineyev's shorthand
$\girth(6,3)\allowbreak(4,4)\allowbreak(3,6)$ for the same triple-girth condition. The exact part
of Garg--Mineyev Theorem~16 used here is the equivalence, in their notation, between the
orientation-plus-triple-girth formulation $(2)$ and the orientation/no-fold/triple-girth
formulation $(2')$. Thus every assembly input in that theorem whose hypotheses are stated without no-fold is,
by the theorem itself, equivalent to one satisfying the no-fold formulation obstructed here. The
graphs in these formulations are the collapsed horizontal and middle links identified in
Lemmas~\ref{lem:collapsed-middle} and~\ref{lem:gm-dictionary}. Under
these equivalent hypotheses the external theorem proves nondegeneracy and torsion-freeness of the
associated universal groups; in the even case the assembled elements give a zero-divisor
counterexample over $\F_2$, and in the odd case they give a nontrivial-unit counterexample over
$\F_2$. If the same structure also admits the Garg--Mineyev signature required in that theorem,
the corresponding counterexample is obtained over any ring with unity; this signature requirement
is an additional hypothesis only for the arbitrary-ring upgrade. We use this assembly theorem
only as an external input for Corollary~\ref{cor:route}; the proof below is the combinatorial
obstruction showing that the required orientable no-fold triple-girth input structure does not
exist. The internal definition of signature is not used anywhere in the obstruction.

Thus $A_{(c,\out)}$ is the set of tails of color-$c$ $A$-edges and $A_{(c,\inn)}$ the set of
heads; we write $x_c$ for the number of color-$c$ edges of $L_A$ and $y_c$ for that of $L_B$,
and $e_A=\sum_c x_c$, $e_B=\sum_c y_c$ for the edge counts. All degrees $\deg_{L_A}$ and $\deg_{L_B}$ are total degrees in the underlying undirected collapsed simple horizontal graphs. A color is \emph{active} if it occurs
in a $2$-cell; for an active color both $x_c$ and $y_c$ are positive, and all sums over colors
below are over active colors.

\begin{convention}[fixed orientation]
Throughout a proof, an orientable no-fold product structure is considered together with one fixed
coherent orientation $O$ for which $(\Pi,O)$ is no-fold. The signed colors, the middle link, and the
sets $A_s,B_s$ are formed from this fixed choice. Thus a hypothetical structure satisfying
$\mathsf T_1,\mathsf T_2,\mathsf T_4$ is ruled out by applying the arguments below to the coherent
orientation appearing in that hypothesis; no comparison among different coherent orientations is
needed. Reversing all edges of a color only renames $(c,\out)$ and $(c,\inn)$ and leaves every
cardinality used below unchanged.
\end{convention}

The next lemma records the elementary consequences of the quotient dictionary that will be used
throughout. It is a statement about the collapsed horizontal graphs, not about cell occurrences: a
simple graph edge may be induced by several cells, but after collapse it has a unique color and,
once a coherent orientation is fixed, a unique direction. This oriented-simple compatibility is not
a consequence of the bare board partition alone; it is part of the Garg--Mineyev horizontal-link
quotient recalled in Lemma~\ref{lem:gm-dictionary}.

\begin{lemma}[oriented-simple]\label{lem:simple}
For an orientable product structure, after repeated horizontal-edge occurrences are identified as
in Lemma~\ref{lem:gm-dictionary}, the horizontal graphs $L_A,L_B$ are simple edge-colored graphs
and each simple edge has a single direction; in particular $\girth(L_{AB})\ge 3$, and
$e_A\le\binom m2$, $e_B\le\binom n2$. If moreover $\Pi$ is no-fold, then for every $a\in A$ and
every $b\in B$,
\[
        \abs{\Sigma(a)}=\deg_{L_A}(a)\le m-1,\qquad
        \abs{\Sigma(b)}=\deg_{L_B}(b)\le n-1.
\]
\end{lemma}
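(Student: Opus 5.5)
The plan is to read everything off the quotient convention of Lemma~\ref{lem:gm-dictionary} and the bijection of Lemma~\ref{lem:collapsed-middle}. No new combinatorics should be needed; the content is that the objects are simple and have one direction per edge by construction.

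\emph{Step 1: simplicity.} By Lemma~\ref{lem:gm-dictionary}, $L_A$ and $L_B$ have edge sets $\bar E_A$ and $\bar E_B$, which are sets of unordered pairs. So there are no multiple edges. There are also no loops: by the disjoint-vertex condition, a $2$-cell $\{(a,b),(a',b')\}$ has $a\ne a'$ and $b\ne b'$, so every induced pair has two distinct endpoints. Each collapsed edge lies in exactly one color class, since colors are equivalence classes of collapsed edges.

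\emph{Step 2: one direction per edge.} A coherent orientation is by definition a section $\bar E_{AB}\to E_{AB}$, so it picks a single direction for each collapsed edge. This is the point where orientability is used. Once one edge has one direction, the cell-compatibility rule is imposed occurrence by occurrence, but it cannot produce two directions on the same simple edge.

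\emph{Step 3: the stated bounds.} A simple loopless graph has girth at least $3$ (or $\infty$ if acyclic), which gives $\girth(L_{AB})\ge3$ by the min-over-components convention. The edge counts satisfy $e_A=\abs{\bar E_A}\le\binom m2$ and $e_B\le\binom n2$.

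\emph{Step 4: the degree identity under no-fold.} Lemma~\ref{lem:collapsed-middle} gives a bijection from the collapsed edges incident to $v$ onto $\Sigma(v)$. Hence $\abs{\Sigma(a)}=\deg_{L_A}(a)$, and simplicity bounds this degree by the number of other vertices, $m-1$. The same argument on the $B$ side gives $\abs{\Sigma(b)}=\deg_{L_B}(b)\le n-1$.

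The only delicate point is the second half of Step~2: it must be made explicit that direction is a property of the collapsed edge, not of each occurrence. That is exactly the imported quotient convention, so I would simply cite it rather than derive it from the bare board partition.
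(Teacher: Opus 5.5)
Your proposal is correct and follows the paper's proof essentially step for step. Simplicity and well-defined colors come from the quotient convention, the single direction per edge is the definition of an orientation as a function on collapsed edges, and the degree identity is the no-fold bijection between incident collapsed edges and $\Sigma(v)$; the only cosmetic differences are that you cite Lemma~\ref{lem:collapsed-middle} where the paper re-derives the bijection inline, and that you rule out loops explicitly via the disjoint-vertex condition, which the paper leaves implicit.
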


\begin{proof}
By the occurrence quotient defined above, $L_A$ and $L_B$ are the underlying simple graphs obtained
by identifying all occurrences with the same unordered pair. The color of a collapsed edge is also
well-defined by construction, because occurrences of the same unordered pair are generators of the
same color component. By Lemma~\ref{lem:gm-dictionary}, the orientation is a function on these collapsed colored
edges: every occurrence of a collapsed edge is read with the same direction. Thus a vertex has at
most $m-1$ horizontal neighbors and $e_A\le\binom m2$. A signed color $(c,\out)\in\Sigma(a)$ records an outgoing
color-$c$ simple edge at $a$, and $(c,\inn)$ an incoming one; by no-fold---the condition
$\mathsf T_2$ imposed on collapsed simple horizontal edges---there is at most one incident simple
edge of each signed color at $a$. Conversely, every incident simple edge contributes
exactly one signed color at $a$. Hence signed colors at $a$ are in bijection with incident simple
edges, so $\abs{\Sigma(a)}=\deg_{L_A}(a)$. The $B$-side identity is identical.
\end{proof}

\begin{lemma}[same signed orientation, distinct tails]
\label{lem:distincttails}
Fix one side, $A$ or $B$, and a color $c$. Two horizontal-edge occurrences of color $c$ with the
same signed orientation---that is, both contribute $(c,\out)$ at their tails, equivalently both are
read tail-to-head in the coherent orientation---and with distinct tails induce distinct collapsed
simple graph edges; the same holds with ``tails'' and $(c,\out)$ replaced by ``heads'' and
$(c,\inn)$.
\end{lemma}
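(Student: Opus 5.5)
We argue by contradiction, working entirely on the collapsed horizontal graph of the chosen side, say $A$; the $B$-side is identical. Take two occurrences of color $c$, each read tail-to-head in the fixed coherent orientation $O$, so that each contributes $(c,\out)$ at its tail. Suppose their tails $a\ne a'$ differ, but the two occurrences induce the same collapsed simple edge. By the quotient convention (Lemma~\ref{lem:gm-dictionary}), they then have the same unordered endpoint pair $\{u,w\}$.

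First we use Lemma~\ref{lem:simple}. It says that a collapsed edge carries a single direction under $O$, and that every occurrence of that collapsed edge is read with this same direction. So the collapsed edge on $\{u,w\}$ has a unique tail, say $u\to w$. Each of the two occurrences, being read tail-to-head consistently with $O$, therefore has tail $u$. This forces $a=u=a'$, which contradicts $a\ne a'$. Hence the two occurrences induce distinct collapsed simple edges.

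The head version follows by the same argument with tails replaced by heads and $(c,\out)$ by $(c,\inn)$. The unique direction of the collapsed edge determines its head uniquely, so two occurrences with distinct heads cannot share a collapsed edge.

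The only delicate point is interpretive rather than computational. We must make sure that ``tail of an occurrence'' means its tail under the direction that $O$ assigns to the collapsed edge. It must not mean some independent direction read off from the cell, since an orientation is by definition a single direction per collapsed edge, compatible with every occurrence. I would state this explicitly by citing Definition~\ref{def:oriented-no-fold} and Lemma~\ref{lem:simple}. Once that is fixed, the argument is just the well-definedness of the direction map on collapsed edges. No no-fold hypothesis is needed, although no-fold would also give it at the level of signed colors via Lemma~\ref{lem:collapsed-middle}.
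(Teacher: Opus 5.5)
Your proof is correct and is essentially the paper's argument: both reduce to the fact, recorded in Lemma~\ref{lem:simple}, that a collapsed edge carries a single direction under the coherent orientation, so two occurrences of the same unordered pair with distinct tails would have to be read in opposite directions. Your remark that no-fold is not needed also matches the paper, whose proof uses only the orientability part of Lemma~\ref{lem:simple}.
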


\begin{proof}
If two such occurrences collapsed to the same simple graph edge, then they would have the same
underlying unordered pair. Distinct tails would force the two orientations of that unordered pair
to be opposite, contradicting Lemma~\ref{lem:simple}, which says that the collapsed edge has a
single direction in an orientable product structure. The head version is identical.
\end{proof}

Every branch of $\mathsf T_4$ has $p\ge 3$, hence $\girth(L_{AB})\ge 3$; we use
Lemma~\ref{lem:simple} without further comment.

\section{The middle link as an incidence structure}\label{sec:structure}

Throughout this section $\Pi$ is orientable and no-fold, even or odd in the standing taiko sense,
with $m,n\ge2$, and one coherent no-fold orientation is fixed. The two lemmas preceding
Theorem~\ref{thm:structure} assume $\girth(L_1)\ge 6$; the theorem itself records the equivalence
between that assumption and near-disjointness, together with the structural consequences. By
Lemma~\ref{lem:collapsed-middle}, $L_1$ is the simple bipartite incidence graph of collapsed
signed-color incidences, so the girth hypothesis enters through one observation: a $4$-cycle in
$L_1$ is a configuration $u-s-v-t-u$ with $u\ne v$ in $A\sqcup B$ and $s\ne t$ signed colors,
i.e.\ two distinct points carrying two common signed colors.

\begin{lemma}[no double incidence]\label{lem:nodouble}
If $\girth(L_1)\ge 6$ then no two distinct vertices of $A\sqcup B$ carry two common signed
colors. Equivalently, for distinct $s\ne t$, $\ \abs{A_s\cap A_t}+\abs{B_s\cap B_t}\le 1$.
\end{lemma}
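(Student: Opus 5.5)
The plan is to argue directly from the definition of $L_1$ as a simple bipartite graph, using Lemma~\ref{lem:collapsed-middle}. Suppose two distinct vertices $u\ne v$ of $A\sqcup B$ both carry two distinct signed colors $s\ne t$, so that $s,t\in\Sigma(u)\cap\Sigma(v)$. Since $L_1$ is simple, the four incidences $u$--$s$, $s$--$v$, $v$--$t$ and $t$--$u$ are four distinct edges of $L_1$. They are distinct because the endpoint pairs $\{u,s\},\{v,s\},\{v,t\},\{u,t\}$ are pairwise different, which uses $u\ne v$ and $s\ne t$. Hence $u$--$s$--$v$--$t$--$u$ is a closed walk of length $4$ visiting four distinct vertices, that is, a $4$-cycle. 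This would give $\girth(L_1)\le 4$, contradicting $\girth(L_1)\ge 6$. This proves the first assertion.

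For the equivalent numerical form, fix distinct signed colors $s\ne t$. The set of vertices carrying both $s$ and $t$ is $(A_s\cap A_t)\sqcup(B_s\cap B_t)$, because $A$ and $B$ are disjoint parts of the vertex set. Its cardinality is therefore $\abs{A_s\cap A_t}+\abs{B_s\cap B_t}$. The first assertion says precisely that this set has at most one element, which is the displayed inequality. Conversely, the inequality for all pairs $s\ne t$ says that no two distinct vertices share two common signed colors, so the two formulations are equivalent.

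There is essentially no obstacle. The only point requiring care is that $L_1$ is simple, so that coincident contributions cannot create spurious parallel edges or hide a $4$-cycle. This is exactly the convention fixed in Lemma~\ref{lem:collapsed-middle}, under which repeated contributions to the same pair $(v,s)$ give a single incidence. No-fold is not needed for this lemma.
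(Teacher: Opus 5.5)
Your proposal is correct and follows the paper's own argument: two distinct points sharing two distinct signed colors give the $4$-cycle $u$--$s$--$v$--$t$--$u$ in the simple bipartite graph $L_1$. The numerical form then follows because $\abs{A_s\cap A_t}+\abs{B_s\cap B_t}$ counts exactly the points carrying both $s$ and $t$. Your remarks that simplicity of $L_1$ is the only convention used and that no-fold is not needed are accurate.
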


\begin{proof}
Two distinct points on two common signed colors form a $4$-cycle, contradicting
$\girth(L_1)\ge 6$. The quantity $\abs{A_s\cap A_t}+\abs{B_s\cap B_t}$ counts the points
carrying both $s$ and $t$.
\end{proof}

\begin{lemma}[cross-covering]\label{lem:cross}
If $\Pi$ is orientable and no-fold with $\girth(L_1)\ge 6$, then
$\abs{\Sigma(a)\cap\Sigma(b)}=1$ for every non-defect position $(a,b)$. In the odd case
$\abs{\Sigma(a_0)\cap\Sigma(b_0)}=0$ for the defect.
\end{lemma}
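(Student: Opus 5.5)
The plan is to prove the two assertions separately: a lower bound $\abs{\Sigma(a)\cap\Sigma(b)}\ge1$ coming from the cell structure and orientability, an upper bound coming from the girth hypothesis, and then the defect statement from a global parity count.

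\emph{Non-defect positions.} Let $(a,b)$ be a non-defect position, lying in the $2$-cell $\{(a,b),(a',b')\}$. By the disjoint-vertex condition, $a\ne a'$ and $b\ne b'$. The cell induces the horizontal occurrences $\{a,a'\}$ and $\{b,b'\}$. These lie in the same occurrence-graph component by generator (a), so their collapsed edges carry one common color $c$.

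By the cell-compatibility rule of the fixed coherent orientation, either $a\to a'$ and $b\to b'$, or $a'\to a$ and $b'\to b$. In the first case $(c,\out)\in\Sigma(a)\cap\Sigma(b)$; in the second, $(c,\inn)\in\Sigma(a)\cap\Sigma(b)$. Here I use Lemma~\ref{lem:collapsed-middle}: the collapsed edge has the single direction inherited from every occurrence. Hence the intersection is nonempty.

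Conversely, suppose two distinct signed colors $s\ne t$ lay in $\Sigma(a)\cap\Sigma(b)$. Since $a\in A$ and $b\in B$ are distinct points, $a-s-b-t-a$ would be a $4$-cycle in $L_1$. This contradicts Lemma~\ref{lem:nodouble}. So the intersection has exactly one element.

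\emph{The defect.} Here the local argument gives nothing, because the $1$-cell induces no horizontal edge, so I would use a global double count. For each signed color $s$, every position in the rectangle $A_s\times B_s$ has $s$ as a common signed color. By the first part, each non-defect position lies in exactly one such rectangle. Therefore
\[
\sum_s\abs{A_s}\abs{B_s}=(mn-1)+\#\{s:\ s\in\Sigma(a_0)\cap\Sigma(b_0)\}.
\]
Next I show the left side is even.
\begin{itemize}
\item By no-fold (Lemma~\ref{lem:simple}), each vertex is the tail of at most one color-$c$ edge, so $\abs{A_{(c,\out)}}$ equals the number $x_c$ of color-$c$ edges of $L_A$.
\item Likewise each vertex is the head of at most one color-$c$ edge, so $\abs{A_{(c,\inn)}}=x_c$.
\item The same argument on the $B$-side gives $\abs{B_{(c,\out)}}=\abs{B_{(c,\inn)}}=y_c$.
\end{itemize}
Hence the left side equals $2\sum_c x_cy_c$, which is even. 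In the odd case $mn\equiv\delta=1\pmod2$, so $mn-1$ is even. Therefore $\#(\Sigma(a_0)\cap\Sigma(b_0))$ is even.

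It remains to show this count is at most $1$. If it were at least $2$, two common signed colors of $a_0$ and $b_0$ would again form a $4$-cycle in $L_1$. So the count is $0$ or $1$, and being even it is $0$.

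\emph{Main obstacle.} The defect case is the delicate step, since there is no local reason for $a_0$ and $b_0$ to be disjoint in $\Sigma$. The key point to verify carefully is the balance $\abs{A_{(c,\out)}}=\abs{A_{(c,\inn)}}$, which counts tails and heads of collapsed edges via Lemma~\ref{lem:distincttails} and no-fold. Combined with the parity $mn\equiv\delta$, this balance is what forces the defect to be uncovered.
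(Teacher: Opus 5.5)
Your proof is correct and follows the paper's argument essentially step for step: orientability of the $2$-cell gives a common signed color $(c,\out)$ or $(c,\inn)$, a second common signed color would be a $4$-cycle, and for the defect you compare $\sum_s\abs{A_s}\abs{B_s}=mn-1+\eps$ with the even quantity $2\sum_c x_cy_c$ coming from the no-fold balance. One small remark: the balance $\abs{A_{(c,\out)}}=\abs{A_{(c,\inn)}}=x_c$ needs only no-fold, because $x_c$ already counts collapsed edges, so Lemma~\ref{lem:distincttails} is not required there.
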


\begin{proof}
A non-defect position $(a,b)$ lies in a $2$-cell $\{(a,b),(a',b')\}$ of some color $c$. If its
$A$-edge is oriented $a\to a'$, orientability orients the $B$-edge $b\to b'$, so both $a,b$
carry $(c,\out)$; if $a'\to a$ then both carry $(c,\inn)$. Hence
$\Sigma(a)\cap\Sigma(b)\ne\varnothing$, and by Lemma~\ref{lem:nodouble} it is a singleton.

For the defect, set $\eps=\abs{\Sigma(a_0)\cap\Sigma(b_0)}\in\{0,1\}$
(Lemma~\ref{lem:nodouble}). The sum $\sum_s\abs{A_s}\abs{B_s}=\sum_{(a,b)}\abs{\Sigma(a)\cap
\Sigma(b)}$ counts, over all positions, the common signed colors; the $mn-1$ non-defect
positions contribute $1$ each, and the defect contributes $\eps$, so the sum equals
$mn-1+\eps$. On the other hand, by no-fold the $x_c$ tails and the $x_c$ heads of the color-$c$
$A$-edges are each distinct, so $\abs{A_{(c,\out)}}=\abs{A_{(c,\inn)}}=x_c$, and likewise
$\abs{B_{(c,\out)}}=\abs{B_{(c,\inn)}}=y_c$; grouping the signed colors by color, the same sum equals
$\sum_c\bigl(x_cy_c+x_cy_c\bigr)=2\sum_c x_cy_c$, which is even. Since $mn$ is odd in the odd
case, $mn-1$ is even, so $\eps$ is even, whence $\eps=0$.
\end{proof}

\begin{theorem}[Structure Theorem]\label{thm:structure}
Let $\Pi$ be an orientable no-fold product structure, even or odd in the standing taiko sense, with
$\abs A,\abs B\ge 2$. Then, with $L_1$ understood as the simple bipartite incidence graph of
Lemma~\ref{lem:collapsed-middle}, $\girth(L_1)\ge 6$ if and only if any two distinct signed colors
share at most one point. When this holds, the rectangles
$A_s\times B_s$ partition $(A\times B)\setminus\defect$, and the following conclusions hold:
\begin{enumerate}[label=\textup{(\roman*)},leftmargin=2.6em]
\item For each $a\in A$, $\sum_{s\ni a}|B_s|=n-\one_{a\in D_A}$, and for each $b\in B$,
$\sum_{s\ni b}|A_s|=m-\one_{b\in D_B}$.
\item For distinct signed colors $s\ne t$,
$|A_s\cap A_t|+|B_s\cap B_t|\le1$.
\item The signed colors pair as $(c,\out),(c,\inn)$ with
$|A_{(c,\out)}|=|A_{(c,\inn)}|=:x_c$ and
$|B_{(c,\out)}|=|B_{(c,\inn)}|=:y_c$.
\item The product identity holds:
$\sum_s |A_s||B_s|=mn-\delta$, equivalently $2\sum_c x_cy_c=mn-\delta$.
\item The half-size bounds hold:
$x_c\le\lfloor(m+1)/2\rfloor$ and $y_c\le\lfloor(n+1)/2\rfloor$.
\item The pressure inequalities hold:
$y_c(m-x_c)\le2e_A-x_c+\delta$ and
$x_c(n-y_c)\le2e_B-y_c+\delta$.
\item The Fisher bounds hold:
$\sum_c x_c(x_c-1)\le\binom m2$ and
$\sum_c y_c(y_c-1)\le\binom n2$.
\item If $r$ is the number of signed colors and $d_A(a)=\deg_{L_A}(a)$,
$d_B(b)=\deg_{L_B}(b)$, then
$\sum_{a\in A}\binom{d_A(a)}2+\sum_{b\in B}\binom{d_B(b)}2\le\binom r2$.
\end{enumerate}
\end{theorem}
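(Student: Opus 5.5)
The plan is to read everything off two facts: the elementary description of $4$-cycles in the simple bipartite graph $L_1$ of Lemma~\ref{lem:collapsed-middle}, and the cross-covering Lemma~\ref{lem:cross}. For the equivalence, a $4$-cycle in a simple bipartite graph with parts $A\sqcup B$ and the signed colors is exactly $u-s-v-t-u$ with $u\ne v$ and $s\ne t$. So $L_1$ has no $4$-cycle if and only if no two distinct points carry two common signed colors, which is near-disjointness (ii). Since $L_1$ is bipartite it has no odd cycles, so ``no $4$-cycle'' is the same as $\girth(L_1)\ge 6$. One direction is Lemma~\ref{lem:nodouble}, and the converse is the same observation read backwards.

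Assume now $\girth(L_1)\ge6$. By Lemma~\ref{lem:cross}, each non-defect position $(a,b)$ has exactly one signed color in $\Sigma(a)\cap\Sigma(b)$, and the defect has none. Since $(a,b)\in A_s\times B_s$ if and only if $s\in\Sigma(a)\cap\Sigma(b)$, the rectangles partition $(A\times B)\setminus\defect$. From this partition:
\begin{itemize}
\item Tiling (i) follows by restricting to the row $\{a\}\times B$: the rectangles meeting that row are those with $s\ni a$, and they cover it exactly, except for the defect position when $a\in D_A$. The column statement is dual.
\item Balance (iii) is the no-fold count already used in the proof of Lemma~\ref{lem:cross}. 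The $x_c$ collapsed colour-$c$ edges of $L_A$ have pairwise distinct tails and pairwise distinct heads (no-fold together with Lemma~\ref{lem:distincttails}), so $|A_{(c,\out)}|=|A_{(c,\inn)}|=x_c$. The same holds on the $B$-side with $y_c$.
\item The product identity (iv) then follows by summing the partition, giving $\sum_s|A_s||B_s|=mn-\delta=2\sum_c x_cy_c$.
\end{itemize}

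For the half-size bound (v), apply (ii) to the pair $s=(c,\out)$, $t=(c,\inn)$. This gives $|A_s\cap A_t|\le 1$, so $m\ge|A_s\cup A_t|\ge 2x_c-1$, hence $x_c\le\lfloor(m+1)/2\rfloor$, and symmetrically for $y_c$. The Fisher bounds (vii) come from counting pairs of $A$-points: by (ii), no pair $\{a,a'\}$ lies in two sets $A_s$. Hence $\sum_s\binom{|A_s|}2\le\binom m2$, and the left side equals $2\sum_c\binom{x_c}2=\sum_c x_c(x_c-1)$ by (iii). The dual Fisher bound (viii) counts pairs of signed colors through a point. A point $v$ lies on $|\Sigma(v)|=\deg(v)$ signed colors (Lemma~\ref{lem:simple}), and by (ii) two signed colors share at most one point. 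So the pairs $\{s,t\}$ contributed by different points are distinct, and $\sum_v\binom{\deg v}2\le\binom r2$.

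The pressure inequalities (vi) are the step needing the most care, and I would prove them by double counting. Fix $s=(c,\out)$ and count the positions $(a,b)$ with $b\in B_s$ and $a\notin A_s$. There are $y_c(m-x_c)$ of them, and at most $\delta$ is the defect. Each remaining position lies in a unique rectangle $A_t\times B_t$ with $t\ne s$ and $b\in B_s\cap B_t$. By (ii), $|B_s\cap B_t|\le1$, so each such $t$ contributes at most $|A_t|$ positions. Hence
\[
y_c(m-x_c)-\delta\le\sum_{t\ne s}|A_t|=\sum_t|A_t|-x_c=2e_A-x_c,
\]
using $\sum_t|A_t|=\sum_a|\Sigma(a)|=\sum_a\deg_{L_A}(a)=2e_A$ from Lemma~\ref{lem:simple}. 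The $B$-side inequality is symmetric.

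The main risk is bookkeeping rather than ideas. Every count must be in collapsed simple edges, which is why Lemma~\ref{lem:gm-dictionary} and Lemma~\ref{lem:distincttails} are invoked in (iii). The single defect position must be handled correctly in (i) and (vi).
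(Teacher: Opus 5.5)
Your proposal is correct and follows essentially the same route as the paper. The ingredients match: the $4$-cycle characterization for the equivalence, the rectangle partition via Lemma~\ref{lem:cross}, row and column restriction for tiling, and pair counting for both Fisher bounds. Your pressure argument is also the paper's double count: counting positions $(a,b)$ with $b\in B_s$, $a\notin A_s$ is exactly the paper's sum $\sum_{t\ne s}|A_t|\,|B_s\cap B_t|$, obtained from the dual tiling identity.

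One small remark on (iii): no-fold alone already makes the color-$c$ edges have distinct tails and distinct heads. Lemma~\ref{lem:distincttails} states the converse direction and is not needed there.
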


\begin{proof}
\emph{Equivalence.} By Lemma~\ref{lem:collapsed-middle}, after the fixed no-fold coherent orientation is chosen, $L_1$ is the simple bipartite incidence graph of collapsed signed-color incidences; hence its only possible cycles below length $6$ are $4$-cycles. A $4$-cycle in $L_1$ is a pair of points together with
a pair of signed colors, each point incident to each color; thus $L_1$ has a $4$-cycle if and
only if some two points share two signed colors, equivalently some two distinct signed colors share two
points. Since $L_1$ is bipartite its girth is at least $6$ exactly when it has no $4$-cycle,
which is the stated condition. The forward implication is Lemma~\ref{lem:nodouble}; the rest of
the theorem records the structure present when it holds.

\emph{Partition.} If $(a,b)\in(A_s\times B_s)\cap(A_t\times B_t)$ with $s\ne t$ then
$\abs{A_s\cap A_t}+\abs{B_s\cap B_t}\ge 2$, contradicting Lemma~\ref{lem:nodouble}; so the
rectangles are disjoint. By Lemma~\ref{lem:cross} each non-defect position lies in exactly one
$A_s\times B_s$ and the defect in none. This is the partition, and (ii) is
Lemma~\ref{lem:nodouble}.

\emph{(i) Tiling.} Fix $a$. For each non-defect $(a,b)$ let $s_b$ be the unique signed color
in $\Sigma(a)\cap\Sigma(b)$; then $s_b\ni a$ and $b\in B_{s_b}$. For fixed $s\ni a$ the set
$\{b:s_b=s\}$ is $B_s$: in the odd case with $a=a_0$, Lemma~\ref{lem:cross} gives
$\abs{\Sigma(a_0)\cap\Sigma(b_0)}=0$, so $b_0\notin B_s$ for every $s\ni a_0$. These sets are
disjoint over $s$ (a common $b$ would put $a,b$ on two signed colors), so
$\sum_{s\ni a}\abs{B_s}=n-\one_{a\in D_A}$. The dual is symmetric, using the same defect argument
on the $B$-side.

\emph{(iii) Balance.} By no-fold each color-$c$ $A$-edge has a distinct tail and a distinct
head, so $\abs{A_{(c,\out)}}=\abs{A_{(c,\inn)}}=x_c$; likewise on $B$.

\emph{(iv) Product identity.} Summing over the partition gives
$\sum_s\abs{A_s}\abs{B_s}=mn-\delta$, and grouping by color with (iii) gives $2\sum_c x_cy_c$.

\emph{(v) Half-size.} Let $T_c,H_c\subseteq A$ be the tails and heads of color-$c$ edges,
$\abs{T_c}=\abs{H_c}=x_c$. If two distinct vertices lay in $T_c\cap H_c$ they would carry both
$(c,\out),(c,\inn)$, contradicting Lemma~\ref{lem:nodouble}; so $\abs{T_c\cap H_c}\le 1$ and
$m\ge\abs{T_c\cup H_c}=2x_c-\abs{T_c\cap H_c}\ge 2x_c-1$. The bound on $y_c$ is dual.

\emph{(vi) Pressure.} Fix a signed color $s$ over $c$, so $\abs{A_s}=x_c$, $\abs{B_s}=y_c$.
For $b\in B_s$, the dual tiling gives $\sum_{t\in\Sigma(b)}\abs{A_t}=m-\one_{b\in D_B}$;
removing the term $t=s$ (note $s\in\Sigma(b)$),
\[
        \sum_{b\in B_s}\sum_{t\in\Sigma(b)\setminus\{s\}}\abs{A_t}
        =y_c(m-x_c)-\abs{B_s\cap D_B}.
\]
Exchanging summation and using that each $t\ne s$ is carried together with $s$ by at most one
$b\in B_s$ (Lemma~\ref{lem:nodouble}), the left side is
$\sum_{t\ne s}\abs{A_t}\,\abs{B_s\cap B_t}\le\sum_{t\ne s}\abs{A_t}$. Here
$\sum_t\abs{A_t}=\sum_a\abs{\Sigma(a)}=2e_A$, since each $A$-edge contributes exactly one tail
incidence $(c,\out)$ and one head incidence $(c,\inn)$; hence $\sum_{t\ne s}\abs{A_t}=2e_A-x_c$.
Since $\abs{B_s\cap D_B}\le\abs{D_B}=\delta$, this gives $y_c(m-x_c)\le 2e_A-x_c+\delta$, and the
second inequality is dual.

\emph{(vii) Fisher bounds.} These specialize the classical Fisher-type incidence inequality
\cite{vanLintWilson}. By Lemma~\ref{lem:nodouble} any two $A$-points lie on at most one
common signed color, so the pairs of $A$-points counted with multiplicity by the signed colors
are distinct: $\sum_s\binom{\abs{A_s}}2\le\binom m2$. Grouping the two signed colors over each
color $c$ and using $\abs{A_{(c,\out)}}=\abs{A_{(c,\inn)}}=x_c$ turns the left side into
$2\sum_c\binom{x_c}2=\sum_c x_c(x_c-1)$, giving the first bound; the $B$-side is identical.

\emph{(viii) Dual Fisher.} Near-disjointness (ii) says any two distinct signed colors share at most one
point of $A\sqcup B$, so the number of pairs of distinct signed colors that meet is at most
$\binom r2$, where $r$ is the number of signed colors. Counting these pairs instead by their common
point, a vertex $v$ lying on
$d_v=\abs{\Sigma(v)}$ signed colors accounts for $\binom{d_v}2$ of them, and no pair is counted
twice; hence $\sum_{v\in A\sqcup B}\binom{d_v}2\le\binom r2$. By the oriented-simple property
$\abs{\Sigma(a)}=\deg_{L_A}(a)$ and $\abs{\Sigma(b)}=\deg_{L_B}(b)$, which is the stated bound.
\end{proof}

\begin{figure}[t]
\centering
\begin{tikzpicture}[scale=0.6]
  \fill[blue!16]  (0,3) rectangle (2,6);
  \fill[red!13]   (3,1) rectangle (6,4);
  \draw[step=1,gray!55,very thin] (0,0) grid (6,6);
  \draw[blue!70,thick] (0,3) rectangle (2,6);
  \draw[red!75,thick]  (3,1) rectangle (6,4);
  \foreach \c in {1,...,6} {\node[gray,font=\scriptsize] at (\c-0.5,6.32) {$b_{\c}$};}
  \foreach \r in {1,...,6} {\node[gray,font=\scriptsize] at (-0.38,6.5-\r) {$a_{\r}$};}
  \node[blue!70,font=\small] at (1,4.5)   {$A_s\times B_s$};
  \node[red!75,font=\small]  at (4.5,2.5) {$A_{s'}\!\times\!B_{s'}$};
  \draw[->,gray] (8.05,3.5) -- (6.05,3.5);
  \node[gray,font=\scriptsize,anchor=west] at (8.1,3.5) {shared row $a_3$};
\end{tikzpicture}
\caption{The Structure Theorem reads $\girth(L_1)\ge 6$ as a partition of the board $A\times B$
(rows indexed by $A$, columns by $B$) into combinatorial rectangles $A_s\times B_s$, one per
signed color $s$. Near-disjointness forces two rectangles to share at most one row in total or at
most one column in total, and never both; here $A_s\times B_s$ and $A_{s'}\times B_{s'}$ share only
the row $a_3$, hence no common position. When $mn$ is odd a single position, the defect, is left
uncovered.}
\label{fig:rectangles}
\end{figure}

We record the Moore bound for later use, in the irregular form of Alon, Hoory, and
Linial \cite{AlonHooryLinial}, with its sharpness.

\begin{lemma}[Moore bound, girth $6$]\label{lem:moore}
Let $G$ be a finite simple graph with $r$ vertices, minimum degree at least $2$, average
degree $d$, and girth at least $6$. Then $r\ge 2\bigl(1+(d-1)+(d-1)^2\bigr)$, equivalently
\[
        e(G)\le E_r:=\Bigl\lfloor\tfrac r4\bigl(1+\sqrt{2r-3}\bigr)\Bigr\rfloor .
\]
Equality holds for the incidence graph of a projective plane of order $q$, where $d=q+1$ and
$r=2(q^2+q+1)$.
\end{lemma}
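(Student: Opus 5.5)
The plan is to treat the vertex inequality as the even-girth case of the Alon--Hoory--Linial irregular Moore bound, and then obtain the edge form and the sharpness statement by elementary algebra. For girth at least $2k$, minimum degree at least $2$ and average degree $d$, \cite{AlonHooryLinial} gives $r\ge 2\sum_{i=0}^{k-1}(d-1)^i$. Taking $k=3$ gives $r\ge 2\bigl(1+(d-1)+(d-1)^2\bigr)=2(d^2-d+1)$. The hypothesis that the minimum degree is at least $2$ is exactly the one their non-backtracking-walk argument needs, so this step is a citation.

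\emph{Edge form.} Write $d=2e(G)/r$. The inequality $r\ge 2(d^2-d+1)$ rearranges to $d^2-d+1-r/2\le 0$. Since $d\ge 2>0$, this forces $d\le\tfrac12\bigl(1+\sqrt{2r-3}\bigr)$. Substituting $e(G)=rd/2$ gives $e(G)\le\tfrac r4\bigl(1+\sqrt{2r-3}\bigr)$, and because $e(G)$ is an integer we may take the floor, which is $E_r$. The two forms are equivalent because every step is a monotone rearrangement.

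\emph{Sharpness.} For the Levi graph of a projective plane of order $q$ I would check the following directly.
\begin{itemize}
\item It is $(q+1)$-regular on $2(q^2+q+1)$ vertices.
\item It has girth $6$: two points lie on a unique line, and two lines meet in a unique point, so there is no $4$-cycle; a triangle of the plane gives a $6$-cycle.
\item With $d=q+1$ we get $2(1+q+q^2)=r$, so the vertex bound is an equality.
\item Then $2r-3=(2q+1)^2$, so $\tfrac r4(1+\sqrt{2r-3})=\tfrac r4(2q+2)=r(q+1)/2=e$, and the floor is attained.
\end{itemize}

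\emph{Main obstacle.} The only nontrivial ingredient is the irregular vertex bound, and I would import it rather than reprove it. The natural elementary attempt runs as follows. For each edge $uv$, girth $\ge6$ makes the radius-$2$ ball around the edge consist of distinct vertices. Its size is $2+(d_u-1)+(d_v-1)+\sum_{w\sim u,\,w\ne v}(d_w-1)+\sum_{w\sim v,\,w\ne u}(d_w-1)\le r$. Averaging over edges handles the linear terms via $\sum_w d_w^2\ge rd^2$. The quadratic term $\sum_{uv}(d_u-1)(d_v-1)$, however, is not bounded below by $e(d-1)^2$ in general, for example for star-like degree distributions. Controlling it is exactly what the entropy argument for random non-backtracking walks in \cite{AlonHooryLinial} achieves. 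So I would cite that theorem for this step; if a self-contained argument were wanted, I would follow their walk-counting proof rather than the edge-ball average.
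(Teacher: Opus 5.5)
Your proof is correct and follows the paper's argument: you cite Alon--Hoory--Linial for $r\ge 2(d^2-d+1)$, solve the quadratic for $d$, substitute $e(G)=rd/2$ and take the floor, then check the projective-plane Levi graph. Your observation $2r-3=(2q+1)^2$ makes explicit that the floor is attained there.

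One correction to your side remark: under the minimum-degree-$2$ hypothesis the edge-ball average does close. Counting non-backtracking walks of length $3$, the entropy bound of \cite{AlonHooryLinial} gives exactly $\sum_{uv\in E}(d_u-1)(d_v-1)\ge e(G)(d-1)^2$. Your star example has minimum degree $1$, so it lies outside the lemma's hypotheses. The edge-ball averaging combined with that walk-count bound is therefore a complete proof, essentially theirs, not an alternative that fails.
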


\begin{proof}
The first inequality is \cite{AlonHooryLinial}. Solving $d^2-d+1\le r/2$ gives
$d\le\tfrac12(1+\sqrt{2r-3})$, so $e(G)=rd/2\le\tfrac r4(1+\sqrt{2r-3})$, and $e(G)$ is an
integer. For a plane of order $q$ the Levi graph is $(q+1)$-regular bipartite of girth $6$ on
$2(q^2+q+1)$ vertices, where $(q+1)^2-(q+1)+1=q^2+q+1=r/2$.
\end{proof}

For any positive integer $r$ set $h_r=\lfloor\tfrac{r+1}2\rfloor$ and
$E_r=\lfloor\tfrac r4(1+\sqrt{2r-3})\rfloor$ throughout.

\begin{remark}
By (i)--(iii) the middle link is the incidence graph of a partial linear space: a cross-free
family of rectangles tiling the board. The pressure inequalities (vi) are strictly stronger
than the Cauchy--Schwarz inequalities available from (i)--(iv) alone; they encode that the two
signed colors over a color are balanced and that lines meet in at most one point. This extra
strength is what the diagonal analysis of Section~\ref{sec:B} requires
(Remark~\ref{rem:necessary}).
\end{remark}

\section{Small supports}\label{sec:small}

In this section $\Pi$ is orientable and no-fold, even or odd in the standing taiko sense, with $\girth(L_1)\ge 6$, so the Structure
Theorem applies and $L_A,L_B$ are oriented simple. We dispose of supports with a class of size
at most three, and force $\abs A,\abs B\ge 6$ in the $(6,3)$ branch.

\begin{lemma}[no thin class]\label{lem:m2}
There is no orientable no-fold product structure, even or odd in the standing taiko sense, with
$m,n\ge2$, $\girth(L_1)\ge6$, and $\min\{m,n\}\le 2$.
\end{lemma}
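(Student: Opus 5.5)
The plan is to reduce to the case $m=2$ and then let the Structure Theorem (Theorem~\ref{thm:structure}) do all the work. The argument uses only the product identity (iv) and the half-size bound (v).

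\emph{Reduction.} The definitions of product structure, orientability, no-fold, signed colors and $L_1$ are symmetric under exchanging the roles of $A$ and $B$. All conclusions of Theorem~\ref{thm:structure} come in dual pairs. So I may assume $m=\min\{m,n\}\le 2$, and by the standing hypothesis $m\ge2$, hence $m=2$ and $n\ge 2$. Write $A=\{a_1,a_2\}$.

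\emph{The $A$-side has one active color.} By Lemma~\ref{lem:simple}, $L_A$ is a simple graph on two vertices, so $e_A\le\binom22=1$. Every active color has $x_c\ge1$, and $e_A=\sum_c x_c$, so there is at most one active color. There is at least one: $mn=2n\ge4$ and at most one position is a defect, so some $2$-cell exists. By the disjoint-vertex condition that cell induces the horizontal edge $\{a_1,a_2\}$ and some $B$-edge. Thus there is exactly one active color $c$, with $x_c=1$.

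\emph{Contradiction from (iv) and (v).} The product identity gives $2x_cy_c=2y_c=2n-\delta$.
\begin{itemize}[leftmargin=2.6em]
\item In the odd case $\delta=1$ the right side is odd while the left side is even, which is impossible. (This is consistent with $mn=2n$ being even, so an odd structure with $m=2$ cannot exist at all.)
\item In the even case $\delta=0$ we get $y_c=n$. The half-size bound (v) gives $y_c\le\lfloor (n+1)/2\rfloor$, and this is $<n$ for every $n\ge2$.
\end{itemize}
Either way we reach a contradiction.

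I do not expect a real obstacle. The only points needing care are the existence of at least one active color, so that the sum in (iv) is nonempty and the parity argument is meaningful, and the legitimacy of the $A\leftrightarrow B$ symmetry. Both follow from the definitions recalled in Section~\ref{sec:taiko}.
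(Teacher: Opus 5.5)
Your proof is correct, but it takes a different route from the paper's. You send everything through the Structure Theorem. With a single active color and $x_c=1$, the product identity (iv) forces $y_c=n$ in the even case. The half-size bound (v) then caps $y_c$ at $\lfloor (n+1)/2\rfloor<n$. This is legitimate, since Theorem~\ref{thm:structure} precedes the lemma and does not depend on it.

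The paper instead argues directly at the level of cells. With $m=2$, every cell pairs row $a_1$ with row $a_2$, so the $B$-edge occurrences define a fixed-point-free permutation $\pi$ of $B$. Orientability, through the single-direction property of Lemma~\ref{lem:simple}, rules out $n=2$ and any $2$-cycle of $\pi$. Lemma~\ref{lem:distincttails} makes the edges $b\to\pi(b)$ distinct collapsed edges. Hence every $b$ carries both $(c,\out)$ and $(c,\inn)$, which gives a $4$-cycle in $L_1$.

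The underlying contradiction is the same in both proofs: two $B$-points share both signed colors of $c$, which is exactly what the proof of (v) detects once $y_c=n$. The difference is how each proof reaches ``every $b$ carries both signed colors.'' You get it from cross-covering and the product count, which already use $\girth(L_1)\ge 6$. The paper gets it from orientability and no-fold alone.

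Your version is shorter and avoids the case analysis on the cycle structure of $\pi$. The paper's version is self-contained and uses the girth hypothesis only in the last line. It therefore shows something slightly stronger: every orientable no-fold structure with $\min\{m,n\}=2$ has a $4$-cycle in $L_1$, not merely that none has $\girth(L_1)\ge 6$.
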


\begin{proof}
By symmetry suppose $m=\min\{m,n\}=2$ (we have $m,n\ge 2$). The odd case is impossible by
parity: $mn=2n$ is even while $\delta=1$ needs $mn$ odd. So $\delta=0$, and $P$ partitions
$A\times B$ into $2$-cells. Each cell meeting row $a_1$ has its other position in row $a_2$,
and its $A$-edge is the unique edge of the oriented-simple graph $L_A$ on two vertices, say
$a_1\to a_2$ of color $c$. By orientability every cell's $B$-edge occurrence has color $c$, oriented from
its $a_1$-row endpoint to its $a_2$-row endpoint, so the cells determine a fixed-point-free
permutation $\pi$ of $B$: it is defined from the row-$a_1$ positions, and it is bijective because
the row-$a_2$ positions are also partitioned exactly once. The occurrences are $b\to\pi(b)$.
Lemma~\ref{lem:distincttails} makes the occurrences with distinct tails distinct collapsed simple
edges, so each vertex is the tail of a color-$c$ simple edge. Applying the same statement to heads
shows that each vertex is also the head of a color-$c$ simple edge. If $n=2$, the permutation $\pi$
is a transposition, so the two occurrences would force the unique collapsed $B$-edge to carry two
opposite coherent orientations, contradicting the oriented-simple convention. Hence $n\ge3$.
If $\pi$ has a $2$-cycle $(b\ b')$, the two occurrences $b\to b'$ and $b'\to b$ force opposite
orientations on the same collapsed $B$-edge, again contradicting Lemma~\ref{lem:simple}. Hence all
cycles of $\pi$ have length at least $3$. Therefore the outgoing edge $b\to\pi(b)$ and the incoming
edge $\pi^{-1}(b)\to b$ are distinct collapsed simple edges for every $b$, so every $b$ carries
both signed colors:
$\{(c,\out),(c,\inn)\}\subseteq\Sigma(b)$.
Two distinct vertices $b,b'$ then carry the common signed colors $(c,\out),(c,\inn)$, a
$4$-cycle in $L_1$, contradicting $\girth(L_1)\ge 6$.
\end{proof}

\begin{lemma}[type $(3,3)$]\label{lem:threethree}
Let $\Pi$ be an orientable no-fold product structure, even or odd in the standing taiko sense, of type $(3,3)$ with $\girth(L_1)\ge 6$.
If in addition $\girth(L_{AB})\ge 4$ \emph{or} $\girth(L_1)\ge 8$, then $\Pi$ does not exist.
In particular no $(3,3)$ structure satisfies any branch of $\mathsf T_4$, since each branch
has $\girth(L_{AB})\ge 4$ or $\girth(L_1)\ge 8$.
\end{lemma}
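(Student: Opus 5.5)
The plan is a direct case analysis driven by the numerical constraints of Theorem~\ref{thm:structure}. Since $mn=9$ is odd, the structure is odd with $\delta=1$. The product identity (iv) then gives $\sum_c x_cy_c=4$. The half-size bound (v) gives $x_c,y_c\le 2$, and the Fisher bounds (vii) give $\sum_c x_c(x_c-1)\le 3$ and $\sum_c y_c(y_c-1)\le3$. So at most one active color has $x_c=2$, and at most one has $y_c=2$.

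The admissible multisets of pairs $(x_c,y_c)$ are therefore the following:
\begin{itemize}[leftmargin=2.6em]
\item $\{(2,2)\}$;
\item $\{(2,1),(1,2)\}$;
\item $\{(2,1),(1,1),(1,1)\}$, and its mirror $\{(1,2),(1,1),(1,1)\}$;
\item $\{(1,1)^4\}$.
\end{itemize}
By Lemma~\ref{lem:simple}, $e_A=\sum_c x_c\le\binom32=3$ and likewise $e_B\le3$. This kills $\{(1,1)^4\}$, which has $e_A=4$.

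I will then split the surviving profiles according to whether $e_A$ or $e_B$ equals $3$. On three vertices, $e_A=3$ means $L_A$ is a triangle, so $\girth(L_{AB})=3$.

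\emph{Profiles with $e_A=e_B=2$.} These are $\{(2,2)\}$ and $\{(2,1),(1,2)\}$. In each I will use the tiling identity (i) at a vertex of degree $2$ in the path $L_A$ (or $L_B$) to reach a contradiction.
\begin{itemize}[leftmargin=2.6em]
\item For $\{(2,2)\}$, the path $a_1-a_2-a_3$ has both edges of color $c$. No-fold forces the middle vertex to carry both $(c,\out)$ and $(c,\inn)$. Tiling then gives $\sum_{s\ni a_2}\abs{B_s}=2+2=4>3$, which is impossible.
\item For $\{(2,1),(1,2)\}$, I expect a similar contradiction from the tiling counts at the middle vertices, combined with Lemma~\ref{lem:cross} at the defect, which forces $\Sigma(a_0)\cap\Sigma(b_0)=\varnothing$. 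If tiling alone does not settle it, I will also use the pressure inequality (vi).
\end{itemize}

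\emph{Profiles with $e_A=3$ or $e_B=3$.} By the triangle remark above, these fail the hypothesis $\girth(L_{AB})\ge4$ outright. It remains to rule them out under $\girth(L_1)\ge 8$. The plan is to show the middle link is forced. The tiling sums (i) at all six points, together with the defect constraint and balance (iii), should pin down the incidences up to relabeling. I expect the result to be the subdivision of $K_4$, in line with the claim made in the introduction: the signed colors and points play the roles of the subdivided vertices. That graph has girth $6$, contradicting $\girth(L_1)\ge 8$.

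I expect this last step to be the main obstacle: showing rigidly that the incidence graph is forced. I would first try to locate a $6$-cycle $a-s-b-t-a'-u-a$ directly. The natural place to look is the triangle $L_A$, whose three vertices pairwise share signed colors. That would give a cycle of length $6$ in $L_1$ without a full classification, which suffices.

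The final clause of the lemma, that no $(3,3)$ structure satisfies any branch of $\mathsf T_4$, then follows. Each branch demands either $\girth(L_{AB})\ge4$ or $\girth(L_1)\ge8$, and in every branch $\girth(L_1)\ge6$ holds.
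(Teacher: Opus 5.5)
There is a genuine gap in your treatment of the profile $\{(2,1),(1,2)\}$. You file it under $e_A=e_B=2$, but $e_A=\sum_c x_c=2+1=3$ and likewise $e_B=3$, so $L_A$ and $L_B$ are triangles, not paths. (Also, $\{(2,1),(1,1),(1,1)\}$ and its mirror have $e_A=4$, resp.\ $e_B=4$. They die by the same bound $e_A,e_B\le 3$ you used on $\{(1,1)^4\}$, so only $\{(2,2)\}$ and $\{(2,1),(1,2)\}$ survive.)

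More importantly, no argument from tiling, the defect condition, or pressure can refute $\{(2,1),(1,2)\}$, because this profile is realized with $\girth(L_1)=6$. Take the cells $\{(a_1,b_1),(a_2,b_2)\}$, $\{(a_2,b_1),(a_3,b_2)\}$ of color $c_1$ and $\{(a_1,b_3),(a_3,b_1)\}$, $\{(a_1,b_2),(a_3,b_3)\}$ of color $c_2$, with defect $(a_2,b_3)$. Orient $a_1\to a_2\to a_3$, $b_1\to b_2$ for $c_1$, and $a_1\to a_3$, $b_2\to b_3\to b_1$ for $c_2$. This structure is orientable and no-fold, and the six sets $\Sigma(v)$ are exactly the six $2$-subsets of the four signed colors. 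So for this profile you must invoke the additional hypothesis. The triangles contradict $\girth(L_{AB})\ge 4$, and under $\girth(L_1)\ge 8$ you must exhibit a $6$-cycle.

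Your proposed source of that $6$-cycle does not work. An edge $a\to a'$ of color $c$ puts $(c,\out)$ at $a$ and $(c,\inn)$ at $a'$, so adjacency in $L_A$ gives no shared signed color. Indeed, in the example above $\Sigma(a_1)\cap\Sigma(a_3)=\varnothing$, so the vertices of the triangle $L_A$ need not pairwise share signed colors.

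The paper supplies this step by a count. The four signed colors have degree $x_c+y_c=3$ and the six points have degree $2$. Since $\girth(L_1)\ge 6$, the map $v\mapsto\Sigma(v)$ is injective into the $\binom42=6$ pairs, hence bijective. So $L_1$ is the subdivision of $K_4$, and a triangle of $K_4$ yields a $6$-cycle.

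A shorter fix, in the spirit of Lemma~\ref{lem:thin}, also works. Take $s=(c_1,\out)$ with $A_s=\{a,a'\}$, and choose $b\notin B_s$ with $(a,b),(a',b)$ both non-defect; two columns are available and there is only one defect. Cross-covering gives $t\in\Sigma(a)\cap\Sigma(b)$ and $t'\in\Sigma(a')\cap\Sigma(b)$. Both differ from $s$ since $b\notin B_s$, and $t\ne t'$ since otherwise $a,a'$ would share $s$ and $t$. Hence $a-s-a'-t'-b-t-a$ is a $6$-cycle.

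Your handling of $\{(2,2)\}$ by tiling at the middle vertex ($2+2>3$) is correct and is a fine alternative to the paper's $4$-cycle argument.
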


\begin{proof}
Here $mn=9$ is odd, so $\delta=1$ and the Structure Theorem gives the product identity
$\sum_c x_cy_c=(9-1)/2=4$. Half-size gives $x_c,y_c\le h_3=2$, and $L_A,L_B$ simple on $3$
vertices gives $e_A=\sum_c x_c\le 3$ and $e_B=\sum_c y_c\le 3$. A color contributes
$x_cy_c\in\{1,2,4\}$. If some color contributes $4$, i.e.\ $(x_c,y_c)=(2,2)$, it is the only
color (any other would make the sum exceed $4$). Otherwise every contribution is $\le 2$; to
sum to $4$ under $\sum x_c\le 3,\ \sum y_c\le 3$ the only possibility is two colors with
$(x_c,y_c)=(2,1)$ and $(1,2)$. Thus there are exactly two admissible color profiles.

\emph{Profile $\{(2,2)\}$.} A single color with $x_c=2$ on three vertices forces $L_A$ to be
the path $a_1\!-\!a_2\!-\!a_3$ (the only simple graph with two edges on three vertices), both
edges of color $c$. By no-fold the middle vertex $a_2$ cannot be the tail of both edges or the
head of both, so it is the tail of one and the head of the other; hence $a_2$ carries both
$(c,\out)$ and $(c,\inn)$. The same holds
for the middle vertex of the $B$-path. These two vertices carry the two common signed colors,
a $4$-cycle in $L_1$, contradicting $\girth(L_1)\ge 6$.

\emph{Profile $\{(2,1),(1,2)\}$.} Here $e_A=e_B=3$, so $L_A,L_B$ are triangles. If
$\girth(L_{AB})\ge 4$ then triangles are forbidden, a contradiction, and we are done. Assume
instead $\girth(L_1)\ge 8$. In this profile the four signed colors $(c,\out),(c,\inn)$
($c\in\{c_1,c_2\}$) each have degree $\abs{A_s}+\abs{B_s}=x_c+y_c=3$ in $L_1$, while each of
the six points of $A\sqcup B$, being a vertex of a triangle, has degree
$\abs{\Sigma(\cdot)}=2$. So $L_1$ is a bipartite graph on six degree-$2$ points and four
degree-$3$ lines. Because $\girth(L_1)\ge 6$ no two points share two signed colors, so the map
sending a point to its $2$-element set of incident lines is injective; there are exactly
$\binom 42=6$ such pairs and six points, so every pair occurs, and $L_1$ is the
\emph{subdivision of the complete graph $K_4$} on the four lines (the six points are its
subdivided edges). But $K_4$ contains a triangle, whose subdivision is a $6$-cycle in $L_1$;
hence $\girth(L_1)=6<8$, a contradiction.

Finally, each branch of $\mathsf T_4$ meets the hypothesis: $(6,3)$ and $(4,4)$ have
$\girth(L_{AB})\ge 4$, and $(3,6)$ has $\girth(L_1)\ge 12\ge 8$.
\end{proof}

\begin{lemma}[degree-one obstruction]\label{lem:degreeone}
Let $\Pi$ be an orientable no-fold product structure, even or odd in the standing taiko sense, with $\girth(L_1)\ge6$ and $m,n\ge3$. If a
vertex of $L_A$ has degree one, then $n\le3$; symmetrically, if a vertex of $L_B$ has degree one,
then $m\le3$.
\end{lemma}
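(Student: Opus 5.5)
The plan is to read the statement directly off the Structure Theorem (Theorem~\ref{thm:structure}), using only the tiling identity (i), the balance (iii) and the half-size bound (v). No case analysis on the profile should be needed.

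Let $a\in A$ have $\deg_{L_A}(a)=1$. By the oriented-simple Lemma~\ref{lem:simple}, $\abs{\Sigma(a)}=\deg_{L_A}(a)=1$, so $\Sigma(a)=\{s\}$ for a single signed color $s$ lying over some color $c$. The tiling identity (i) at $a$ then has only one term:
\[
        \abs{B_s}=n-\one_{a\in D_A}\ \ge\ n-1 .
\]
In words, the whole row of $a$, except possibly the defect position, is covered by the single rectangle $A_s\times B_s$.

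By balance (iii), $\abs{B_s}=y_c$. The half-size bound (v) gives $y_c\le\lfloor (n+1)/2\rfloor\le (n+1)/2$. Combining the two estimates,
\[
        n-1\ \le\ \abs{B_s}\ \le\ \tfrac{n+1}2 ,
\]
so $n\le 3$. The $B$-side statement follows by the same argument with the roles of $A$ and $B$ exchanged, using the dual tiling identity $\sum_{s\ni b}\abs{A_s}=m-\one_{b\in D_B}$ and the bound $x_c\le\lfloor(m+1)/2\rfloor$.

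I do not expect a real obstacle. The only point to check is that the defect is handled correctly: in the odd case with $a=a_0$, the tiling identity already subtracts the defect, which is why the bound is $n-1$ rather than $n$. This slack is precisely what leaves $n=3$ possible. The hypothesis $m,n\ge3$ is not used beyond the standing assumptions needed for the Structure Theorem to apply.
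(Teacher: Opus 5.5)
Your proof is correct and is essentially the paper's argument: both show that the row of $a$, minus a possible defect position, forces $y_c\ge n-\one_{a\in D_A}\ge n-1$, and then play this against the half-size bound $y_c\le\lfloor (n+1)/2\rfloor$. The only difference is how the lower bound is obtained: the paper counts distinct color-$c$ $B$-edges directly, using orientability and Lemma~\ref{lem:distincttails}, whereas you read it off the one-term tiling identity (i) together with balance (iii), which packages the same cross-covering fact.
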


\begin{proof}
Suppose $a\in A$ has $\deg_{L_A}(a)=1$, with unique edge $\{a,a'\}$ of color $c$. If this edge is
oriented $a\to a'$, then every non-defect position $(a,b)$ lies in a $2$-cell whose $A$-edge is
this one, so by orientability $b$ is the tail of a color-$c$ $B$-edge occurrence. As $b$ varies over
the columns for which $(a,b)$ is non-defect, these tails are distinct, and
Lemma~\ref{lem:distincttails} makes the resulting color-$c$ $B$-edges distinct collapsed simple
edges. If the unique edge is oriented $a'\to a$, the same argument uses heads and incoming edges,
together with the head version of Lemma~\ref{lem:distincttails}. In either case there are
$n-\one_{a\in D_A}$ such columns, hence $y_c\ge n-\one_{a\in D_A}\ge n-1$. The half-size bound
gives $y_c\le h_n=\lfloor(n+1)/2\rfloor$, so $n-1\le\lfloor(n+1)/2\rfloor$ and $n\le3$. The
$B$-side statement is the same argument with $A$ and $B$ interchanged.
\end{proof}

\begin{proposition}[the $(6,3)$ branch has $\abs A,\abs B\ge 6$]\label{prop:six}
In the $(6,3)$ branch, any orientable no-fold product structure, even or odd in the standing taiko sense, with $m,n\ge 2$, has minimum degree $\ge 2$ in both $L_A$ and
$L_B$, and
$m,n\ge 6$. \textup{(}If no example exists the statement is vacuous.\textup{)}
\end{proposition}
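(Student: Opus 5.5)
The plan is to combine the three small-support lemmas of this section with one elementary graph fact: a finite graph of minimum degree at least $2$ contains a cycle. In the $(6,3)$ branch we have $\girth(L_{AB})\ge 6$ and $\girth(L_1)\ge 6$, so the Structure Theorem and Lemmas~\ref{lem:m2}, \ref{lem:threethree}, \ref{lem:degreeone} all apply. By Lemma~\ref{lem:m2} we may assume $m,n\ge 3$ from the outset, which is exactly the standing hypothesis of Lemma~\ref{lem:degreeone}.

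\emph{Step 1: no isolated vertices.} Suppose $a\in A$ had $\deg_{L_A}(a)=0$. Every non-defect position $(a,b)$ lies in a $2$-cell and so induces a horizontal $A$-edge at $a$. Hence all $n\ge 3$ positions in row $a$ would be defects, but there is at most one defect. The same argument applies on the $B$-side, so every vertex of $L_A$ and of $L_B$ has degree at least $1$.

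\emph{Step 2: no leaves.} Suppose some $a\in A$ has $\deg_{L_A}(a)=1$. Then Lemma~\ref{lem:degreeone} gives $n\le 3$, hence $n=3$. Since $\girth(L_B)\ge 6$ and $L_B$ has only three vertices, $L_B$ contains no cycle at all and is a forest. By Step~1 it has no isolated vertices, so some vertex $b$ is a leaf of $L_B$. The $B$-side of Lemma~\ref{lem:degreeone} then forces $m\le 3$, hence $m=3$. Thus $\Pi$ has type $(3,3)$ with $\girth(L_{AB})\ge 6\ge 4$, which Lemma~\ref{lem:threethree} excludes. The symmetric argument, starting from a leaf of $L_B$, is identical. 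So both $L_A$ and $L_B$ have minimum degree at least $2$.

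\emph{Step 3: size.} A finite graph of minimum degree at least $2$ contains a cycle. Because $\girth(L_A)\ge 6$, that cycle has length at least $6$, so $m=\abs{A}\ge 6$. Likewise $n\ge 6$.

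The only delicate point is Step~2. There one must notice that a side of size $3$ carries a graph of girth at least $6$ only as a forest, and a forest without isolated vertices has a leaf. This is what lets Lemma~\ref{lem:degreeone} be applied a second time in the other direction, collapsing everything to type $(3,3)$. Also, Step~1 must be done first, so that ``forest'' really yields a leaf rather than an isolated vertex.
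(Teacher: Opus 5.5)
Your proof is correct and follows essentially the paper's route. It uses the same ingredients: Lemma~\ref{lem:m2}, the one-defect count ruling out isolated vertices, Lemma~\ref{lem:degreeone} applied on both sides via the acyclic three-vertex graph, Lemma~\ref{lem:threethree}, and the cycle-length bound. The only difference is bookkeeping: the paper splits on $\min\{m,n\}=3$ versus $\min\{m,n\}\ge 4$, whereas you split on whether a leaf exists.
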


\begin{proof}
By Lemma~\ref{lem:m2} we may assume $m,n\ge 3$. Lemma~\ref{lem:degreeone} shows that a
degree-one vertex of $L_A$ forces $n\le3$, and a degree-one vertex of $L_B$ forces $m\le3$. Since
$m,n\ge 3$, every row and every column contains a non-defect position (at most one position is a
defect), and that position lies in a $2$-cell, so no vertex of $L_A$ or $L_B$ is isolated.

Suppose $\min\{m,n\}=3$, say $m=3$. Then $L_A$ is a graph on three vertices with
$\girth(L_A)\ge 6$, hence acyclic; having no isolated vertex, it has a vertex of degree exactly
$1$. Lemma~\ref{lem:degreeone} gives $n\le 3$, so $n=3$ and $\Pi$ has type $(3,3)$, excluded
by Lemma~\ref{lem:threethree}.

Suppose instead $\min\{m,n\}\ge 4$. Then no vertex has degree $1$ (else $n\le 3$ or $m\le 3$),
and none is isolated, so $L_A,L_B$ have minimum degree at least $2$ and therefore contain
cycles; as $\girth(L_A),\girth(L_B)\ge 6$, each such cycle has at least six vertices, so
$m,n\ge 6$.
\end{proof}

\section{The high middle-link-girth branches: Theorem A}\label{sec:A}

Here $\girth(L_1)\ge 8$, so $L_1$ has no $4$-cycle and no $6$-cycle; all product structures in this section remain even or odd in the standing taiko sense. The key point is that the
rectangles degenerate.

\begin{lemma}[thin lines]\label{lem:thin}
Let $\Pi$ be orientable and no-fold, even or odd in the standing taiko sense, with
$\girth(L_1)\ge 8$, and let $2\le m\le n$ with $n\ge 4$. Then $\abs{A_s}\le 1$ for every
signed color $s$. If moreover $m\ge 3$, then also
$\abs{B_s}\le 1$ for every signed color $s$.
\end{lemma}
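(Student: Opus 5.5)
The idea is to show that a signed color carrying two points on one side produces a $6$-cycle in $L_1$, which $\girth(L_1)\ge 8$ forbids. Since $\girth(L_1)\ge 8\ge 6$, the Structure Theorem, Lemma~\ref{lem:nodouble} and Lemma~\ref{lem:cross} all apply. In particular, each non-defect position $(a,b)$ determines a unique signed color $t_{ab}\in\Sigma(a)\cap\Sigma(b)$.

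\emph{First claim.} Suppose $a\ne a'$ both lie in $A_s$, where $s$ lies over the color $c$.

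\begin{enumerate}[label=\textup{(\arabic*)},leftmargin=2.6em]
\item Pick a column $b\notin B_s$ such that $(a,b)$ and $(a',b)$ are both non-defect.
\item Then $t:=t_{ab}$ and $t':=t_{a'b}$ both differ from $s$, because $b\notin B_s$.
\item Also $t\ne t'$. Otherwise $a$ and $a'$ would share the two signed colors $s$ and $t$, contradicting Lemma~\ref{lem:nodouble}.
\item Hence $a-s-a'-t'-b-t-a$ is a closed walk on six distinct vertices (three points and three signed colors). It is a $6$-cycle in $L_1$, contradicting $\girth(L_1)\ge8$.
\end{enumerate}

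The real content is step (1), the existence of such a column $b$. By balance, $\abs{B_s}=y_c$, and the half-size bound gives $y_c\le h_n=\lfloor (n+1)/2\rfloor$. The defect excludes at most one further column. So the number of admissible $b$ is at least $n-h_n-\delta\ge n-\lfloor(n+1)/2\rfloor-1$. This is $\ge1$ exactly when $n\ge4$, which is where the hypothesis $n\ge 4$ is used. For $n=4$ the count is $4-2-1=1$.

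\emph{Second claim.} Now assume also $m\ge3$, and suppose $b\ne b'$ both lie in $B_s$. We run the dual argument: find a row $a\notin A_s$ with $(a,b)$ and $(a,b')$ non-defect, and build the $6$-cycle $b-s-b'-t'-a-t-b$ exactly as above.

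The half-size bound alone would allow $\abs{A_s}$ up to $h_m$, which is too large when $m=3$. Instead we use the first claim, which is available since $n\ge4$: it gives $\abs{A_s}\le1$. Then at most one row lies in $A_s$, and at most one more row is excluded by the defect. This leaves at least $m-2\ge1$ admissible rows, since $m\ge 3$.

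\emph{Main obstacle.} The only delicate points are the counting for the existence of the auxiliary column or row, and checking that the six vertices of the cycle are distinct. Distinctness reduces to $t\ne t'$ (step (3)) and $t,t'\ne s$ (step (2)). The borderline cases $n=4$ and $m=3$ are exactly where the counts equal $1$. Their tightness explains both the hypothesis $n\ge4$ and the order of the argument: the $A$-side bound must be proved first and then fed into the $B$-side count.
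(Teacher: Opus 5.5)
Your proposal is correct and is essentially the paper's own proof. In both, two points on a signed color together with an auxiliary column outside $B_s$ (resp.\ row outside $A_s$) give a $6$-cycle via cross-covering and no-double-incidence; half-size together with $n\ge4$ supplies the column, and the already-proved bound $|A_s|\le1$ together with $m\ge3$ supplies the row. The only cosmetic difference is that you count admissible columns directly, whereas the paper argues by contradiction that every non-defect column would lie in $B_s$, forcing $|B_s|\ge n-1>h_n$.
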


\begin{proof}
Since $\girth(L_1)\ge8$ implies $\girth(L_1)\ge6$, the Structure Theorem applies throughout this
proof, in particular the half-size bounds and cross-covering lemma are available. Suppose
$\abs{A_s}\ge 2$, with $a_1,a_2\in A_s$ distinct. For any $b\notin B_s$ with
$(a_1,b),(a_2,b)$ non-defect, Lemma~\ref{lem:cross} gives unique $t_i\in\Sigma(a_i)\cap
\Sigma(b)$; both differ from $s$ (as $b\notin B_s$), and $t_1\ne t_2$ (else $a_1,a_2$ share
$s,t_1$, a $4$-cycle). Then $a_1-s-a_2-t_2-b-t_1-a_1$ is a $6$-cycle in $L_1$, impossible.
Hence every $b$ with $(a_1,b),(a_2,b)$ non-defect lies in $B_s$; these are all $b\notin D_B$,
so $\abs{B_s}\ge n-\abs{D_B}\ge n-1$. But $\abs{B_s}=y_c\le h_n$ and $n-1>h_n$ for $n\ge 4$,
a contradiction. So $\abs{A_s}\le 1$.

Now assume $m\ge 3$ and suppose $\abs{B_s}\ge 2$, with $b_1,b_2\in B_s$ distinct. For any
$a\notin D_A$ with $a\notin A_s$, both $(a,b_1),(a,b_2)$ are non-defect, and the symmetric
argument produces a $6$-cycle $b_1-s-b_2-t_2-a-t_1-b_1$, impossible. Hence every $a\notin D_A$
lies in $A_s$, so $\abs{A_s}\ge m-\abs{D_A}\ge m-1\ge 2$, contradicting $\abs{A_s}\le 1$. So
$\abs{B_s}\le 1$.
\end{proof}

\begin{lemma}[collapse]\label{lem:collapse}
There is no orientable no-fold product structure, even or odd in the standing taiko sense, with $m,n\ge 2$ and $\girth(L_1)\ge 6$ in
which $\abs{A_s}\le 1$ and $\abs{B_s}\le 1$ for every signed color $s$.
\end{lemma}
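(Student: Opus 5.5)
The plan is to show that the collapsed regime forces every horizontal degree to be too large. The contradiction will come from the tiling identity (i) of the Structure Theorem combined with the degree bound of Lemma~\ref{lem:simple}.

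First I pin down the color profile. Since $\girth(L_1)\ge 6$, the Structure Theorem applies. By balance (iii), $\abs{A_{(c,\out)}}=x_c$ and $\abs{B_{(c,\out)}}=y_c$. For an active color both $x_c,y_c\ge 1$, so the hypothesis $\abs{A_s},\abs{B_s}\le 1$ forces $x_c=y_c=1$ for every active color. Thus every rectangle $A_s\times B_s$ is a single board position.

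Next I feed this into the tiling identity (i). For each $a\in A$ it reads
\[
\textstyle\sum_{s\ni a}\abs{B_s}=n-\one_{a\in D_A}.
\]
Every $\abs{B_s}=1$, so the left side is just $\abs{\Sigma(a)}$. By Lemma~\ref{lem:simple} this equals $\deg_{L_A}(a)$, which is at most $m-1$. Since $m\ge 2$ and $\abs{D_A}\le 1$, there is some $a\notin D_A$, and for it $n=\deg_{L_A}(a)\le m-1$. The dual identity, applied to some $b\notin D_B$ (which exists since $n\ge2$), gives $m=\deg_{L_B}(b)\le n-1$ in the same way. Together these yield $n\le m-1\le n-2$, a contradiction.

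I do not expect a real obstacle. The only points needing care are to use active colors so that $x_c,y_c\ge1$, and to pick a non-defect row and column so that the indicator vanishes. Both are immediate from the standing conventions.
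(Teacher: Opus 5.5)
Your proof is correct. It takes a mildly but genuinely different route from the paper's.

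The paper argues globally. Assuming $m\le n$, it reads the product identity (iv) as
\[
mn-\delta=\#\{s:\abs{A_s}=\abs{B_s}=1\}\le\#\{s:\abs{A_s}=1\}=2e_A\le 2\tbinom m2=m^2-m,
\]
and contradicts this with $mn-\delta\ge m^2-\delta>m^2-m$.

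You argue pointwise. You apply the tiling identity (i) at one non-defect row and one non-defect column, and you use the vertex bound $\abs{\Sigma(a)}=\deg_{L_A}(a)\le m-1$ instead of the edge bound $e_A\le\binom m2$. The paper's inequality is exactly your row inequality summed over $a\in A$: summing (i) over rows gives (iv), and summing degrees gives $2e_A$. So the two proofs have the same content, and yours is the unsummed form.

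What your version buys:
\begin{itemize}
\item You need no ordering of $m$ and $n$, because the symmetric pair $n\le m-1$, $m\le n-1$ closes at once.
\item Your row step only needs the inequality $\sum_{s\ni a}\abs{B_s}\le\abs{\Sigma(a)}$. So the detour through balance and activity to get $\abs{B_s}=1$ exactly is harmless but unnecessary.
\item Under $m\le n$, the row step alone already gives $n\le m-1<n$.
\end{itemize}

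What the paper's version buys is uniformity: it is the same "product identity against edge budget'' comparison that drives Theorem~\ref{thm:B}.
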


\begin{proof}
Take $m\le n$ without loss of generality. Each rectangle contains at most one board position, so by the
product identity
\[
\begin{aligned}
        mn-\delta=\sum_s\abs{A_s}\abs{B_s}
        &=\#\{s:\abs{A_s}=\abs{B_s}=1\}\\
        &\le\min\bigl(\,\#\{\abs{A_s}=1\},\ \#\{\abs{B_s}=1\}\,\bigr).
\end{aligned}
\]
Since $\abs{A_s}\le 1$, $\#\{\abs{A_s}=1\}=\sum_s\abs{A_s}=\sum_a\abs{\Sigma(a)}=2e_A$, and
likewise $\#\{\abs{B_s}=1\}=2e_B$. With $e_A\le\binom m2$ (Lemma~\ref{lem:simple}) and
$m\le n$,
\[
        mn-\delta\le\min(2e_A,2e_B)\le 2e_A\le 2\binom m2=m^2-m.
\]
But $n\ge m$ gives $mn-\delta\ge m^2-\delta>m^2-m$ since $m>\delta$, a contradiction.
\end{proof}

\begin{theorem}\label{thm:A}
No orientable no-fold product structure, even or odd in the standing taiko sense, with
$m,n\ge 2$ and $\girth(L_1)\ge 8$ exists. In particular the $(4,4)$ and $(3,6)$ branches of
$\mathsf T_4$ are empty, and every such orientable no-fold product structure has
$\girth(L_1)\le 6$.
\end{theorem}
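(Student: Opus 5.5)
Assume a structure with $m,n\ge2$ and $\girth(L_1)\ge 8$ exists; by symmetry take $m\le n$. Since $\girth(L_1)\ge8$ implies $\girth(L_1)\ge6$, Lemma~\ref{lem:m2} forces $m\ge3$. The plan splits by the size of $n$.

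\textbf{Main case $n\ge4$.} Here $3\le m\le n$ and $n\ge 4$, so Lemma~\ref{lem:thin} applies in full. It gives $\abs{A_s}\le1$ and $\abs{B_s}\le1$ for every signed color $s$. Lemma~\ref{lem:collapse} then excludes exactly this collapsed configuration, which is a contradiction.

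\textbf{Residual case $n\le3$.} With $m\le n$ and $m\ge3$, the only type left is $(3,3)$. Lemma~\ref{lem:threethree} excludes it under the hypothesis $\girth(L_1)\ge8$, via the subdivided-$K_4$ argument, which yields a $6$-cycle. This case analysis is the only step needing care: one must check that the hypotheses of Lemma~\ref{lem:thin} (the ordering $2\le m\le n$ with $n\ge4$, and $m\ge3$ for the $B$-side) are met in every case not already handled by Lemmas~\ref{lem:m2} and~\ref{lem:threethree}. The symmetry reduction to $m\le n$ is harmless because all hypotheses are symmetric in $A$ and $B$.

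\textbf{Consequences.} The $(4,4)$ and $(3,6)$ branches of $\mathsf T_4$ both require $\girth(L_1)\ge 8$, so they are empty. Since $L_1$ is bipartite, its girth is even (or $\infty$). Thus $\girth(L_1)\not\ge 8$ means $\girth(L_1)\le 6$ whenever $L_1$ has a cycle. If $L_1$ is a forest, its girth is $\infty\ge8$, which is excluded as well; so $\girth(L_1)\le6$ in all cases.
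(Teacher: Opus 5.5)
Your proposal is correct and follows the paper's proof essentially verbatim: reduce to $m\le n$, dispose of $m=2$ by Lemma~\ref{lem:m2}, apply Lemmas~\ref{lem:thin} and~\ref{lem:collapse} once $n\ge4$, and exclude $(3,3)$ via the $\girth(L_1)\ge8$ alternative of Lemma~\ref{lem:threethree}. Your closing deduction of $\girth(L_1)\le 6$ from bipartiteness and the forest convention is also the same as the paper's.
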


With the convention that an acyclic graph has girth $\infty$, the last assertion also says that
the middle link of such an orientable no-fold product structure is never a forest.

\begin{proof}
Take $m\le n$. If $m=2$, Lemma~\ref{lem:m2} applies. If $m\ge 3$ and $(m,n)\ne(3,3)$, then
$n\ge 4$ (since $n\ge m\ge 3$, and $n=3$ would force $m=3$). By Lemma~\ref{lem:thin} every
signed color satisfies $\abs{A_s}\le 1$ and $\abs{B_s}\le 1$, and Lemma~\ref{lem:collapse}
gives a contradiction. The remaining shape $(3,3)$ is excluded by Lemma~\ref{lem:threethree}
via its $\girth(L_1)\ge 8$ alternative. This proves the first assertion. The branches $(4,4)$
and $(3,6)$ have $\girth(L_1)\ge 8$ and are therefore empty; and since a structure with
$\girth(L_1)\ge 8$ cannot exist, every structure that does exist has $\girth(L_1)\le 6$
\textup{(}bipartiteness makes the girth even, and by convention a forest has girth $\infty\ge 8$\textup{)}.
\end{proof}

\begin{remark}
The collapse is the degenerate incidence structure in which every line carries at most one
point of each class; the product identity then counts the covered cross-pairs as the
doubly-occupied lines, of which there are too few because the horizontal graphs --- being
simple --- have at most $\binom m2$ and $\binom n2$ edges. No Moore bound is needed in this
regime. The reduction of the possible supports to a single shape $(3,3)$, and the elimination of
$(3,3)$ itself, both use only $\girth(L_1)\ge 8$ and the half-size bound; no horizontal-girth
hypothesis enters at all. Theorem~\ref{thm:A} is thus a statement about the middle link in
isolation: \emph{$\girth(L_1)\le 6$ for every such orientable no-fold product structure}, whatever
its horizontal graphs look like.
\end{remark}

\section{The \texorpdfstring{$(6,3)$}{(6,3)} branch: Theorem B}\label{sec:B}

Here $\girth(L_A),\girth(L_B),\girth(L_1)\ge 6$. By Proposition~\ref{prop:six} we may assume
$m,n\ge 6$; take $m\le n$. We use the product identity (iv), half-size (v), pressure (vi), and
the Moore bounds $e_A\le E_m$, $e_B\le E_n$ (Lemma~\ref{lem:moore}). The finite numerical
comparisons used below are displayed explicitly; all are elementary integer or polynomial
arithmetic, with no appeal to a computer search.

\subsection{The pressure--Fisher bound}

The proof rests on a single second-moment inequality: pressure bounds each $y_c$ by a convex
function of $x_c$, and the Fisher bound controls the second moment of the $x_c$. This is
sharper than bounding $y_c$ by its maximum, and it makes the obstruction non-marginal.

\begin{lemma}[pressure--Fisher bound]\label{lem:pf}
In the $(6,3)$ branch, with $m\le n$, let $E_m$ be the girth-six Moore bound of
Lemma~\ref{lem:moore}, $h_m=\lfloor\tfrac{m+1}2\rfloor$, and
\[
        R(x)=\frac{2E_m+\delta-x}{m-x}\qquad(1\le x\le h_m).
\]
Then, with $\mu=\dfrac{R(h_m)-R(1)}{h_m-1}$,
\[
        \frac{mn-\delta}{2}=\sum_c x_cy_c\ \le\ Q_{m,\delta}:=R(1)\,E_m+\mu\binom m2 ,
\]
where the dependence on $\delta\in\{0,1\}$ enters through $R$ \textup{(}and hence $R(1)$ and
$\mu$\textup{)}. We abbreviate $Q_{m,\delta}$ to $Q_m$ when $\delta$ is fixed by the context.
\end{lemma}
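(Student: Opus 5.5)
Start from the first pressure inequality (vi), $y_c(m-x_c)\le 2e_A-x_c+\delta$, and bound $e_A$ by the Moore bound of Lemma~\ref{lem:moore}. This is legitimate in the $(6,3)$ branch: $\girth(L_A)\ge 6$, and Proposition~\ref{prop:six} gives minimum degree at least $2$. Half-size (v) gives $x_c\le h_m<m$ for $m\ge 6$, so $m-x_c>0$ and we may divide:
\[
        y_c\le \frac{2E_m+\delta-x_c}{m-x_c}=R(x_c),\qquad 1\le x_c\le h_m .
\]
(Active colors have $x_c\ge1$.) Hence $\sum_c x_cy_c\le \sum_c x_cR(x_c)$, and by the product identity (iv) the left side is $(mn-\delta)/2$.

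Next we replace $R$ by its chord. Write $R(x)=-1+\frac{2E_m+\delta-m}{m-x}$. The numerator $2E_m+\delta-m$ is nonnegative, since minimum degree $2$ gives $e_A\ge m$ and hence $E_m\ge m$. Therefore $R$ is convex on $[1,h_m]$ and lies below the chord through $(1,R(1))$ and $(h_m,R(h_m))$:
\[
        R(x)\le R(1)+\mu(x-1).
\]
Multiplying by $x_c>0$ gives
\[
        x_cR(x_c)\le R(1)x_c+\mu\,x_c(x_c-1).
\]
Summing over $c$, use $\sum_c x_c=e_A\le E_m$, valid because $R(1)\ge0$. Use the Fisher bound (vii), $\sum_c x_c(x_c-1)\le\binom m2$, valid because $\mu\ge0$ ($R$ is increasing, its derivative being $(2E_m+\delta-m)/(m-x)^2\ge0$). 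This yields $\sum_c x_cy_c\le R(1)E_m+\mu\binom m2=Q_{m,\delta}$.

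The main point needing care is the sign bookkeeping. Convexity needs $2E_m+\delta\ge m$, and the two substitutions need $R(1)\ge0$ and $\mu\ge0$; all three follow from $e_A\ge m$. We also need the degenerate case $h_m=1$ excluded, where $\mu$ is undefined; this is automatic since $m\ge6$ by Proposition~\ref{prop:six}. We should also note that only the first pressure inequality and only the $A$-side Fisher bound are used, so the assumption $m\le n$ enters only through which side's Moore bound $E_m$ appears in $Q_m$.
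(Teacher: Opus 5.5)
This is correct and is essentially the paper's proof: pressure with $e_A\le E_m$ gives $y_c\le R(x_c)$, convexity of $R$ (from $E_m\ge e_A\ge m$ via Proposition~\ref{prop:six}) gives the chord bound, and summing against $\sum_c x_c\le E_m$ and the Fisher bound finishes. Your explicit checks $R(1)\ge 0$ and $\mu\ge 0$ spell out what the paper leaves implicit, and there is one harmless slip: $R(x)=1+\frac{2E_m+\delta-m}{m-x}$, not $-1+\cdots$, but the additive constant affects neither convexity, monotonicity, nor the positivity of $R(1)$.
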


\begin{proof}
Summing over active colors ($x_c\ge 1$), pressure~(vi) and $e_A\le E_m$ give
$y_c(m-x_c)\le 2e_A-x_c+\delta\le 2E_m+\delta-x_c$, that is $y_c\le R(x_c)$. Since
\[
        R'(x)=\frac{2E_m+\delta-m}{(m-x)^2}>0,
        \qquad
        R''(x)=\frac{2(2E_m+\delta-m)}{(m-x)^3}>0
\]
(as $2E_m\ge 2m>m$, since $L_A$ has minimum degree at least $2$ by Proposition~\ref{prop:six},
whence $e_A\ge m$ and $E_m\ge e_A\ge m$), the function $R$ is increasing and convex on
$[1,h_m]$, hence lies below its chord through $x=1$ and $x=h_m$: $R(x)\le R(1)+\mu(x-1)$.
Therefore $x_cy_c\le R(1)x_c+\mu\,x_c(x_c-1)$, and summing with $\sum_c x_c=e_A\le E_m$ and the
Fisher bound~(vii) $\sum_c x_c(x_c-1)\le\binom m2$ gives the claim.
\end{proof}

Because $n\ge m$ gives $\tfrac{mn-\delta}2\ge\tfrac{m^2-\delta}2$, Lemma~\ref{lem:pf} produces a
contradiction whenever $Q_m<\tfrac{m^2-\delta}2$. The next lemma shows this holds for all but
two shapes.

\begin{lemma}\label{lem:pfrange}
$Q_{m,\delta}<\dfrac{m^2-\delta}{2}$ for every even $m\ge 8$ \textup{(}where necessarily
$\delta=0$\textup{)} and every odd $m\ge 9$ \textup{(}for both $\delta=0$ and $\delta=1$\textup{)}.
\end{lemma}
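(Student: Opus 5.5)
The plan is to compute $Q_{m,\delta}$ in closed form as a function of $m$, write the square root as a single parameter, and reduce the comparison with $\tfrac{m^2-\delta}2$ to a polynomial inequality. That inequality will hold for all large $m$, and the finitely many remaining values of $m$ will be checked directly from the exact integer values of $E_m$ and $h_m$.

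\emph{Heuristic size.} Write $E:=E_m$, $h:=h_m$ and $t:=\sqrt{2m-3}$, so $m=(t^2+3)/2$ and $E\le\tfrac m4(1+t)$. Then
\[
R(1)=\frac{2E+\delta-1}{m-1},\qquad R(h)=\frac{2E+\delta-h}{m-h},\qquad \mu=\frac{R(h)-R(1)}{h-1}.
\]
Heuristically $E\approx m^{3/2}/(2\sqrt2)$, $R(1)\approx\sqrt{m/2}$, $R(h)\approx\sqrt{2m}$ and $\mu\approx\sqrt{2/m}$. Hence
\[
Q_m\approx \tfrac{m^2}{4}+\tfrac{m^{3/2}}{\sqrt2},
\]
which falls below $m^2/2$ with a margin of order $m^2/4$ once $m$ is moderately large. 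So the inequality is not marginal asymptotically; all the difficulty is in making it effective.

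\emph{Effective version.} Two monotonicity facts simplify the estimate.
\begin{itemize}
\item $Q_m$ is increasing in $E$. Indeed $R(1)$ and $R(h)$ increase in $E$, and the coefficient of $E$ in $\mu$ is $\tfrac{2}{h-1}\bigl(\tfrac1{m-h}-\tfrac1{m-1}\bigr)>0$. So I may replace $E$ by the unfloored value $\tfrac m4(1+t)$.
\item Using $h\le\tfrac{m+1}2$, so that $m-h\ge\tfrac{m-1}2$, and $h-1\ge\tfrac{m-2}2$, I bound $R(h)\le\tfrac{2(2E+\delta)}{m-1}$. The term $\delta\le1$ is absorbed as an $O(1)$ error.
\end{itemize}
Multiplying through by the positive denominators $(m-1)^2(m-2)$ turns $Q_m<\tfrac{m^2-\delta}2$ into a polynomial inequality $P(t)>0$, after substituting $m=(t^2+3)/2$. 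Its leading behaviour is of order $t^{10}$ with positive leading coefficient, coming from the $\tfrac{m^2}{4}$ gap. I would then locate a threshold $t_0$, and hence $M_0=(t_0^2+3)/2$, beyond which $P(t)>0$. This can be done by grouping terms, e.g.\ showing each lower-order negative term is dominated by a fraction of the leading term once $t\ge t_0$. I expect something like $M_0\approx 30$–$50$.

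\emph{Finite range.} For $8\le m<M_0$, separately for even and odd $m$, I compute $E_m$ exactly with the floor, together with $h_m$, $R(1)$, $R(h_m)$, $\mu$ and $Q_{m,\delta}$. For even $m$ only $\delta=0$ occurs; for odd $m$ both $\delta\in\{0,1\}$ must be checked. I then compare with $\tfrac{m^2-\delta}2$ in a displayed table. This is elementary rational arithmetic, in keeping with the section's promise of no computer search.

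\emph{Main obstacle.} The main obstacle is the bottom of the range, $m=8$, $9$, $10$. There the floors in $E_m$ and $h_m$ and the parity of $m$ matter, and the coarse bound $E\le\tfrac m4(1+t)$ loses too much. The lemma deliberately excludes $m=6$, $7$, which are presumably where the inequality fails. So for these $m$ I would use the exact integer $E_m$ rather than any estimate, and verify each inequality by hand. If an odd case with $\delta=1$ is tight, I would check that the $+\delta$ in the numerator of $R$ is offset by the $-\delta$ on the right-hand side before declaring the case closed.
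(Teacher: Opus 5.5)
Your plan is correct and is essentially the paper's proof: use monotonicity of $Q_{m,\delta}$ in $E$ to replace $E_m$ by $\tfrac m4(1+\sqrt{2m-3})$, substitute $m=(s^2+3)/2$ to turn the tail into a polynomial positivity check, and handle the small cases with an exact rational table using the floored $E_m$. The one difference is that your parity-uniform shortcuts $R(h_m)\le 2(2E+\delta)/(m-1)$ and $h_m-1\ge (m-2)/2$ raise $Q$ by roughly $m$, so by my estimate your tail inequality holds only from about $m=25$ on and your exact table must run to about $m=24$; the paper instead keeps the exact $h_m$ for each parity, starts the tail at $m=14$ (even) and $m=15$ (odd, where it is very tight), and gets odd $\delta=0$ from $\delta=1$ by monotonicity in $\delta$.
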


\begin{proof}
\emph{Bridge $8\le m\le 13$.} The integer Moore values are $E_m=9,10,12,14,16,18$ for
$m=8,\dots,13$. Direct evaluation gives, with $\delta=0$ for even $m$ and the binding value
$\delta=1$ for odd $m$,
\[
\begin{array}{c|cccccc}
 m & 8 & 9 & 10 & 11 & 12 & 13\\ \hline
 Q_m & \tfrac{223}{7} & \tfrac{77}{2} & \tfrac{134}{3} & 59 & \tfrac{716}{11} & 80\\[3pt]
 \tfrac{m^2-\delta}{2} & 32 & 40 & 50 & 60 & 72 & 84
\end{array}
\]
so $Q_m<\tfrac{m^2-\delta}2$ in every column.

\emph{Tail $m\ge 14$.} Here $E_m\le E_0:=\tfrac m4(1+s)$ with $s=\sqrt{2m-3}\ge 5$. Since $Q_m$
is increasing in $E_m$ (both $R(1)$ and $\mu$ are; indeed
$\partial R_E(x)/\partial E=2/(m-x)$ and $1/(m-x)$ is increasing in $x$, so the secant slope
increases with $E$; moreover $R_E(1)>0$, so $E\mapsto E R_E(1)$ is nondecreasing), it suffices to bound $Q_m$ with $E_m$ replaced by $E_0$. Writing $m=(s^2+3)/2$, the half-size is $h_m=m/2$ when $m$ is even and
$h_m=(m+1)/2$ when $m$ is odd. For even $m$ (hence $\delta=0$), a direct simplification gives
\[
        \frac{m^2}{2}-Q_m\big|_{E_0}
        =\frac{s^6-6s^5+11s^4-24s^3+35s^2-18s+33}{16(s^2+1)}.
\]
For odd $m$ and $\delta=1$, the corresponding simplification is
\[
        \frac{m^2-1}{2}-Q_m\big|_{E_0}
        =\frac{s^6-6s^5+11s^4-36s^3+15s^2-54s-11}{16(s^2+1)}.
\]
For the even $\delta=0$ case and $s\ge 5$, the substitution $s=t+5$ ($t\ge 0$) turns the numerator into
\[
        t^6+24t^5+236t^4+1196t^3+3200t^2+4032t+1568,
\]
a polynomial with positive coefficients, hence positive; so $Q_m<m^2/2$ for all even
$m\ge 14$. For the odd $\delta=1$ case, the numerator has derivative $6s^5-30s^4+44s^3-108s^2+30s-54$,
which under $s=t+5$ becomes $6t^5+120t^4+944t^3+3552t^2+6000t+2896$ and so is positive for
$s\ge 5$; the numerator is therefore increasing on $[5,\infty)$, and at $s=\sqrt{27}$ (that is,
$m=15$) it equals $28096-16200\sqrt3>0$, since $28096^2>3\cdot 16200^2$. Hence it is positive
for all $s\ge\sqrt{27}$, i.e.\ all odd $m\ge 15$.

Finally, for every odd $m$---in both the bridge and the tail---the case $\delta=0$ follows from
the case $\delta=1$ established above: $Q_{m,\delta}$ is increasing in $\delta$ (both $R(1)$ and
$\mu$ are), so $Q_{m,0}\le Q_{m,1}<\tfrac{m^2-1}2<\tfrac{m^2}2$.
\end{proof}

\begin{remark}[arithmetic reproducibility]
The bridge values in Lemma~\ref{lem:pfrange} are exact rational evaluations of the displayed
pressure--Fisher expression using the integer Moore bounds $E_m$; no numerical rounding enters.
The tail comparisons are polynomial positivity checks after the substitutions shown in the proof.
\end{remark}

\subsection{The two surviving shapes}

\begin{lemma}\label{lem:pfsurv}
There is no $(6,3)$-branch example with $m=6$ or $m=7$.
\end{lemma}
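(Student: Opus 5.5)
The plan is to show that for $m\in\{6,7\}$ the girth-six hypothesis forces $L_A$ to be a single cycle. The pressure inequality then bounds every $y_c$ by $3$, which caps $n$ and pins everything down to one rigid profile. The dual Fisher inequality (viii) kills that profile.

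\emph{Step 1: $L_A$ is a cycle.} By Proposition~\ref{prop:six}, $L_A$ has minimum degree at least $2$, so $e_A\ge m$. Lemma~\ref{lem:moore} gives $E_6=\lfloor\tfrac64(1+3)\rfloor=6$ and $E_7=\lfloor\tfrac74(1+\sqrt{11})\rfloor=7$. Hence $e_A=m$, and since every degree is at least $2$, every vertex of $L_A$ has degree exactly $2$. (Indeed $L_A\cong C_m$, but only $e_A=m$ and the degrees are used below.)

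\emph{Step 2: $y_c\le 3$ for every active color.} Pressure (vi) with $e_A=m$ gives $y_c(m-x_c)\le 2m+\delta-x_c$. The right-hand quotient $(2m+\delta-x)/(m-x)$ is increasing in $x$, so it suffices to check $x_c=h_m$:
\begin{itemize}[leftmargin=2.6em]
\item for $m=6$ (so $\delta=0$ and $x_c\le 3$) this gives $y_c\le 9/3=3$;
\item for $m=7$ (so $x_c\le 4$) it gives $y_c\le 11/3$, hence $y_c\le3$ by integrality.
\end{itemize}

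\emph{Step 3: reduce to the profile $(6,6)$ with two colors of type $(3,3)$.} By the product identity (iv), $\tfrac{mn-\delta}{2}=\sum_c x_cy_c\le 3\sum_c x_c=3e_A=3m$, so $mn-\delta\le 6m$. Combined with $n\ge m\ge 6$ this forces $n=6$, hence $m=6$ and $\delta=0$. Equality must then hold throughout, so $y_c=3$ for every active color.

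By symmetry, the same Steps 1--2 on the $B$-side give $e_B=6$, all $B$-degrees equal to $2$, and $x_c\le3$. Equality in $\sum_c x_cy_c=18=3\sum_c x_c$ only gives $y_c=3$ wherever $x_c\ge1$, which holds for every active color. Running the symmetric argument gives $x_c=3$ as well. With $\sum_c x_c=6$ there are exactly two active colors, so $r=4$ signed colors.

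\emph{Step 4: contradiction from dual Fisher.} Every one of the $12$ points of $A\sqcup B$ has degree $2$ in its horizontal graph. The dual Fisher bound (viii) then reads $\sum_a\binom22+\sum_b\binom22=12\le\binom42=6$, which is false.

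\emph{Possible obstacle.} If the symmetric step in Step 3 is awkward, note that dual Fisher alone already gives $12\le\binom r2$, so it suffices to bound $r$. This comes from $\sum_c x_c=6$ together with $y_c=3$, which, after the symmetric reasoning, yields exactly two colors.

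The step most likely to need care is Step 1: confirming $E_7=7$ exactly, and that Proposition~\ref{prop:six} (minimum degree at least $2$) applies to both sides. Once $e_A=m$ is secured, Steps 2--4 are short integer arithmetic.
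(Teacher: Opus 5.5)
Your proof is correct, and it differs from the paper's in how $n$ is capped. The paper opens both cases with the pressure--Fisher bound of Lemma~\ref{lem:pf}. It evaluates $Q_6=\tfrac{96}{5}$ to get $n=6$ when $m=6$, and uses $Q_7=\tfrac{70}{3}$ to kill $(m,\delta)=(7,0)$ and $Q_7=\tfrac{77}{3}$ to pin $n=7$ when $(m,\delta)=(7,1)$. Only after that does it apply the first-moment estimate $y_c\le R(h_7)=\tfrac{11}{3}$, hence $\sum_c x_cy_c\le 3E_7=21<24$. For $m=6$ it finishes with the same $C_6$, two-colors-of-type-$(3,3)$, dual-Fisher endgame you give.

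You observe instead that for $m\in\{6,7\}$ the Moore value meets the minimum-degree lower bound, $E_m=m$. With integer rounding, the one-sided ``strip'' bound $y_c\le R(h_m)$ then already gives $y_c\le 3$. The product identity then forces $n\le 6$ for every $n\ge m$ and both parities at once. This bypasses the convex-chord argument, the Fisher bound~(vii), and the rational evaluations of $Q_m$, and it treats $m=6$ and $m=7$ uniformly. It also shows that the paper's preliminary step pinning $n=7$ is unnecessary.

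The paper's route has the advantage of reusing the single lemma that drives every $m\ge 8$. Your shortcut leans on small-$m$ arithmetic, and the strip estimate alone stops sufficing for larger $m$: at $m=9$, $\delta=1$ it gives only $\sum_c x_cy_c\le 40=\tfrac{81-1}{2}$. That is why the paper needs the second-moment lemma anyway.

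Your derivation of $x_c=3$ from the symmetric equality $18=\sum_c x_cy_c\le 3e_B=18$ is a harmless variant of the paper's count. There, $y_c=3$ together with $\sum_c y_c=6$ gives two colors, and then $x_c=3$.
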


\begin{proof}
\emph{$m=6$ (so $\delta=0$).} Lemma~\ref{lem:pf} gives $3n=\tfrac{6n}{2}=\sum_c x_cy_c\le
Q_6=\tfrac{96}{5}$, so $n\le 6$ and hence $n=6$. Then $\girth(L_A),\girth(L_B)\ge 6$ with minimum degree
$2$ on six vertices forces $L_A=L_B=C_6$: indeed minimum degree at least $2$ gives
$e_A,e_B\ge6$, while Lemma~\ref{lem:moore} gives $e_A,e_B\le E_6=6$, so each graph is
$2$-regular on six vertices and, with girth at least $6$, is a $6$-cycle. Thus $e_A=e_B=6$ and $\sum_c x_cy_c=18$ with
$\sum_c x_c=\sum_c y_c=6$ and $x_c,y_c\le 3$. Equality in
$18=\sum_c x_cy_c\le 3\sum_c x_c=18$ first forces $y_c=3$ for every active color. Since
$\sum_c y_c=6$, there are exactly two active colors; then $\sum_c x_c=6$ and $x_c\le3$ force
$x_c=3$ for both of them. Thus $x_c=y_c=3$ for every active color, so there are exactly two colors
and four signed colors. Every vertex of $A\sqcup B$ then has degree $2$ in $L_1$, so
$\sum_v\binom{d_v}{2}=12$, which contradicts the dual Fisher bound~(viii)
$\sum_v\binom{d_v}{2}\le\binom 42=6$.

\emph{$m=7$.} Since $\delta\equiv mn\pmod 2$, both $\delta=0$ (with $n$ even) and $\delta=1$
(with $n$ odd) are a priori possible, and we treat both. If $\delta=0$, then Lemma~\ref{lem:pf}
gives $\tfrac{7n}{2}\le Q_7=\tfrac{70}{3}$, so $n\le\tfrac{20}{3}<7$, contradicting $n\ge m=7$.
If $\delta=1$, then $n$ is odd and Lemma~\ref{lem:pf} gives $\tfrac{7n-1}{2}\le Q_7=\tfrac{77}{3}$,
so $n\le 7$ and hence $n=7$. Pressure then gives $y_c\le R(h_7)=\tfrac{2E_7-4+1}{7-4}=\tfrac{11}{3}$,
so $y_c\le 3$ for every color, and $\sum_c x_cy_c\le 3\sum_c x_c\le 3E_7=21$, contradicting
$\sum_c x_cy_c=\tfrac{49-1}{2}=24$.
\end{proof}

\begin{theorem}\label{thm:B}
No orientable no-fold product structure, even or odd in the standing taiko sense, with
$m,n\ge 2$ and $\girth(L_A),\girth(L_B),\girth(L_1)\ge 6$ exists. Hence the $(6,3)$ branch of
$\mathsf T_4$ is empty.
\end{theorem}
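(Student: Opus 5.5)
The proof assembles the results of Sections~\ref{sec:small} and~\ref{sec:B}; no new estimate is needed. Suppose, for contradiction, that $\Pi$ is an orientable no-fold product structure in the standing taiko sense, with $m,n\ge2$ and $\girth(L_A),\girth(L_B),\girth(L_1)\ge6$. Fix the coherent no-fold orientation from the hypothesis, as in the fixed-orientation convention. Because $\girth(L_1)\ge6$, the Structure Theorem applies. Hence we have the product identity, half-size, pressure, Fisher and dual Fisher relations, and the Moore bounds $e_A\le E_m$, $e_B\le E_n$ of Lemma~\ref{lem:moore}. The last of these use the horizontal girth hypothesis together with minimum degree at least~$2$.

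\emph{Step 1 (small supports).} By Proposition~\ref{prop:six}, $L_A$ and $L_B$ have minimum degree at least $2$ and $m,n\ge6$. This step already absorbs Lemma~\ref{lem:m2} for $\min\{m,n\}=2$, the degree-one obstruction, and Lemma~\ref{lem:threethree} for type $(3,3)$. By the $A\leftrightarrow B$ symmetry of all hypotheses, we may assume $6\le m\le n$.

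\emph{Step 2 (generic shapes).} Lemma~\ref{lem:pf} gives $\tfrac{mn-\delta}2\le Q_{m,\delta}$. Since $n\ge m$, we also have $\tfrac{mn-\delta}2\ge\tfrac{m^2-\delta}2$. Lemma~\ref{lem:pfrange} gives $Q_{m,\delta}<\tfrac{m^2-\delta}2$ for every even $m\ge8$ and every odd $m\ge9$, for each parity-admissible $\delta$. This is a contradiction, so $m\in\{6,7\}$.

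\emph{Step 3 (surviving shapes).} Lemma~\ref{lem:pfsurv} excludes $m=6$ and $m=7$. For $m=6$, the argument forces $L_A=L_B=C_6$ with two colors and then violates dual Fisher. For $m=7$, it uses the pressure cap $y_c\le3$. Thus no example exists.

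\emph{Conclusion and main obstacle.} The $(6,3)$ branch of $\mathsf T_4$ demands exactly $\girth(L_{AB})\ge6$ and $\girth(L_1)\ge6$. Since $L_{AB}=L_A\sqcup L_B$ and girth of a disjoint union is the minimum over components, these hypotheses are exactly the ones just excluded, so the branch is empty. The only delicate point is bookkeeping rather than estimation. We must check that the parity of $\delta$ is handled in every case, which Lemma~\ref{lem:pfrange} covers by monotonicity in $\delta$ and Lemma~\ref{lem:pfsurv} covers explicitly for $m=7$. We must also check that the reduction to $m\le n$ is legitimate, which it is because every hypothesis is symmetric in $A$ and $B$.
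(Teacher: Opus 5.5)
Your proposal is correct and follows the paper's proof essentially verbatim. Proposition~\ref{prop:six} gives $m,n\ge 6$; Lemmas~\ref{lem:pf} and~\ref{lem:pfrange} give the contradiction $\tfrac{m^2-\delta}{2}\le\tfrac{mn-\delta}{2}\le Q_{m,\delta}<\tfrac{m^2-\delta}{2}$ for $m\ge 8$; and Lemma~\ref{lem:pfsurv} disposes of $m\in\{6,7\}$.

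The only point to fix is ordering. The Moore bounds $e_A\le E_m$, $e_B\le E_n$ that you list before Step~1 depend on the minimum-degree-two conclusion of Proposition~\ref{prop:six}, so they can only be invoked after that step.
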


\begin{proof}
By Proposition~\ref{prop:six}, any such structure has $m,n\ge 6$. Interchange $A$ and $B$ if
necessary and assume $m\le n$. For $m\ge 8$, Lemmas~\ref{lem:pf} and~\ref{lem:pfrange} give
$\tfrac{m^2-\delta}2\le\tfrac{mn-\delta}2\le Q_m<\tfrac{m^2-\delta}2$, a contradiction; the
remaining shapes $m\in\{6,7\}$ are excluded by Lemma~\ref{lem:pfsurv}.
\end{proof}

\begin{remark}
The engine of Theorem~\ref{thm:B} is the product identity weighed against the Fisher
inequalities. Bounding $y_c$ by its maximum $R(h_m)$ --- the one-sided ``strip'' estimate ---
uses only the first moment $\sum_c x_c\le E_m$ and is marginal on the diagonal; the convex
chord in Lemma~\ref{lem:pf} brings in the second moment through Fisher and is what makes the
obstruction strict, reducing the entire branch to two small shapes settled by the dual Fisher
inequality and pressure.
\end{remark}

\section{Synthesis, sharpness, and necessity}\label{sec:synthesis}

The two extremal theorems combine into a single structural statement, of which the
nonexistence of $\mathsf T_4$ taikos is a corollary.

\begin{corollary}[middle-link dichotomy]\label{cor:dichotomy}
Let $\Pi$ be an orientable no-fold product structure, even or odd in the standing taiko sense,
with $m,n\ge 2$. With the convention that forests have girth $\infty$,
$\girth(L_1)\in\{4,6\}$. Moreover, if $\girth(L_1)=6$ then $\girth(L_{AB})\le 5$; equivalently
$\girth(L_{AB})\ge 6$ forces $\girth(L_1)=4$.
\end{corollary}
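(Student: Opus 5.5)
The dichotomy is a direct combination of Theorems~\ref{thm:A} and~\ref{thm:B}, together with two elementary facts about $L_1$.

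First, $L_1$ is bipartite, since its parts are $A\sqcup B$ and the signed colors. Its girth is therefore even or $\infty$, and no cycle has length below $4$ because $L_1$ is simple (Lemma~\ref{lem:collapsed-middle}). Theorem~\ref{thm:A} excludes $\girth(L_1)\ge 8$, and under our convention this includes the forest case $\girth(L_1)=\infty$. Hence $\girth(L_1)\in\{4,6\}$.

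For the second assertion, suppose $\girth(L_1)=6$ and, aiming for a contradiction, $\girth(L_{AB})\ge 6$. Since $L_{AB}=L_A\sqcup L_B$ and the girth of a disjoint union is the minimum of the girths of its components, this gives $\girth(L_A),\girth(L_B)\ge 6$. Together with $\girth(L_1)\ge 6$ and $m,n\ge2$, this is exactly the configuration excluded by Theorem~\ref{thm:B}. So $\girth(L_{AB})\le 5$.

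The equivalent contrapositive form then follows from the first part. If $\girth(L_{AB})\ge 6$, then $\girth(L_1)\ne 6$, and since $\girth(L_1)\in\{4,6\}$ we get $\girth(L_1)=4$.

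There is no real obstacle here; all the work is in Theorems~\ref{thm:A} and~\ref{thm:B}. The only point needing care is the bookkeeping of conventions: $\infty$ must be treated as $\ge 8$, so that Theorem~\ref{thm:A} excludes forests, and the disjoint-union girth must be the minimum, so that the hypothesis on $L_{AB}$ transfers to both $L_A$ and $L_B$.
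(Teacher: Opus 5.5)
Your proposal is correct and follows the paper's proof essentially verbatim. You use bipartiteness plus Theorem~\ref{thm:A} (including the forest case) to get $\girth(L_1)\in\{4,6\}$, then the contrapositive of Theorem~\ref{thm:B} together with $\girth(L_{AB})=\min\{\girth(L_A),\girth(L_B)\}$ for the horizontal bound, and finally the contrapositive combined with the $\{4,6\}$ dichotomy for the last clause.
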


\begin{proof}
By the standing convention a forest has girth $\infty$, and $L_1$ is bipartite, so its girth is even or infinite. By Theorem~\ref{thm:A} it is
not $\ge 8$ (this also rules out a forest, whose girth is $\infty$), so $\girth(L_1)\in\{4,6\}$.
If $\girth(L_1)=6$ then $\girth(L_1)\ge 6$, so by the contrapositive of Theorem~\ref{thm:B} one
cannot also have $\girth(L_A),\girth(L_B)\ge 6$; thus $\girth(L_{AB})=\min\{\girth(L_A),
\girth(L_B)\}\le 5$. The last clause is the contrapositive.
\end{proof}

The dichotomy constrains \emph{every} orientable no-fold product structure, not only those a
construction would want; the nonexistence of $\mathsf T_4$ taikos is the special case in which
one demands more of the middle link than it can carry.

\begin{proof}[Proof of Theorem~\ref{thm:main}]
Each regime of $\mathsf T_4$ contradicts Corollary~\ref{cor:dichotomy}. The $(6,3)$ branch
demands $\girth(L_1)\ge 6$ and $\girth(L_{AB})\ge 6$; by the dichotomy the first forces
$\girth(L_1)=6$, hence $\girth(L_{AB})\le 5$, contradicting the second. The $(4,4)$ and $(3,6)$
branches demand $\girth(L_1)\ge 8$, contradicting $\girth(L_1)\le 6$ directly. In every case
$\mathsf T_4$ fails.
\end{proof}

\begin{proof}[Proof of Corollary~\ref{cor:route}]
We use the form of the taiko assembly in \cite[Theorem 16]{GargMineyev} (Theorem 27 of
\cite{MineyevOrigami}) for even/odd product structures satisfying the listed hypotheses, with the
horizontal links, middle link, orientability, no-fold condition, and triple-girth condition
identified in Section~\ref{sec:taiko}. That external theorem proves the relevant nondegeneracy
under these hypotheses. In that form, an even product structure satisfying
$\mathsf T_1,\mathsf T_2,\mathsf T_4$ yields a zero-divisor counterexample over $\F_2$, and an odd
one yields a unit counterexample; if the structure moreover admits a Garg--Mineyev signature, in
the sense of \cite[Theorem~16]{GargMineyev}, the counterexample can be taken over any ring with
unity. The present paper does not use the internal definition of signature; it is only the
additional hypothesis in the external assembly theorem needed for this arbitrary-ring upgrade.
Moreover, the form of \cite[Theorem~16]{GargMineyev} stated with orientability and triple-girth is
equivalent there to the orientation/no-fold/triple-girth formulation $(2')$, and
Lemmas~\ref{lem:collapsed-middle} and~\ref{lem:gm-dictionary} identify the horizontal links,
middle link, no-fold condition, and triple-girth graphs with the collapsed objects used in this
paper. Thus obstructing the latter
obstructs every assembly input covered by that theorem. By Theorem~\ref{thm:main} no such product
structure exists, so each of these routes is closed.
\end{proof}

\begin{remark}[both values $4$ and $6$ are attained]\label{rem:example}
The dichotomy is sharp at both ends. \emph{The value $4$:} take $A=\{a_1,a_2,a_3\}$,
$B=\{b_1,b_2,b_3\}$, defect $(a_3,b_3)$, and the four $2$-cells
\[
\{(a_1,b_1),(a_2,b_2)\},\quad\{(a_1,b_2),(a_2,b_3)\},
\]
\[
\{(a_1,b_3),(a_3,b_1)\},\quad\{(a_2,b_1),(a_3,b_2)\}.
\]
These partition $A\times B$ minus the defect, and each cell differs in both coordinates. Orient
the $A$-edges $a_1\!\to\!a_2$, $a_2\!\to\!a_3$ (color $c_1$) and $a_1\!\to\!a_3$ (color $c_2$);
orientability then forces the $B$-edges $b_1\!\to\!b_2$, $b_2\!\to\!b_3$ (color $c_1$) and
$b_3\!\to\!b_1$ (color $c_2$). Both $L_A$ and $L_B$ are triangles. The no-fold condition holds at every vertex: $a_1$ has
$(c_1,\out),(c_2,\out)$; $a_2$ has $(c_1,\inn),(c_1,\out)$; $a_3$ has
$(c_1,\inn),(c_2,\inn)$; $b_1$ has $(c_1,\out),(c_2,\inn)$; $b_2$ has
$(c_1,\inn),(c_1,\out)$; and $b_3$ has $(c_1,\inn),(c_2,\out)$. In particular one
computes $\Sigma(a_2)=\Sigma(b_2)=\{(c_1,\out),(c_1,\inn)\}$, a $4$-cycle. So this is a valid
orientable no-fold product structure with $\girth(L_1)=4$ and $\girth(L_{AB})=3$. (Consistently
with $\girth(L_1)<6$, the Structure Theorem does not apply: the color profile here is
$\{(2,2),(1,1)\}$, summing to $5\ne 4$, because the rectangles overlap.)

\emph{The value $6$:} this is realized by Example~10 of \cite{GargMineyev}, the even
$(4,4)$ product structure on $A=\{a_1,\dots,a_4\}$, $B=\{b_1,\dots,b_4\}$ with the eight cells
\[
\begin{array}{ll}
\{(a_1,b_1),(a_2,b_2)\}, & \{(a_1,b_2),(a_3,b_3)\},\\
\{(a_2,b_1),(a_3,b_2)\}, & \{(a_1,b_3),(a_4,b_4)\},\\
\{(a_2,b_3),(a_4,b_1)\}, & \{(a_1,b_4),(a_3,b_1)\},\\
\{(a_2,b_4),(a_4,b_2)\}, & \{(a_4,b_3),(a_3,b_4)\}.
\end{array}
\]
Here $L_A$ and $L_B$ are both the complete graph $K_4$, so $\girth(L_{AB})=3$. The cells fall
into four colors $O,G,P,B$; the cell-compatible orientation must be recorded: the
ordered pairs of \cite{GargMineyev} specify the cells but not the coherent orientation used here. With the
orientation
\[
\begin{array}{c|c|c}
\text{color} & A\text{-edges} & B\text{-edges}\\ \hline
O & a_1\!\to\!a_2,\ a_2\!\to\!a_3 & b_1\!\to\!b_2\\
G & a_1\!\to\!a_3 & b_2\!\to\!b_3,\ b_4\!\to\!b_1\\
P & a_2\!\to\!a_4 & b_3\!\to\!b_1,\ b_4\!\to\!b_2\\
B & a_1\!\to\!a_4,\ a_4\!\to\!a_3 & b_3\!\to\!b_4
\end{array}
\]
(which is forced by the cells once $O$ is oriented as shown), the structure is no-fold, and the
profile is $\{(2,1),(1,2),(2,1),(1,2)\}$, so the product identity reads
$2\sum_c x_cy_c=2\cdot 8=16=mn$. Writing $\out,\inn$ as subscripts $o,i$, the signed-color sets
are
\[
\begin{aligned}
\Sigma(a_1)&=\{O_o,G_o,B_o\}, & \Sigma(a_2)&=\{O_i,O_o,P_o\},\\
\Sigma(a_3)&=\{O_i,G_i,B_i\}, & \Sigma(a_4)&=\{P_i,B_o,B_i\},\\
\Sigma(b_1)&=\{O_o,G_i,P_i\}, & \Sigma(b_2)&=\{O_i,G_o,P_i\},\\
\Sigma(b_3)&=\{G_i,P_o,B_o\}, & \Sigma(b_4)&=\{G_o,P_o,B_i\}.
\end{aligned}
\]
The displayed lists also verify no-fold at every vertex: each incident signed color appears at most
once, equivalently no two incident horizontal graph edges have the same color and the same direction.
No two of these eight sets meet in two signed colors, so $L_1$ has no $4$-cycle; and
$a_1-O_o-a_2-O_i-b_2-G_o-a_1$ is a $6$-cycle. Hence $\girth(L_1)=6$ directly. Thus the value
$6$ occurs, in accordance with the dichotomy's bound $\girth(L_{AB})\le 5$ in that case;
equivalently, the no-repeated-pattern condition $\mathsf T_3$ is satisfiable. Both ends of
Corollary~\ref{cor:dichotomy} are therefore non-vacuous, and the bound $\halfgirth(L_1)\le 3$
of Theorem~\ref{thm:A} is optimal. The value $6$ is moreover not tied to a triangle in the
horizontal graphs: Theorem~\ref{thm:affine} realizes $\girth(L_1)=6$ with $L_A\cong L_B\cong
K_{q,q}$, hence with $\girth(L_{AB})=4$.
\end{remark}

\begin{remark}[sharpness]
The estimates driving Theorem~\ref{thm:B} are tight against incidence geometry. The Moore
bound $E_m$ is sharp --- equality holds for the Levi graphs of finite projective planes --- and
the pressure--Fisher bound of Lemma~\ref{lem:pf} is nearly an equality at $m=6$: first
Lemma~\ref{lem:pf} forces $n=6$, then the girth-six Moore bound together with minimum degree $2$
forces $L_A=L_B=C_6$, and the remaining profile argument forces two colors of type $(3,3)$. The
extremal incidence structures the product identity competes against are thus the high-girth
Moore graphs, typified by projective-plane Levi graphs in the $(6,3)$ regime and by the
single-cell degeneracy of Theorem~\ref{thm:A} otherwise; the theorem asserts that an orientable
taiko can never reach that extreme. The marginality is genuine: bounding each $y_c$ by its
first-moment maximum gives only $n\lesssim m$, and it is the Fisher second moment in
Lemma~\ref{lem:pf} that turns the near-diagonal squeeze into a strict contradiction.
\end{remark}

\begin{remark}[necessity of pressure]\label{rem:necessary}
The pressure inequalities (vi) cannot be dropped: the product identity (iv) together with the
half-size bound (v) and the Moore bounds $e_A\le E_m$, $e_B\le E_n$ does \emph{not} by itself
yield a contradiction. Concretely, on the diagonal $m=n$ the bare constraints
$2\sum_c x_cy_c=mn-\delta$, $x_c,y_c\le h_m$, $\sum_c x_c\le E_m$, $\sum_c y_c\le E_m$ admit a
feasible assignment of color masses (ignoring whether it is realized by an actual structure):
for $m=n=2r$ even, take two colors with $(x_c,y_c)=(r,r)$, giving $\sum_c x_cy_c=2r^2=m^2/2$;
for $m=n=2r+1$ odd, take colors $(r+1,r)$ and $(r,r+1)$, giving
$\sum_c x_cy_c=2r(r+1)=(m^2-1)/2$. Both meet the identity exactly while respecting half-size and
$E_m\ge m$ (valid for $m\ge 6$). What forbids the construction is the finer content of (vi): a
color with $x_c\approx m/2$ is forced to be light on the other side, $y_c\lesssim\sqrt{2m}$,
rather than the $h_n\approx n/2$ a naive count would allow. Lemma~\ref{lem:pf} is precisely the
exploitation of this two-sided smallness: the convex chord converts the per-color pressure bound
into a global second-moment inequality that the product identity cannot satisfy.
\end{remark}

\begin{remark}[self-containment]
The proof of Theorem~\ref{thm:main} uses no computer search and no enumeration of product
structures. The only small-support inputs are short structural lemmas: the shapes with a class of
size at most three are eliminated by Lemmas~\ref{lem:m2}--\ref{lem:threethree} and
Proposition~\ref{prop:six}; all larger shapes are eliminated by the structural arguments of
Sections~\ref{sec:A} and~\ref{sec:B}. In particular, within the standing range
$m,n\ge2$, the prior exclusions of \cite{GargMineyev} for $2\le\min\{m,n\}\le5$ and for
$2\le m,n\le 13$ are recovered as special cases; the one-coordinate boundary cases are precisely
the degenerate cases excluded by the standing convention above. We draw from
\cite{GargMineyev} only the framework of Section~\ref{sec:taiko}---the definition of a product
structure, the conditions $\mathsf T_1$--$\mathsf T_4$, the assembly statement that a structure
meeting them yields a zero divisor in the even case or a nontrivial unit in the odd case over
$\F_2$, and the elementary incidence facts recalled there, each cited at its point of use; the
girth analysis of Sections~\ref{sec:structure}--\ref{sec:affine} is proved here and reuses none of
their exclusion arguments.
\end{remark}

\begin{remark}[coefficient rings]\label{rem:rings}
The dichotomy and Theorem~\ref{thm:main} are statements of pure combinatorics, with no field
appearing. Thus the obstruction to the same orientable no-fold $\mathsf T_4$ product-structure
data is coefficient-independent at the level of taikos. Applying the Garg--Mineyev assembly theorem
over a chosen coefficient ring still requires the corresponding \emph{signature} hypothesis, in the
sense of \cite[Theorem 16]{GargMineyev}; we use that term here only as the named additional
hypothesis in the external theorem. With that qualification, such a product structure yields a
counterexample over $\F_2$, and, if it admits the required signature, over \emph{any} ring with unity;
since Theorem~\ref{thm:main} shows no such product structure exists, every assembly theorem whose
input is this same data is obstructed independently of coefficients.
This does not rule out essentially different constructions --- ones not presented by an orientable
no-fold product structure, for instance a characteristic-sensitive assembly using cells whose sizes
track the additive combinatorics of the coefficients rather than the relation $1+1=0$ encoded by a
size-two cell. Our result obstructs this product-structure route; it says nothing about constructions
of a different shape.
\end{remark}

\section{Affine planes and the girth frontier}\label{sec:affine}

The dichotomy bounds the middle link of every such orientable no-fold product structure, and the
value $\girth(L_1)=6$ does occur (Remark~\ref{rem:example}). We now show that it occurs even
when the horizontal graphs are as far from carrying a triangle as the dichotomy allows: there
is an infinite affine-plane family with $\girth(L_1)=6$ whose horizontal graphs are complete
bipartite, hence triangle-free of girth four. This is the lower construction matching the
obstruction; together with the closure proved at the end of this section
(Theorem~\ref{thm:p5}) it pins the horizontal frontier at $p^\ast=5$.

\begin{theorem}[affine-plane sharpness]\label{thm:affine}
Let $q\ge 4$ be a power of $2$. There is an even orientable no-fold product structure of type
$(2q,2q)$ with
\[
        \girth(L_1)=6,\qquad L_A\cong L_B\cong K_{q,q}.
\]
In particular $\halfgirth(L_1)=3$ and $\girth(L_{AB})=4$, realizing the frontier pair $(4,3)$.
\end{theorem}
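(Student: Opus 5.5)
The plan is an explicit construction from the affine plane $\mathrm{AG}(2,q)$ over $\F_q$, with $q=2^k\ge4$. Since $q+1\ge5$, we can choose five distinct parallel classes (directions) $\pi_0,\pi_1,\pi_2,\pi_3,\pi_v$ and a nonzero vector $v$ in direction $\pi_v$. For a point $p$ write $\ell_i(p)$ for the line of $\pi_i$ through $p$. Set $A=A_0\sqcup A_1$ and $B=B_0\sqcup B_1$, where $A_i$ is the set of $q$ lines of $\pi_i$ ($i=0,1$) and $B_j$ is the set of lines of $\pi_{2+j}$. Thus $m=n=2q$.

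The colors are the orbits $\{p,p+v\}$ of the translation $\tau:p\mapsto p+v$. This is a fixed-point-free involution precisely because $2v=0$ in characteristic two. For each color, fix a representative $p$ as the $\out$ point and $p+v$ as the $\inn$ point. Define
\[
\varphi(p)=\bigl(\ell_0(p),\ell_1(p+v)\bigr)\in A_0\times A_1,\qquad
\psi(p)=\bigl(\ell_2(p),\ell_3(p+v)\bigr)\in B_0\times B_1 .
\]
Since $\pi_0\ne\pi_1$, $\varphi$ is a bijection from the $q^2$ points onto $A_0\times A_1$, and similarly $\psi$ onto $B_0\times B_1$. For a color $\{p,p+v\}$ with $\out$ point $p$, the $A$-edges are $\ell_0(p)\to\ell_1(p+v)$ (the pair $\varphi(p)$) and $\ell_1(p)\to\ell_0(p+v)$ (the pair $\varphi(p+v)$, using $\tau^2=\mathrm{id}$), and dually on $B$. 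Every pair in $A_0\times A_1$ is then used exactly once, so $L_A\cong K_{q,q}$ is simple with each edge carrying a single direction, and likewise $L_B\cong K_{q,q}$; hence $\girth(L_{AB})=4$. The $2$-cells of this color pair each position $(\ell_i(p),\ell_{2+j}(p))$ with $(\ell_{1-i}(p+v),\ell_{3-j}(p+v))$ for $i,j\in\{0,1\}$. These cells satisfy the disjoint-vertex condition, since the two positions lie in different halves $A_i$ versus $A_{1-i}$ and $B_j$ versus $B_{1-j}$.

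The verification proceeds in the following order.
\begin{enumerate}[label=\textup{(\arabic*)},leftmargin=2.6em]
\item \emph{Partition.} Each position $(a,b)$ with $a\in A_i$, $b\in B_j$ is a pair of lines from distinct classes, so the two lines meet in a unique point $p$. Thus each position is covered exactly once, as the $\out$ or $\inn$ end of a unique cell. The structure is even, with $mn=4q^2=4\cdot q^2$.
\item \emph{Coherent orientation.} In every cell the $A$-edge and the $B$-edge both run from the $p$-end to the $(p+v)$-end, so the orientation is cell-compatible.
\item \emph{Colors.} The occurrence-graph component generated by these cells is exactly the four edges $\varphi(p),\varphi(p+v),\psi(p),\psi(p+v)$. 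No edge recurs elsewhere, because $\varphi$ and $\psi$ are bijections.
\item \emph{No-fold.} At a line $L\in A_0$, the outgoing edges are $\varphi(p)$ over the $\out$ points $p\in L$. Two such edges could share a color only if both $p$ and $p+v$ lie on $L$, i.e.\ only if $v\in\pi_0$, which the choice of $\pi_v$ excludes. The same argument applies to incoming edges and to the classes $\pi_1,\pi_2,\pi_3$.
\item \emph{Middle link.} With these choices, the signed colors are exactly the points of $\mathrm{AG}(2,q)$, and $\Sigma(L)$ is the set of points on the line $L$. Hence $L_1$ is the point--line incidence graph of $\mathrm{AG}(2,q)$ restricted to the four classes $\pi_0,\dots,\pi_3$. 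Two lines share at most one point, so $L_1$ has no $4$-cycle. Three pairwise non-parallel lines taken from $\pi_0,\pi_1,\pi_2$, chosen not concurrent, give a $6$-cycle, so $\girth(L_1)=6$.
\end{enumerate}

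The main obstacle is the bookkeeping in steps (3) and (4). One must check that the Garg--Mineyev color relation, generated by the cells together with the identification of repeated endpoint pairs, does not merge distinct orbits $\{p,p+v\}$. One must also check that the chosen $\out$ representative per orbit yields the signed-color identification in (5) and not a shifted one. The first check uses the injectivity of $\varphi$ and $\psi$. For the second, I would verify directly that the tails of color-$\{p,p+v\}$ $A$-edges are $\ell_0(p)$ and $\ell_1(p)$, i.e.\ exactly the lines through $p$, while the heads are exactly the lines through $p+v$. This is where both hypotheses enter: characteristic two makes $\tau$ an involution, and $q\ge4$ supplies a fifth direction for $v$.
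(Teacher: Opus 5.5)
Your proposal is correct and follows essentially the same route as the paper. You use the same four parallel classes of $\mathrm{AG}(2,q)$ split two-and-two between $A$ and $B$, the same cells pairing $(\ell_i(p),\ell_j(p))$ with $(\ell_{\bar i}(p+v),\ell_{\bar j}(p+v))$ over the orbits of $p\mapsto p+v$, the same orientation from the $p$-end to the $(p+v)$-end, and the same identification of $L_1$ with the restricted point--line incidence graph; your bijection $\varphi$ is a repackaging of the paper's intersection argument $L\cap(M-v)$, used both for $L_A\cong K_{q,q}$ and for showing that distinct orbits give distinct colors.
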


\begin{proof}
Work in the affine plane $\mathrm{AG}(2,q)$ over $\F=\F_q$ \cite{HughesPiper}: its points are the
vectors of $\F^2$, and its lines fall into $q+1$ parallel classes (\emph{directions}), one for each
slope in $\F\cup\{\infty\}$, each class consisting of $q$ disjoint lines that cover all $q^2$ points.
Since $q\ge 4$ there are at least five directions; fix four distinct ones
$\alpha,\beta,\gamma,\omega$, and write $\ell_\rho(P)$ for the unique line of direction $\rho$
through a point $P$. Put
\[
        A=\{\text{lines of direction }\alpha\text{ or }\beta\},\qquad
        B=\{\text{lines of direction }\gamma\text{ or }\omega\},
\]
so $\abs A=\abs B=2q$, and set $\bar\alpha=\beta$, $\bar\beta=\alpha$, $\bar\gamma=\omega$,
$\bar\omega=\gamma$.

Fix a nonzero $v\in\F^2$ whose direction is none of $\alpha,\beta,\gamma,\omega$ (possible since
$q\ge4$ leaves at least one further direction); then $P$ and $P+v$ lie on distinct lines of each
of the four selected directions. As $q$ is even, $\tau(P)=P+v$ is a fixed-point-free involution of
$\F^2$ (because $2v=0$ and $v\ne0$); this is the only point in the construction where
characteristic two is used. Let $R$ be a transversal of its orbits $\{P,P+v\}$, so
$\abs R=q^2/2$. For each $P\in R$, each $i\in\{\alpha,\beta\}$, and each $j\in\{\gamma,\omega\}$
form the $2$-cell
\[
        \kappa(P,i,j)=\bigl\{\,(\ell_i(P),\,\ell_j(P)),\ (\ell_{\bar i}(P+v),\,\ell_{\bar j}(P+v))\,\bigr\}.
\]
There are $\abs R\cdot 4=2q^2$ such cells, half of $\abs{A\times B}=4q^2$ (Figure~\ref{fig:affine}).

\emph{Partition and disjoint vertices.} Let $(L,M)\in A\times B$, with $L$ of direction
$i\in\{\alpha,\beta\}$ and $M$ of direction $j\in\{\gamma,\omega\}$. Since $i\ne j$, the lines
$L,M$ meet in a unique point $Q$, and $L=\ell_i(Q)$, $M=\ell_j(Q)$. If $Q\in R$, then $(L,M)$ is
the first coordinate of $\kappa(Q,i,j)$; if $Q\notin R$, then $Q+v\in R$, and since $(Q+v)+v=Q$
and the bar operation is an involution, $(L,M)$ is the second coordinate of $\kappa(Q+v,\bar i,\bar j)$. The
point $Q$ and the directions $i,j$ are recovered from $(L,M)$, so this occurrence is unique;
hence the cells partition $A\times B$. The two $A$-lines of a cell have directions $i\ne\bar i$
and the two $B$-lines directions $j\ne\bar j$, so each cell differs in both coordinates.

\emph{Horizontal graphs.} The cell $\kappa(P,i,j)$ induces the $A$-edge
$\{\ell_i(P),\ell_{\bar i}(P+v)\}$ and the $B$-edge $\{\ell_j(P),\ell_{\bar j}(P+v)\}$; thus
every horizontal edge joins an $\alpha$-line to a $\beta$-line (on the $A$-side) or a
$\gamma$-line to an $\omega$-line (on the $B$-side), with no edge inside a direction class.
For an affine line $N$ and vector $u$, write $N-u=\{X-u:X\in N\}$. Conversely, let $L$ be an $\alpha$-line and $M$ a $\beta$-line. The translate $M-v$ is again a
$\beta$-line, so $L\cap(M-v)$ is a single point $P_0$; then $P_0\in L$ and $P_0+v\in M$. If
$P_0\in R$, the cells with representative $P_0$ supply the unordered edge
$\{L,M\}=\{\ell_\alpha(P_0),\ell_\beta(P_0+v)\}$; if $P_0\notin R$, the cells with representative
$P_0+v$ supply the same unordered edge. Hence every
$\alpha$-line is joined to every $\beta$-line, so $L_A\cong K_{q,q}$, and the directions
$\gamma,\omega$ give $L_B\cong K_{q,q}$ identically. The $2q^2$ cells contribute $q^2$ distinct
$A$-edges: for fixed $P$ the two choices of $j\in\{\gamma,\omega\}$ induce the same $A$-edge
$\{\ell_i(P),\ell_{\bar i}(P+v)\}$, while distinct $(P,i)$ give distinct edges by the
intersection count just made (and the two $A$-lines of a fixed $P$ differ, since $P$ and $P+v$
lie on distinct $\alpha$-lines). These $q^2$ edges are exactly $\abs{E(K_{q,q})}$; so each edge
of the simple graph $L_A$ arises from a cell, and $\girth(L_{AB})=4$ as $q\ge 2$.

\emph{Colors, orientation, no-fold.} For $P\in R$ the four cells $\kappa(P,\cdot,\cdot)$ link
the two $A$-edges $\{\ell_\alpha(P),\ell_\beta(P+v)\}$, $\{\ell_\beta(P),\ell_\alpha(P+v)\}$ to
the two $B$-edges in the complete bipartite pattern (each of the two $A$-edges shares a cell with
each of the two $B$-edges), so the taiko equivalence relation of Section~\ref{sec:taiko} merges
these four horizontal edges into a single class: the derived color $c_P$ of all four cells, with
$x_{c_P}=y_{c_P}=2$. Distinct representatives give distinct classes: an unordered $A$-edge
between an $\alpha$-line $L$ and a $\beta$-line $M$ determines the orbit
$\{L\cap(M-v),\,M\cap(L-v)\}=\{Q,Q+v\}$, and hence its unique representative in $R$; the same
argument applies to the $B$-edges. Therefore no generator of the color equivalence relation coming
from a cell with representative $P$ shares a horizontal edge with a cell of representative
$P'\ne P$. Thus the $q^2/2$ classes partition the edges of $L_A$ and of $L_B$ (whence
$2\sum_c x_cy_c=4q^2=mn$). Orient the color-$c_P$ edges by $\ell_i(P)\to\ell_{\bar i}(P+v)$ on the $A$-side and
$\ell_j(P)\to\ell_{\bar j}(P+v)$ on the $B$-side. This is well defined because the classes are
edge-disjoint, and in each cell $\kappa(P,i,j)$ the first coordinate is a pair of tails and the
second a pair of heads; the structure is therefore orientable. For no-fold, fix $c_P$: on the
$A$-side its tails are the two $A$-lines through $P$ and its heads the two $A$-lines through
$P+v$, so a line is a tail of at most one $c_P$-edge and a head of at most one, and no line is
two tails or two heads. Since distinct representatives give distinct colors, no-fold only forbids
interactions among edges of the fixed color $c_P$ just checked; the $B$-side is identical.

\emph{Middle link.} Identify signed colors with affine points as follows. For each representative
$P\in R$, assign $(c_P,\out)$ to the point $P$ and $(c_P,\inn)$ to the point $P+v$. Thus every
point $Q\in\F^2$ labels exactly one signed color: if $Q\in R$ it labels $(c_Q,\out)$, while if
$Q\notin R$ it labels $(c_{Q+v},\inn)$, since $Q+v\in R$. The lines incident in $L_1$ to the
signed color labelled by $Q$ are precisely
$\ell_\alpha(Q),\ell_\beta(Q),\ell_\gamma(Q),\ell_\omega(Q)$, the four selected lines through
$Q$. Under this identification, the signed-color side of $L_1$ is the affine-point side, while
$A\sqcup B$ is the selected-line side. Thus $L_1$ is the point--line incidence graph of
$\mathrm{AG}(2,q)$ restricted to the lines of directions $\alpha,\beta,\gamma,\omega$. Two affine points lie on at most one
common line and two affine lines meet in at most one point, so $L_1$ has no $4$-cycle and
$\girth(L_1)\ge 6$. It has a $6$-cycle: let $O$ be the origin, choose $P_1\in\ell_\alpha(O)$
with $P_1\ne O$, set $L_\gamma=\ell_\gamma(P_1)$ and $P_2=\ell_\beta(O)\cap L_\gamma$. The three
selected lines $\ell_\alpha(O),\ell_\beta(O),L_\gamma$ meet pairwise in the points $O,P_1,P_2$,
which are distinct (the line through any two of them has direction $\alpha$, $\beta$, or
$\gamma$, while the third point lies off that line), so
\[
        O-\ell_\alpha(O)-P_1-L_\gamma-P_2-\ell_\beta(O)-O
\]
is a $6$-cycle in $L_1$. Hence $\girth(L_1)=6$.

The structure is even, orientable, and no-fold, with $\girth(L_1)=6$ and
$L_A\cong L_B\cong K_{q,q}$; thus $\halfgirth(L_1)=3$ and $\girth(L_{AB})=4$.
\end{proof}

\begin{figure}[t]
\centering
\begin{tikzpicture}[scale=1.15,line cap=round]
  \coordinate (P) at (0,0);
  \coordinate (Q) at (2.5,0.95);
  \draw[blue!62,thick] (-1.15,0) -- (1.5,0);
  \draw[blue!62,thick] (1.65,-0.20) -- (3.0,1.63);
  \draw[red!68,thick] (-1.0,0.85) -- (1.0,-0.85);
  \draw[red!68,thick] (1.15,0.45) -- (3.45,1.30);
  \draw[->,very thick,black!75] (P) -- (Q) node[pos=0.45,above,fill=white,inner sep=1pt]{$v$};
  \fill (P) circle (1.7pt) node[below=2.5pt]{$P$};
  \fill (Q) circle (1.7pt) node[below=3pt]{$P+v$};
  \node[blue!62,font=\scriptsize,anchor=east] at (-1.15,0) {$\ell_\alpha(P)$};
  \node[red!68,font=\scriptsize,anchor=east] at (-1.05,0.9) {$\ell_\gamma(P)$};
  \node[blue!62,font=\scriptsize,anchor=south west] at (3.0,1.63) {$\ell_\beta(P{+}v)$};
  \node[red!68,font=\scriptsize,anchor=north west] at (3.45,1.30) {$\ell_\omega(P{+}v)$};
\end{tikzpicture}
\caption{The cell $\kappa(P,\alpha,\gamma)$ of the affine construction. The signed color $c_P$ is
indexed by the size-two orbit $\{P,P+v\}$ of the fixed-point-free involution $\tau(P)=P+v$, an
involution exactly because $2v=0$, i.e.\ $\operatorname{char}\F=2$. Its two $A$-lines
$\ell_\alpha(P),\ell_\beta(P+v)$ (directions $\alpha,\beta$) form an edge of $L_A$, and its two
$B$-lines $\ell_\gamma(P),\ell_\omega(P+v)$ an edge of $L_B$; the four cells $\kappa(P,\cdot,\cdot)$
assemble into the color class $c_P$.}
\label{fig:affine}
\end{figure}

\begin{remark}[on the characteristic]
The hypothesis that $q$ is a power of two enters only through the fixed-point-free involution
$\tau(P)=P+v$: as $2v=0$ and $v\ne0$, it pairs the $q^2$ affine points into the $q^2/2$ size-two
orbits $\{P,P+v\}$ that index the cells and the balanced signed colors $(c_P,\out)=P$,
$(c_P,\inn)=P+v$ with $x_c=y_c=2$. Over a plane of odd order the map $P\mapsto P+v$ has orbits of
size $\operatorname{char}\F>2$, so this pairing has no direct analogue. Whether the pair
$(\girth(L_{AB}),\halfgirth(L_1))=(4,3)$ is realized over odd-order planes by another mechanism we
leave open; it does not affect the frontier value, which Theorem~\ref{thm:p5} fixes at
$p^\ast=5$ regardless.
\end{remark}

The construction completes the picture of the horizontal frontier left by the dichotomy.

\begin{corollary}[the horizontal frontier]\label{cor:frontier}
Let
\[
\begin{aligned}
        p^\ast=\min\{\,p\in\mathbb Z_{\ge3}:&\ \nexists\,\Pi\text{ orientable no-fold, in the standing sense, with}\\
        &\ \girth(L_{AB}(\Pi))\ge p\text{ and }\halfgirth(L_1(\Pi))\ge 3\,\}.
\end{aligned}
\]
Then $p^\ast\in\{5,6\}$.
\end{corollary}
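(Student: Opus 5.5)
The plan is to bound $p^\ast$ from both sides. For the upper bound $p^\ast\le6$, we show the set in the definition contains $6$. Take $p=6$. Any $\Pi$ with $\girth(L_{AB})\ge6$ and $\halfgirth(L_1)\ge3$ has $\girth(L_A),\girth(L_B),\girth(L_1)\ge6$, because $\girth(L_{AB})$ is the minimum of the two horizontal girths and $\halfgirth(L_1)\ge3$ means $\girth(L_1)\ge6$. Theorem~\ref{thm:B} says no orientable no-fold structure in the standing sense has these three girths at least six. Equivalently, Corollary~\ref{cor:dichotomy} forces $\girth(L_1)=6$ and hence $\girth(L_{AB})\le5$. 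So no such $\Pi$ exists and $6$ lies in the set.

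For the lower bound $p^\ast\ge5$, we show that $p=3$ and $p=4$ are not in the set by exhibiting witnesses. For $p=4$, take the structure of Theorem~\ref{thm:affine} with $q=4$. It is an even, orientable, no-fold structure of type $(8,8)$ with $\girth(L_{AB})=4\ge4$ and $\halfgirth(L_1)=3$, so a structure with these properties does exist. For $p=3$ the same witness works, since $\girth(L_{AB})=4\ge3$. Alternatively, Garg--Mineyev's Example~10 from Remark~\ref{rem:example} gives $\girth(L_{AB})=3$ and $\girth(L_1)=6$.

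It remains to note that the set is upward closed. If no structure exists with $\girth(L_{AB})\ge p$, then none exists with $\girth(L_{AB})\ge p'$ for any $p'\ge p$, since the condition only gets stronger. Hence the minimum is well defined, it is at most $6$, and it is not $3$ or $4$, so $p^\ast\in\{5,6\}$.

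There is no real obstacle here. The only point needing care is checking that the witnesses satisfy the standing convention: they must be even or odd, with $m,n\ge2$, and carry a fixed coherent no-fold orientation. The affine structure of Theorem~\ref{thm:affine} is even with $m=n=2q\ge8$, and its orientation is given explicitly in that proof. The value $5$ is left undecided at this stage, because it needs the later closure result, Theorem~\ref{thm:p5}.
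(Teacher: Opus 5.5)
Your proof is correct and follows the paper's argument: Theorem~\ref{thm:B} puts $6$ in the set, and the affine-plane structure of Theorem~\ref{thm:affine} shows that $4$, and hence also $3$, are not in it, so $p^\ast\in\{5,6\}$. Your upward-closure remark is harmless but not needed, because knowing that $6$ lies in the set while $3$ and $4$ do not already determines that the minimum is $5$ or $6$.
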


\begin{proof}
Theorem~\ref{thm:affine} realizes $\girth(L_{AB})=4$ with $\halfgirth(L_1)=3$, so the
obstruction fails at $p=4$, giving $p^\ast\ge 5$. Theorem~\ref{thm:B} rules out
$\girth(L_{AB})\ge 6$ together with $\halfgirth(L_1)\ge 3$ (the $(6,3)$ branch), so
$p^\ast\le 6$.
\end{proof}

\begin{lemma}[Moore bound, girth five]\label{lem:moore5}
For a positive integer $r$, write $M(r)=\lfloor\tfrac r2\sqrt{r-1}\rfloor$. Every finite simple graph of girth at least $5$
on $r$ vertices has at most $M(r)$ edges.
\end{lemma}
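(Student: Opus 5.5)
The plan is the standard double count of paths of length two, closed off by Cauchy--Schwarz. Let $G$ have $r$ vertices, $e$ edges and degrees $d_v$. First I will count the \emph{cherries}: unordered pairs $\{u,w\}$ of distinct vertices together with a common neighbour $v$. There are exactly $\sum_v\binom{d_v}2$ of them.

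Next I will bound this count using girth at least $5$. Since $G$ has no $4$-cycle, two distinct vertices $u,w$ have at most one common neighbour. Otherwise $u-v-w-v'-u$ would be a $4$-cycle, and $u,v,w,v'$ are distinct because $G$ is simple. So the map from cherries to their endpoint pairs $\{u,w\}$ is injective. Since $G$ has no triangle, an adjacent pair $\{u,w\}$ has no common neighbour at all. Hence the endpoint pairs are distinct non-adjacent pairs, and
\[
        \sum_v\binom{d_v}2\le\binom r2-e .
\]

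Then I will convert this into a second-moment bound. Multiplying by $2$ and using $\sum_v d_v=2e$ gives
\[
        \sum_v d_v^2-2e\le r(r-1)-2e,\qquad\text{that is,}\qquad \sum_v d_v^2\le r(r-1).
\]
The two $-2e$ terms cancel; this cancellation is the one point to check carefully. By Cauchy--Schwarz, $(2e)^2=\bigl(\sum_v d_v\bigr)^2\le r\sum_v d_v^2\le r^2(r-1)$. Therefore $e\le\tfrac r2\sqrt{r-1}$, and since $e$ is an integer, $e\le M(r)$.

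I do not expect a real obstacle here. The only care needed is in the injectivity step, where both halves of the girth hypothesis are used: the absence of $4$-cycles makes the endpoint map injective, and the absence of triangles removes the adjacent pairs from the target. Without triangle-freeness one gets only $\sum_v\binom{d_v}2\le\binom r2$, which leaves an extra $+2e$ on the right and a slightly weaker bound. Unlike Lemma~\ref{lem:moore}, no minimum-degree hypothesis is needed, because Cauchy--Schwarz applies to any degree sequence.
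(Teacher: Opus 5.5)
Your proof is correct, and it takes a different route from the paper's. The paper does not count cherries. It quotes the irregular Moore bound of Alon--Hoory--Linial, which for girth at least $5$ and average degree $\bar d\ge 2$ gives $r\ge 1+\bar d^{\,2}$, hence $e=\tfrac r2\bar d\le\tfrac r2\sqrt{r-1}$. Because that theorem requires $\bar d\ge 2$, the paper has to treat $\bar d<2$ separately (there $e\le r-1$) and check that $M(r)\ge r-1$ for $r\ge 2$.

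Your double count is self-contained. Triangle-freeness removes the adjacent pairs, so the two $-2e$ terms cancel and leave $\sum_v d_v^2\le r(r-1)$. Cauchy--Schwarz then gives the same inequality $\bar d\le\sqrt{r-1}$ in one step, with no degree hypothesis, no case split, and no external citation. At girth five this elementary count already matches the irregular Moore bound, so nothing is lost. Along the way you also get the slightly stronger second-moment statement $\sum_v d_v^2\le r(r-1)$. The paper's route gains only uniformity of presentation with Lemma~\ref{lem:moore}, which rests on the same Alon--Hoory--Linial theorem.
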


\begin{proof}
For $r=1$ the claim is trivial. Assume $r\ge2$, and write $\bar d=2|E|/r$ for the average degree.
If $\bar d\ge 2$, the irregular Moore bound of Alon,
Hoory, and Linial \cite{AlonHooryLinial} applies: a graph of girth at least $5$ and average degree
$\bar d\ge 2$ has at least $1+\bar d^{\,2}$ vertices, so $\bar d\le\sqrt{r-1}$, and
$|E|=\tfrac r2\bar d\le\tfrac r2\sqrt{r-1}$, whence $|E|\le M(r)$. If instead $\bar d<2$, then
$|E|<r$, so $|E|\le r-1\le M(r)$, the last step because $M(r)=\lfloor\tfrac r2\sqrt{r-1}\rfloor\ge
r-1$ for every $r\ge 2$. Either way $|E|\le M(r)$, with no hypothesis on the minimum degree.
\end{proof}

\begin{proposition}[near-Moore reduction at the frontier]\label{prop:nearmoore}
Let $(\Pi_i)$ be any sequence of orientable no-fold product structures, even or odd in the
standing taiko sense, with $\girth(L_1)\ge 6$ and
$\girth(L_A),\girth(L_B)\ge 5$. Write the type of $\Pi_i$ as $(m_i,n_i)$ and, after interchanging
sides if necessary for each $i$, assume $m_i\le n_i$. If $m_i\to\infty$, then, writing
$m=m_i$ and $n=n_i$ along the sequence,
\[
\begin{gathered}
        n_i=(1+o(1))m_i,\qquad e_A(\Pi_i)=(\tfrac12+o(1))m_i^{3/2},\\
        e_B(\Pi_i)=(\tfrac12+o(1))n_i^{3/2}.
\end{gathered}
\]
Thus any unbounded family realizing the intermediate pair $(5,3)$ is asymptotically diagonal,
with both horizontal graphs Moore-extremal for girth five.
\end{proposition}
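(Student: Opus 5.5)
The plan is to rerun the pressure--Fisher argument of Lemma~\ref{lem:pf}, keeping the actual edge counts $e_A,e_B$ as free parameters instead of replacing them by a Moore value. The girth-five Moore bound (Lemma~\ref{lem:moore5}) will then be used only at the very end, to close the squeeze. Since $\girth(L_1)\ge 6$, the Structure Theorem applies, so we have:
\begin{itemize}
\item the product identity (iv);
\item the half-size bound (v);
\item pressure (vi);
\item the Fisher bounds (vii).
\end{itemize}
All estimates below are asymptotic as $m\to\infty$. The constants hidden in $O(\cdot)$ are absolute, so they are uniform along the sequence, and the choice of orientation and side plays no role.

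\textbf{Step 1: a one-sided bound in terms of $e_A$.} For each active color, pressure gives $y_c\le R(x_c)$, where $R(x)=(2e_A+\delta-x)/(m-x)$ on $[1,h_m]$.
\begin{itemize}
\item If $2e_A+\delta\le m$, then $R\le 1$ on this interval. Hence $\sum_c x_cy_c\le e_A\le m$, which contradicts $\sum_c x_cy_c=(mn-\delta)/2\ge (m^2-1)/2$ for large $m$.
\item So for large $m$ we may assume $2e_A+\delta>m$. Then $R$ is increasing and convex, and the chord argument of Lemma~\ref{lem:pf} gives
\[
\tfrac{mn-\delta}{2}\le R(1)\,e_A+\mu\tbinom m2 .
\]
\end{itemize}
Direct estimation gives $R(1)\le(2e_A+1)/(m-1)$ and $\mu\le (2e_A+1)/((m-h_m)(m-1))=(4+o(1))e_A/m^2$. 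Hence
\[
\tfrac{mn}{2}\le \tfrac{2e_A^2}{m}+O(e_A)+O(1).
\]

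\textbf{Step 2: squeeze with the Moore bound.} Lemma~\ref{lem:moore5} gives $e_A\le M(m)\le \tfrac12 m^{3/2}$, so $O(e_A)=O(m^{3/2})=o(m^2)$. Dividing by $m^2/2$ yields
\[
\frac nm\le \frac{4e_A^2}{m^3}+o(1)\le 1+o(1).
\]
Since $n\ge m$, this forces $n=(1+o(1))m$. Rereading the same inequality using $n\ge m$ gives $4e_A^2/m^3\ge 1-o(1)$. Together with the Moore upper bound this yields $e_A=(\tfrac12+o(1))m^{3/2}$.

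\textbf{Step 3: the $B$-side.} Repeat Steps 1--2 with $A$ and $B$ interchanged, using:
\begin{itemize}
\item the second pressure inequality $x_c(n-y_c)\le 2e_B-y_c+\delta$;
\item the $B$-Fisher bound $\sum_c y_c(y_c-1)\le\binom n2$;
\item the half-size bound $y_c\le h_n$.
\end{itemize}
This gives $\tfrac{mn}{2}\le 2e_B^2/n+O(n^{3/2})$, so $m/n\le 4e_B^2/n^3+o(1)$. Since Step 2 already gives $m/n\to1$, we get $e_B\ge(\tfrac12-o(1))n^{3/2}$. The Moore bound $e_B\le M(n)$ supplies the matching upper bound.

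\textbf{Main obstacle.} The only delicate point is the uniform control of the chord slope $\mu$ and of the lower-order terms ($\delta$, the $-x$ in the numerator of $R$, and $h_m$ versus $m/2$). The argument needs the second-moment term to contribute $(2+o(1))e_A$, which is $o(m^2)$, rather than something of order $m^2$. To verify this, I would write out $R(h_m)-R(1)$ exactly as
\[
(2e_A+\delta)\Bigl(\frac1{m-h_m}-\frac1{m-1}\Bigr)\;-\;\Bigl(\frac{h_m}{m-h_m}-\frac1{m-1}\Bigr),
\]
and drop the nonpositive second term. The degenerate non-convex regime is already disposed of by the case split in Step 1.
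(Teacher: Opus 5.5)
Your argument is correct, but it is not the route the paper takes.

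\textbf{The paper's route.} The paper never invokes Fisher here. It bootstraps the pressure inequalities alone:
\begin{enumerate}
\item $m-x_c\ge (m-1)/2$ gives the crude width bound $y_c\le(2+o(1))\sqrt m$, hence $n=O(m)$;
\item the dual pressure inequality then gives $x_c\le(1+o(1))\sqrt n=o(m)$;
\item $A$-side pressure, now with denominator $(1-o(1))m$, gives $y_c\le(1+o(1))\sqrt m$;
\item only then is the product identity applied, in the first-moment form $\tfrac{mn-\delta}{2}\le (\max_c y_c)\, e_A$.
\end{enumerate}

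\textbf{Your route.} You feed the actual budget $e_A$ into the convex-chord pressure--Fisher inequality of Lemma~\ref{lem:pf}. Your case split on $2e_A+\delta\le m$ replaces the minimum-degree hypothesis used there. This collapses everything into the single estimate $n/m\le 4e_A^2/m^3+O(m^{-1/2})$. It is essentially the girth-five pressure--Fisher bound that the paper itself uses later, in the effective tail of Proposition~\ref{prop:reduce53}, to obtain $n<m+2\sqrt m$. Your computation also makes visible that this bound is exactly marginal at the girth-five Moore value $\tfrac12 m^{3/2}$. Your treatment of the chord slope (dropping the nonpositive term) is right, as is the $B$-side symmetrization using $m/n\to1$.

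\textbf{What each buys.} Your route is shorter and quantitative: it yields $n\le m+O(\sqrt m)$ and $e_A\ge(\tfrac12-O(m^{-1/2}))m^{3/2}$ directly. The paper's bootstrap produces a by-product you do not obtain: the uniform width bound $\max(x_c,y_c)\le(1+o(1))\sqrt m$. The paper states this right after the proposition, and Theorem~\ref{thm:asymp53} uses it as $W\le(1+o(1))\sqrt m$.

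That bound is not part of the statement you were asked to prove. If your proof replaced the paper's, though, you would need to append it. With $n=(1+o(1))m$ already in hand, this is just the three-step pressure iteration described above (steps 1--3).
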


\begin{proof}
We suppress the index $i$ throughout the proof; all asymptotic notation refers to the given
sequence. By the girth-five Moore bound, Lemma~\ref{lem:moore5},
\[
        e_A\le M(m)\le \tfrac m2\sqrt{m-1},\qquad
        e_B\le M(n)\le \tfrac n2\sqrt{n-1}.
\]
Write $\widetilde M_m=\tfrac m2\sqrt{m-1}$ and $\widetilde M_n=\tfrac n2\sqrt{n-1}$.

The pressure inequality~(vi) gives $y_c(m-x_c)\le 2e_A-x_c+\delta\le 2e_A+\delta$, and since
$x_c\le h_m\le\tfrac{m+1}2$ we have $m-x_c\ge\tfrac{m-1}2$, so
\[
        y_c\le\frac{2(2e_A+\delta)}{m-1}\le\frac{4\widetilde M_m+2\delta}{m-1}=(2+o(1))\sqrt m
        \qquad(\text{all }c).
\]
Thus
\[
        \frac{mn-\delta}{2}=\sum_c x_cy_c
        \le (2+o(1))\sqrt m\sum_c x_c
        =(2+o(1))\sqrt m\,e_A
        \le (1+o(1))m^2,
\]
which gives $n\le(2+o(1))m$; since $n\ge m$, we have $n=O(m)$.

Now use the dual pressure inequality with this crude diagonal control. Because $y_c=O(\sqrt m)=o(n)$,
\[
        x_c(n-y_c)\le 2e_B-y_c+\delta\le 2e_B+\delta
\]
gives
\[
        x_c\le \frac{2e_B+\delta}{n-y_c}
        \le \frac{2\widetilde M_n+\delta}{(1-o(1))n}=(1+o(1))\sqrt n=O(\sqrt m)=o(m).
\]
Consequently $m-x_c=(1-o(1))m$. Reapplying pressure on the $A$-side with this sharper denominator,
\[
        y_c\le\frac{2e_A+\delta}{m-x_c}
        \le\frac{2\widetilde M_m+\delta}{(1-o(1))m}=(1+o(1))\sqrt m
        \qquad(\text{all }c).
\]
The product identity then gives
\[
        \frac{mn-\delta}{2}=\sum_c x_cy_c
        \le(1+o(1))\sqrt m\,e_A
        \le(\tfrac12+o(1))m^2,
\]
so $n\le(1+o(1))m$; together with $n\ge m$ this is $n=(1+o(1))m$. The same displayed inequality,
now read as a lower bound on $e_A$, gives
$e_A\ge(\tfrac12-o(1))m^{3/2}$, and the Moore upper bound gives
$e_A=(\tfrac12+o(1))m^{3/2}$.

It remains only to justify the symmetric sharp lower bound for $e_B$, rather than assume it. We
already know $y_c=O(\sqrt m)=o(n)$, and the dual pressure estimate above sharpens, using
$e_B\le M(n)\le \widetilde M_n$, to
\[
        x_c\le \frac{2e_B+\delta}{n-y_c}
        \le \frac{2\widetilde M_n+\delta}{(1-o(1))n}=(1+o(1))\sqrt n
        \qquad(\text{all }c).
\]
Therefore
\[
        \frac{mn-\delta}{2}=\sum_c x_cy_c
        \le(1+o(1))\sqrt n\sum_c y_c
        =(1+o(1))\sqrt n\,e_B.
\]
Since $m=(1+o(1))n$, the left side is $(\tfrac12+o(1))n^2$, whence
$e_B\ge(\tfrac12-o(1))n^{3/2}$; the Moore upper bound gives
$e_B=(\tfrac12+o(1))n^{3/2}$. Combining the sharp bounds for $x_c$ and $y_c$ also gives the
uniform width bound claimed after the proposition.
\end{proof}

The proof of Proposition~\ref{prop:nearmoore} also yields $x_c,y_c\le(1+o(1))\sqrt m$ uniformly,
hence $W:=\max_s\max(|A_s|,|B_s|)\le(1+o(1))\sqrt m$. This last bound is the lever that upgrades
the reduction to an outright nonexistence statement. The mechanism is a \emph{weighted} form of
the dual Fisher inequality, in which each pair of signed colors is charged by the sizes on the
side where it does \emph{not} meet.

All sums indexed by $\{s,t\}$ below are over unordered two-element subsets of distinct signed
colors.
\begin{lemma}[weighted dual Fisher]\label{lem:wdf}
For any orientable no-fold product structure, even or odd in the standing taiko sense, with
$\girth(L_1)\ge 6$, writing $\alpha_s=|A_s|$ and
$\beta_s=|B_s|$ over the signed colors $s$,
\begin{multline*}
        \tfrac12\Bigl(mn^2-\delta(2n-1)-\sum_s\alpha_s\beta_s^2\Bigr)
       +\tfrac12\Bigl(nm^2-\delta(2m-1)-\sum_s\beta_s\alpha_s^2\Bigr)\\
        \le\ \sum_{\{s,t\}}\max(\alpha_s\alpha_t,\ \beta_s\beta_t).
\end{multline*}
\end{lemma}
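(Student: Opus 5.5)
The plan is to prove the inequality by double counting pairs of signed colors according to the point at which they meet, charging each meeting pair by the sizes on the side opposite to that point. The only inputs are the tiling identity~(i) and near-disjointness~(ii) of Theorem~\ref{thm:structure}, both available because $\girth(L_1)\ge 6$.

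\emph{Step 1: the $A$-side sum.} Fix $a\in A$. By (i), $\sum_{s\ni a}\beta_s=n-\one_{a\in D_A}$. Expanding the square of this sum gives
\[
\sum_{\{s,t\}:\,s,t\ni a}\beta_s\beta_t=\tfrac12\Bigl((n-\one_{a\in D_A})^2-\sum_{s\ni a}\beta_s^2\Bigr).
\]
Now sum over $a\in A$. Since $(n-1)^2=n^2-(2n-1)$ and exactly $\delta$ vertices lie in $D_A$, the first terms total $mn^2-\delta(2n-1)$. Exchanging summation gives $\sum_a\sum_{s\ni a}\beta_s^2=\sum_s\alpha_s\beta_s^2$. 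Hence the $A$-side total is exactly the first bracket $\tfrac12\bigl(mn^2-\delta(2n-1)-\sum_s\alpha_s\beta_s^2\bigr)$ on the left of the statement.

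\emph{Step 2: the $B$-side sum.} The same computation, now using the dual tiling $\sum_{s\ni b}\alpha_s=m-\one_{b\in D_B}$, gives
\[
\sum_{b\in B}\ \sum_{\{s,t\}:\,s,t\ni b}\alpha_s\alpha_t=\tfrac12\Bigl(nm^2-\delta(2m-1)-\sum_s\beta_s\alpha_s^2\Bigr).
\]

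\emph{Step 3: comparison with the right side.} The left side of the lemma is therefore a sum over pairs (point $v$, unordered pair $\{s,t\}$ of distinct signed colors through $v$). The weight is $\beta_s\beta_t$ if $v\in A$ and $\alpha_s\alpha_t$ if $v\in B$.

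Regroup this sum by the pair $\{s,t\}$. By near-disjointness~(ii), $\abs{A_s\cap A_t}+\abs{B_s\cap B_t}\le1$, so each pair $\{s,t\}$ occurs for at most one point $v$.
\begin{enumerate}[label=\textup{(\alph*)},leftmargin=2.6em,itemsep=1pt]
\item If the common point lies in $A$, the pair contributes $\beta_s\beta_t\le\max(\alpha_s\alpha_t,\beta_s\beta_t)$.
\item If it lies in $B$, the pair contributes $\alpha_s\alpha_t\le\max(\alpha_s\alpha_t,\beta_s\beta_t)$.
\item If the pair does not meet, it contributes $0$, which is at most the nonnegative maximum.
\end{enumerate}
Summing over all unordered pairs yields the stated inequality.

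I expect no serious obstacle here. The only point needing care is the defect bookkeeping in Step 1, namely that the tiling sum at $a_0$ is $n-1$ rather than $n$, which produces the $-\delta(2n-1)$ term. That is exactly what Lemma~\ref{lem:cross} and (i) supply.
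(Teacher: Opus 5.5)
Your proof is correct and follows the paper's argument essentially step for step. The paper likewise expands $\bigl(\sum_{s\in\Sigma(a)}\beta_s\bigr)^2$ via the tiling identity at each $a\in A$ (and dually at each $b\in B$), uses the same defect bookkeeping $\sum_a(n-\one_{a\in D_A})^2=mn^2-\delta(2n-1)$, and then regroups the total as $\sum_{\{s,t\}}\bigl(\beta_s\beta_t\abs{A_s\cap A_t}+\alpha_s\alpha_t\abs{B_s\cap B_t}\bigr)$, where near-disjointness charges each pair at most once.
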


\begin{proof}
Fix $a\in A$. By the rectangle partition in the Structure Theorem, equivalently by the proof of
the tiling identity~(i), the sets $B_s$ for $s\in\Sigma(a)$ partition $B$ minus the defect. Hence
$\sum_{s\in\Sigma(a)}\beta_s=n-\one_{a\in D_A}$ and
\[
        \sum_{\{s,t\}\subset\Sigma(a)}\beta_s\beta_t
        =\tfrac12\Bigl((n-\one_{a\in D_A})^2-\sum_{s\in\Sigma(a)}\beta_s^2\Bigr).
\]
Summing over $a\in A$ and using $\sum_a(n-\one_{a\in D_A})^2=mn^2-\delta(2n-1)$ together with
$\sum_a\sum_{s\in\Sigma(a)}\beta_s^2=\sum_s\alpha_s\beta_s^2$ gives the first term on the left;
the $B$-side gives the second. Reorganizing the same sum by pairs, the left side equals
\[
        \sum_{\{s,t\}}\bigl(\beta_s\beta_t\,|A_s\cap A_t|+\alpha_s\alpha_t\,|B_s\cap B_t|\bigr).
\]
By near-disjointness~(ii) each pair has $|A_s\cap A_t|+|B_s\cap B_t|\le 1$, so contributes at
most one of $\beta_s\beta_t,\ \alpha_s\alpha_t$, in either case at most
$\max(\alpha_s\alpha_t,\beta_s\beta_t)$.
\end{proof}

Ordinary dual Fisher charges every pair the same; the weighted form charges a pair
$\{s,t\}$ only by $\max(\alpha_s\alpha_t,\beta_s\beta_t)$, so a light signed color
($\alpha_s,\beta_s$ small) barely inflates the right-hand side. In the near-Moore regime the
left-hand side is forced to be twice as large as the right, which is impossible.

\begin{theorem}[asymptotic closure of the $(5,3)$ frontier]\label{thm:asymp53}
There is an absolute constant $M_0$ such that every orientable no-fold product structure, even or
odd in the standing taiko sense, with $\girth(L_1)\ge 6$ and $\girth(L_A),\girth(L_B)\ge 5$ has $\min\{m,n\}\le M_0$. For each fixed
value of $\min\{m,n\}$ the pressure inequality bounds $\max\{m,n\}$, so only finitely many types
occur; in particular no infinite family realizes the pair $(5,3)$.
\end{theorem}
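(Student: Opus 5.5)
The plan is to argue by contradiction along a sequence. Suppose no such $M_0$ exists. Then there are orientable no-fold structures $\Pi_i$ in the standing sense with $\girth(L_1)\ge 6$, $\girth(L_A),\girth(L_B)\ge 5$, and (after swapping sides) $m_i\le n_i$ with $m_i\to\infty$. Proposition~\ref{prop:nearmoore} applies to this sequence and gives
\[
n=(1+o(1))m,\qquad e_A=(\tfrac12+o(1))m^{3/2},\qquad e_B=(\tfrac12+o(1))n^{3/2}.
\]
It also gives the uniform width bound $W=\max_s\max(\alpha_s,\beta_s)\le(1+o(1))\sqrt m$, with $\alpha_s=|A_s|$ and $\beta_s=|B_s|$ as in Lemma~\ref{lem:wdf}. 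I would then evaluate both sides of the weighted dual Fisher inequality (Lemma~\ref{lem:wdf}) to leading order in $m^3$, and show that the left side is asymptotically about twice the right side.

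\emph{Left side.} The correction terms are negligible. By the product identity~(iv),
\[
\sum_s\alpha_s\beta_s^2\le W\sum_s\alpha_s\beta_s=W(mn-\delta)=O(m^{5/2})=o(m^3),
\]
and $\sum_s\beta_s\alpha_s^2$ is bounded the same way. The $\delta$-terms are $O(m)$. Hence the left side equals $\tfrac12(mn^2+nm^2)(1+o(1))=(1+o(1))m^3$.

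\emph{Right side.} The crude estimate $\max(\alpha_s\alpha_t,\beta_s\beta_t)\le\alpha_s\alpha_t+\beta_s\beta_t$ is not enough. Summing it gives about $\tfrac12\bigl((2e_A)^2+(2e_B)^2\bigr)\approx m^3$, which only ties the left side. I would instead bound each pair by $W^2$ and control the number $r$ of signed colors.

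\emph{Counting signed colors.} This is the step I expect to be the main obstacle: showing $r\le(1+o(1))m$. The idea is that near-Moore extremality forces almost every signed color to be essentially a $\sqrt m\times\sqrt m$ square. Equality is nearly attained in the chain
\[
\tfrac12(mn-\delta)=\sum_c x_cy_c\le\bigl(\max_c y_c\bigr)\,e_A\le(1+o(1))\sqrt m\cdot\tfrac12 m^{3/2}.
\]
So the $x_c$-weighted proportion of colors with $y_c\le(1-\eta)\sqrt m$ tends to $0$ for each fixed $\eta>0$, and symmetrically on the $B$-side. The colors with both $x_c,y_c\ge(1-\eta)\sqrt m$ number at most $\tfrac12(mn-\delta)/\bigl((1-\eta)^2m\bigr)=(1+O(\eta))\,n/2$ colors, i.e.\ $(1+O(\eta))n$ signed colors.

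The light colors must be handled separately, since a possibly large number of them could spoil the count. For these I would not use $W^2$ but keep the weighted charge. A light signed color $s$ has $\min(\alpha_s,\beta_s)\le(1-\eta)\sqrt m$. Its total charge $\sum_t\max(\alpha_s\alpha_t,\beta_s\beta_t)$ is at most $(\alpha_s+\beta_s)\cdot 2W\max(e_A,e_B)$. Summing over light colors, this is bounded by $o(m^{3/2})\cdot O(m^{3/2})=o(m^3)$, because their total mass $\sum(\alpha_s+\beta_s)$ is an $o(1)$ fraction of $2(e_A+e_B)$. If this bookkeeping is too lossy, the fallback is to split the pair sum into heavy--heavy, heavy--light and light--light pairs and bound each class separately.

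\emph{Conclusion.} Granting the count, the right side is at most
\[
\binom{(1+O(\eta))m}{2}(1+o(1))m+o(m^3)=(\tfrac12+O(\eta)+o(1))\,m^3,
\]
while the left side is $(1+o(1))m^3$. Taking $\eta$ small gives a contradiction for large $m$, so $\min\{m,n\}\le M_0$. For the remaining clause of Theorem~\ref{thm:asymp53}, fix $m=\min\{m,n\}$. Pressure~(vi), together with $x_c\le h_m$ and the girth-five bound $e_A\le M(m)$ of Lemma~\ref{lem:moore5}, bounds every $y_c$ by $2(2M(m)+1)/(m-1)$. The product identity then gives
\[
\tfrac12(mn-\delta)\le\frac{2(2M(m)+1)}{m-1}\,M(m),
\]
which bounds $n$ in terms of $m$. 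Hence only finitely many types can occur.
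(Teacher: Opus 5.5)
\textbf{Verdict: correct, but by a different route from the paper's, with two slips to fix.}

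\emph{Comparison with the paper.} You bound the right side of Lemma~\ref{lem:wdf} in a different way. The paper does no counting and no heavy/light split. It uses $\max(\alpha_s\alpha_t,\beta_s\beta_t)\le w_sw_t$ with $w_s=\max(\alpha_s,\beta_s)$, so the right side is at most $\tfrac12(\sum_s w_s)^2$. It then writes $\sum_s w_s=2e_A+2e_B-\sum_s\min(\alpha_s,\beta_s)$. Since $\min(\alpha_s,\beta_s)=\alpha_s\beta_s/w_s\ge\alpha_s\beta_s/W$, this gives $\sum_s\min(\alpha_s,\beta_s)\ge(mn-\delta)/W\ge(1-o(1))m^{3/2}$. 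Hence $\sum_s w_s\le(1+o(1))m^{3/2}$, and the right side is at most $(\tfrac12+o(1))m^3$, in one line. Your argument is a stability version of the same phenomenon: near-equality in $\tfrac12(mn-\delta)\le\bar y\,e_A$ pushes almost all mass onto near-square colors. You count the heavy colors through the product identity and charge the light ones by their mass. This makes the phenomenon explicit but costs a threshold $\eta$ and some case analysis, whereas the paper's identity $\min=\alpha\beta/\max$ does the averaging exactly. Both routes rest on the same inputs: the near-Moore asymptotics and $W\le(1+o(1))\sqrt m$. Your final clause, bounding $\max\{m,n\}$ for fixed $m$ via pressure and the product identity, is the paper's argument.

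\emph{Fixes needed.}
\begin{enumerate}
\item \emph{Extra factor $W$ in the light charge.} As written, $(\alpha_s+\beta_s)\cdot 2W\max(e_A,e_B)$ summed over light colors gives only $o(m^{3/2})\cdot O(\sqrt m)\cdot O(m^{3/2})=o(m^{7/2})$, not $o(m^3)$. The correct bound is $\sum_t\max(\alpha_s\alpha_t,\beta_s\beta_t)\le 2\alpha_s e_A+2\beta_s e_B\le 2(\alpha_s+\beta_s)\max(e_A,e_B)$, with no $W$. This covers heavy--light and light--light pairs together.
\item \emph{Light colors carry $o(m^{3/2})$ total mass: the $y$-mass of low-$y$ colors needs an argument.} From $X_L:=\sum_{y_c\le(1-\eta)\sqrt m}x_c=o(m^{3/2})$ you only get a priori $\sum_{y_c\le(1-\eta)\sqrt m}y_c\le(1-\eta)\sqrt m\,X_L=o(m^2)$, which is too weak. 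Argue as follows.
\begin{itemize}
\item Symmetrically, $Y_L:=\sum_{x_c\le(1-\eta)\sqrt m}y_c=o(m^{3/2})$, using $x_c\le(1+o(1))\sqrt n$ and $e_B\le(\tfrac12+o(1))n^{3/2}$.
\item If a light color has both coordinates below the threshold, its $x_c$ is counted in $X_L$ and its $y_c$ in $Y_L$.
\item If only one coordinate is below the threshold, that coordinate is smaller than the other, which is counted in $X_L$ or $Y_L$.
\item Hence the total light mass is at most $2(X_L+Y_L)=o(m^{3/2})$.
\end{itemize}
\end{enumerate}

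Finally, you never prove the advertised count $r\le(1+o(1))m$, and you do not need it. Light signed colors could number as many as $o(m^{3/2})$. What your conclusion actually uses is that heavy signed colors number at most $(1+O(\eta))n$, together with the light-mass bound.
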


\begin{proof}
If no such $M_0$ existed, then $\min\{m,n\}$ would be unbounded over the family. Passing to a
subsequence and interchanging the two sides when necessary, we may assume
\[
        m=\min\{m,n\}\to\infty,\qquad m\le n .
\]
Write $w_s=\max(\alpha_s,\beta_s)$ and $W=\max_s w_s$. By
Proposition~\ref{prop:nearmoore},
\[
        n=(1+o(1))m,\qquad
        e_A,e_B=(\tfrac12+o(1))m^{3/2},\qquad
        W\le(1+o(1))\sqrt m .
\]

\emph{Left side of Lemma~\ref{lem:wdf}.} The leading terms are
$\tfrac12(mn^2+nm^2)=\tfrac12mn(m+n)=(1+o(1))m^3$. The subtracted terms are lower order:
$\sum_s\alpha_s\beta_s^2\le W\sum_s\alpha_s\beta_s=W(mn-\delta)=O(m^{5/2})$, and likewise for
$\sum_s\beta_s\alpha_s^2$. Hence the left side equals $(1+o(1))m^3$.

\emph{Right side.} Since $\max(\alpha_s\alpha_t,\beta_s\beta_t)\le w_sw_t$,
\[
        \sum_{\{s,t\}}\max(\alpha_s\alpha_t,\beta_s\beta_t)\ \le\ \tfrac12\Bigl(\sum_s w_s\Bigr)^2 .
\]
Now $\sum_s w_s=\sum_s\bigl(\alpha_s+\beta_s-\min(\alpha_s,\beta_s)\bigr)
=2e_A+2e_B-\sum_s\min(\alpha_s,\beta_s)$, and since
$\min(\alpha_s,\beta_s)=\alpha_s\beta_s/w_s\ge\alpha_s\beta_s/W$,
\[
        \sum_s\min(\alpha_s,\beta_s)\ \ge\ \frac{1}{W}\sum_s\alpha_s\beta_s=\frac{mn-\delta}{W}
        \ \ge\ (1-o(1))m^{3/2},
\]
while $2e_A+2e_B=(2+o(1))m^{3/2}$. Therefore $\sum_s w_s\le(1+o(1))m^{3/2}$, and the right side
is at most $(\tfrac12+o(1))m^3$.

Combining the two estimates, $(1+o(1))m^3\le(\tfrac12+o(1))m^3$, which is false for large $m$;
hence $\min\{m,n\}\le M_0$. Finally, after interchanging $A$ and $B$ if necessary, fix
$m=\min\{m,n\}\le n$. For every color, pressure and the half-size bound give
\[
        y_c\le\frac{2e_A-x_c+\delta}{m-x_c}
        \le\frac{2M(m)+\delta}{m-h_m}
        \le\frac{4M(m)+2\delta}{m-1},
\]
where $h_m=\lfloor(m+1)/2\rfloor$ and $m-h_m=\lfloor m/2\rfloor\ge(m-1)/2$. This is a bound
depending only on $m$. With at most $e_A\le M(m)$ colors and each $x_c\le h_m$, the product identity
then forces $\tfrac{mn-\delta}2=\sum_c x_cy_c=O_m(1)$, so $\max\{m,n\}=n$ is bounded in terms of
$m$.
\end{proof}

The bound $W\le(1+o(1))\sqrt m$ is essential: without it the weighted inequality is satisfiable,
and indeed an ``anti-correlated'' profile with $e_A$ a constant factor above the girth-five
Moore bound evades it. It is precisely the near-Moore squeeze of
Proposition~\ref{prop:nearmoore} --- forcing every signed color to be thin --- that makes the
left side win.

\subsection{From asymptotic to exact: closing the frontier}

Theorem~\ref{thm:asymp53} leaves only a bounded sporadic remainder. We eliminate it by a finite,
auditable verification at the level of signed-color \emph{profiles}---the multiset of pairs
$(x_c,y_c)$ over active colors---without ever enumerating product structures. We first record the
constraints a profile must satisfy, then reduce to an explicit short list, then clear the list.
Throughout we may restrict to $m,n\ge 5$:

\begin{lemma}[support at the frontier]\label{lem:fivesupport}
Let $\Pi$ be an orientable no-fold product structure, even or odd in the standing taiko sense,
with $\girth(L_1)\ge 6$ and $\girth(L_A),\girth(L_B)\ge 5$, of type $(m,n)$ with $m\le n$.
Then $m,n\ge 5$.
\end{lemma}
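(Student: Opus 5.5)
The plan is to mirror the proof of Proposition~\ref{prop:six}, replacing girth six by girth five. First I reduce to $m\ge 3$ using Lemma~\ref{lem:m2}, which excludes $\min\{m,n\}\le 2$ whenever $\girth(L_1)\ge 6$. Next I note that for $m,n\ge 3$ no vertex of $L_A$ or $L_B$ is isolated. There is at most one defect, so every row and every column contains a non-defect position, and that position lies in a $2$-cell, which produces a horizontal edge at the vertex.

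Case $m=3$. The graph $L_A$ has three vertices and girth at least $5$, so it is acyclic. Having no isolated vertex, it has a vertex of degree exactly one. Lemma~\ref{lem:degreeone} then gives $n\le 3$, so the type is $(3,3)$. Lemma~\ref{lem:threethree} excludes this type, because the hypothesis $\girth(L_{AB})\ge 5\ge 4$ triggers its first alternative.

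Case $m\ge 4$, so $n\ge m\ge 4$. A degree-one vertex in $L_A$ would force $n\le 3$ by Lemma~\ref{lem:degreeone}, and a degree-one vertex in $L_B$ would force $m\le 3$; both are impossible. Together with the absence of isolated vertices, $L_A$ and $L_B$ have minimum degree at least $2$. A finite graph of minimum degree at least $2$ contains a cycle, and girth at least $5$ makes that cycle have at least five vertices. Hence $m,n\ge 5$.

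I do not expect a real obstacle here. The only point needing care is that Lemma~\ref{lem:degreeone} requires $m,n\ge 3$, and that hypothesis is secured by Lemma~\ref{lem:m2}. I also need to check that Lemma~\ref{lem:threethree} really applies under the weaker hypothesis $\girth(L_{AB})\ge 5$. It does, since that lemma only needs $\girth(L_{AB})\ge 4$: it rules out the two triangles in the $\{(2,1),(1,2)\}$ profile, and it excludes the $\{(2,2)\}$ profile using $\girth(L_1)\ge 6$ alone.
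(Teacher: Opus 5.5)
Your proposal is correct and follows essentially the paper's proof: Lemma~\ref{lem:m2} for $m\ge3$, the $m=3$ case reduced to $(3,3)$ and killed by Lemma~\ref{lem:threethree} via $\girth(L_{AB})\ge4$, and Lemma~\ref{lem:degreeone} with the no-isolated-vertex observation for the rest. The only cosmetic difference is that the paper treats $m=4$ directly, noting that $L_A$ is a forest and so has a degree-one vertex, and then takes $n\ge m\ge5$. Your minimum-degree-two argument on both sides, as in Proposition~\ref{prop:six}, is equally valid.
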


\begin{proof}
Lemma~\ref{lem:m2} gives $m\ge 3$. By Lemma~\ref{lem:degreeone}, a vertex of $L_A$ of degree one forces $n\le 3$ and, symmetrically,
a degree-one vertex of $L_B$ forces $m\le 3$; and no vertex is isolated, since every row and
column meets a $2$-cell. Since $L_{AB}=L_A\sqcup L_B$ and $\girth(L_A),\girth(L_B)\ge 5$, we have
$\girth(L_{AB})\ge 5\ge 4$.

If $m=3$, then $L_A$ has girth at least five on three vertices, hence is acyclic; having no
isolated vertex it has a vertex of degree one, so $n\le 3$ and $\Pi$ has type $(3,3)$, excluded
by Lemma~\ref{lem:threethree}. If $m=4$, then $L_A$ has no $3$- or $4$-cycle and is therefore a
forest; again it has a degree-one vertex, forcing $n\le 3$, contrary to $m\le n$. Hence
$m\ge 5$, and so $n\ge 5$.
\end{proof}

\begin{proposition}[profile constraints]\label{prop:profile}
Let $\Pi$ be an orientable no-fold product structure, even or odd in the standing taiko sense,
with $\girth(L_1)\ge 6$, $\girth(L_A),\girth(L_B)\ge 5$, type $(m,n)$, defect
$\delta\in\{0,1\}$, and $m\le n$. Then
$\delta\equiv mn\pmod 2$, and the active colors give positive integers $(x_c,y_c)$ with
$x_c\le h_m$, $y_c\le h_n$ obeying
\begin{itemize}\itemsep2pt
\item product identity: $\sum_c x_cy_c=\tfrac{mn-\delta}2$;
\item girth-five Moore: $e_A:=\sum_c x_c\le M(m)$ and $e_B:=\sum_c y_c\le M(n)$;
\item Fisher: $\sum_c x_c(x_c-1)\le\binom m2$ and $\sum_c y_c(y_c-1)\le\binom n2$;
\item pressure: $y_c(m-x_c)\le 2e_A-x_c+\delta$ and $x_c(n-y_c)\le 2e_B-y_c+\delta$ for each $c$;
\item weighted dual Fisher, in the profile form below, where the final sum is over unordered pairs of distinct active colors,
\begin{multline}\label{eq:wdfprofile}
        \tfrac12\bigl(mn^2-\delta(2n-1)-2\textstyle\sum_c x_cy_c^2\bigr)
       +\tfrac12\bigl(nm^2-\delta(2m-1)-2\sum_c y_cx_c^2\bigr)\\
        \le\ \sum_c\max(x_c^2,y_c^2)\ +\ 4\!\!\sum_{\{c,c'\}}\!\max(x_cx_{c'},\,y_cy_{c'}).
\end{multline}
\end{itemize}
\end{proposition}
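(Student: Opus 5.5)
Most of the listed constraints are restatements of the Structure Theorem (Theorem~\ref{thm:structure}) specialized to active colors, so the plan is to collect them one at a time and to spend real effort only on the profile form \eqref{eq:wdfprofile} of the weighted dual Fisher inequality. Since $\girth(L_1)\ge6$, the Structure Theorem applies. Parity $\delta\equiv mn\pmod 2$ is part of the standing taiko sense. Positivity of $x_c,y_c$ holds because an active color occurs in a $2$-cell and so contributes at least one collapsed edge on each side. The bounds $x_c\le h_m$ and $y_c\le h_n$ are the half-size bounds (v); the product identity is (iv) divided by two; Fisher is (vii); and pressure is (vi) verbatim. For the Moore constraint, $e_A=\sum_c x_c$ counts the collapsed edges of $L_A$, a simple graph of girth at least $5$ on $m$ vertices, so Lemma~\ref{lem:moore5} gives $e_A\le M(m)$, and likewise $e_B\le M(n)$.

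For \eqref{eq:wdfprofile}, I would start from Lemma~\ref{lem:wdf} and rewrite both sides in terms of colors, using balance (iii): the signed colors $(c,\out)$ and $(c,\inn)$ each have $(\alpha_s,\beta_s)=(x_c,y_c)$. On the left, $\sum_s\alpha_s\beta_s^2=2\sum_c x_cy_c^2$ and $\sum_s\beta_s\alpha_s^2=2\sum_c y_cx_c^2$, which gives exactly the displayed left-hand side. On the right, I would split the unordered pairs $\{s,t\}$ of distinct signed colors into two types. The first type is the pair $\{(c,\out),(c,\inn)\}$ over a single color, one per $c$, contributing $\max(x_c^2,y_c^2)$. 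The second type consists of pairs whose signed colors lie over distinct colors $c\ne c'$; each unordered pair $\{c,c'\}$ gives $2\cdot2=4$ such pairs, each contributing $\max(x_cx_{c'},y_cy_{c'})$. Summing the two types gives the right-hand side of \eqref{eq:wdfprofile}.

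The only point needing care is that the sums are over active colors, whereas Lemma~\ref{lem:wdf} sums over all signed colors. I would check that an inactive color contributes no incidences, so its signed colors have $\alpha_s=\beta_s=0$. Such terms vanish from every sum on both sides, and discarding them changes nothing. I also need the identification $|A_{(c,\out)}|=x_c$, with $x_c$ counting collapsed edges, as recorded in Lemma~\ref{lem:gm-dictionary}. I expect no real obstacle beyond this bookkeeping of pair types and multiplicities.
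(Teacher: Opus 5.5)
Your proposal is correct and follows the paper's proof essentially verbatim. Both collect the constraints from items (iv)--(vii) of the Structure Theorem and Lemma~\ref{lem:moore5}, then obtain \eqref{eq:wdfprofile} by substituting the balanced sizes $(x_c,y_c)$ into Lemma~\ref{lem:wdf} with the same one-pair-per-color and four-pairs-per-color-pair count. The only differences are minor: you take the parity from the standing convention rather than from the product identity, and you add a harmless check on inactive colors.
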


\begin{proof}
The parity is forced by $2\sum_c x_cy_c=mn-\delta$, item~(iv) of the Structure Theorem. The Moore
bounds are Lemma~\ref{lem:moore5} applied to the girth-five graphs $L_A,L_B$, whose edge counts
are $e_A=\sum_c x_c$ and $e_B=\sum_c y_c$. Fisher and pressure are items~(vii) and~(vi).
For~\eqref{eq:wdfprofile}, each color $c$ contributes two signed colors of common size
$(x_c,y_c)$; substituting in Lemma~\ref{lem:wdf} gives $\sum_s\alpha_s\beta_s^2=2\sum_c x_cy_c^2$
and $\sum_s\beta_s\alpha_s^2=2\sum_c y_cx_c^2$ on the left, while on the right the two signed
colors of one color contribute $\max(x_c^2,y_c^2)$ and each pair of distinct colors contributes
$4\max(x_cx_{c'},y_cy_{c'})$.
\end{proof}

\begin{proposition}[computer-assisted reduction to a finite list]\label{prop:reduce53}
If the constraints of Proposition~\ref{prop:profile} are simultaneously satisfiable for some
$(m,n,\delta)$ with $5\le m\le n$, $\delta\in\{0,1\}$, and $\delta\equiv mn\pmod2$, then
$(m,n,\delta)$ belongs to the explicit set $\mathcal F$ of the
$24$ triples
\begin{align*}
\mathcal F=\{&(5,5,1),(5,7,1),(7,7,1),(7,8,0),(7,9,1),(8,8,0),(8,9,0),(8,10,0),\\
&(9,9,1),(9,10,0),(9,11,1),(9,13,1),(10,10,0),(10,14,0),(10,15,0),\\
&(11,11,1),(11,13,1),(11,15,1),(13,13,1),(13,15,1),(13,17,1),\\
&(14,14,0),(14,15,0),(15,15,1)\},
\end{align*}
each with $\min\{m,n\}\le 15$ and $\max\{m,n\}\le 17$.
\end{proposition}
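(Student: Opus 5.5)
The plan is a two-stage finite computation: first an explicit, computable bound on the types $(m,n)$ that can occur, then an exhaustive search over the signed-color profiles of Proposition~\ref{prop:profile} for every type inside that bound.

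\emph{Stage one: making Theorem~\ref{thm:asymp53} effective.} I would redo the proof of Proposition~\ref{prop:nearmoore} and Theorem~\ref{thm:asymp53} with explicit inequalities in place of $o(1)$ terms. Write $u=\max_c y_c$ and $v=\max_c x_c$. Pressure with $e_A\le M(m)$ and $e_B\le M(n)$ gives
\[
u\le\frac{2M(m)+\delta}{m-v},\qquad v\le\frac{2M(n)+\delta}{n-u},
\]
together with the half-size caps $h_m,h_n$. Iterating this pair of bounds a bounded number of times gives explicit width bounds $W(m,n)$. I would then insert them into the chain
\[
\sum_s w_s\le 2M(m)+2M(n)-\frac{mn-\delta}{W}
\]
and into the bound $\sum_s\alpha_s\beta_s^2\le W(mn-\delta)$ used for the left side of Lemma~\ref{lem:wdf}. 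The resulting necessary inequality is
\[
\tfrac12\bigl(mn(m+n)-\delta(2m+2n-2)\bigr)-W(mn-\delta)\le\tfrac12\Bigl(\sum_s w_s\Bigr)^2 .
\]
This should fail for all $m\ge M_0$, with $M_0$ explicit and of moderate size (I hope around $40$--$60$). For each $m<M_0$, the final paragraph of the proof of Theorem~\ref{thm:asymp53} gives an explicit $n_{\max}(m)$ from
\[
\frac{mn-\delta}{2}\le h_m\,M(m)\,\frac{4M(m)+2\delta}{m-1}.
\]
Combined with Lemma~\ref{lem:fivesupport}, this leaves an explicit finite window $5\le m\le n\le n_{\max}(m)$.

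\emph{Stage two: enumerating profiles in the window.} For each admissible $(m,n,\delta)$ with $\delta\equiv mn\pmod 2$, I would enumerate multisets of pairs $(x_c,y_c)\in[1,h_m]\times[1,h_n]$ by depth-first search, adding pairs in lexicographically nonincreasing order. The search maintains running totals of $\sum x_c$, $\sum y_c$, $\sum x_cy_c$, $\sum x_c(x_c-1)$ and $\sum y_c(y_c-1)$, and prunes as soon as one of the following fails.
\begin{itemize}
\item Moore budgets: $\sum x_c\le M(m)$ and $\sum y_c\le M(n)$.
\item Fisher budgets, as in Proposition~\ref{prop:profile}.
\item Product identity: the running $\sum x_cy_c$ must not exceed $(mn-\delta)/2$, and the target must still be reachable with the remaining Moore budget.
\end{itemize}
Pressure depends on the final $e_A,e_B$. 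During the search I would therefore enforce it in the weakened form obtained by replacing $e_A,e_B$ with $M(m),M(n)$, and check it exactly once a profile is complete. The weighted dual Fisher inequality \eqref{eq:wdfprofile} is quadratic in the profile, so it is checked only at the leaves. The type $(m,n,\delta)$ is recorded as surviving if at least one complete profile passes every constraint; the claim is that the surviving set is exactly $\mathcal F$. To keep the proof auditable, the output should list each excluded type with the constraint that killed it, and list one witness profile for each triple in $\mathcal F$.

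\emph{Main obstacle.} I expect the difficulty to be the size of the search, not its logic. Near the diagonal, with $m$ in the range $15$--$40$, the Moore budget allows on the order of $m^{3/2}/2$ colors, so the number of multisets explodes. To control this I would first use pressure to cap $x_c$ and $y_c$ by roughly $2\sqrt m$ instead of the half-sizes $h_m,h_n$, which shrinks the alphabet of pairs sharply. I would also prune with an upper bound on the product sum still reachable given the remaining budgets and the current maximum allowed pair. If the effective $M_0$ from stage one turns out too large for brute force, I would tighten it by an exact rational optimization of the chain in stage one over $W$ (rather than using the crude $W\le(1+o(1))\sqrt m$), before enlarging the search.
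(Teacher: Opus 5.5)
You have misidentified what your stage-two search will output. Your search itself is sound as a filter: weakened pressure during the search, and exact pressure plus \eqref{eq:wdfprofile} at the leaves, discard only infeasible branches. But $\mathcal F$ is not the set of types admitting a profile that meets \emph{all} the constraints of Proposition~\ref{prop:profile}. It is the survivor set of a much coarser scalar test. Your search would in fact return the empty set. Outside $\mathcal F$ the paper's scalar corner test already fails. On $\mathcal F$, Lemma~\ref{lem:cert53} and Table~\ref{tab:cert} show that eleven of the $24$ triples admit no profile even before \eqref{eq:wdfprofile} is imposed, and on each of the other thirteen every profile has $\mathrm L-\mathrm R>0$. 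So your assertion that the surviving set is exactly $\mathcal F$ is false, and the audit step of listing a witness profile for each triple of $\mathcal F$ cannot be carried out. The inclusion the proposition actually asserts would still follow, since $\varnothing\subseteq\mathcal F$, so nothing is lost logically. What your computation would really prove, though, is the stronger Theorem~\ref{thm:p5} in one pass; the list $\mathcal F$ plays no role in it and cannot be regenerated by it.

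The paper's route is different and much lighter. Your stage-one necessary inequality is exactly \eqref{eq:scalarwdf}. The paper uses it, together with reachability \eqref{eq:reach}, as a per-type filter over the whole window, not only to locate $M_0$. The test is run at the Moore corner $(M(m),M(n))$, with $W,\bar x,\bar y$ maximized over cells $(x,y)$ that satisfy \emph{both} pressure inequalities simultaneously. A corner-monotonicity claim shows that passing this test is necessary for any profile. The window is then handled in three pieces:
\begin{itemize}
\item For $m\ge289$ an analytic tail applies: the girth-five pressure--Fisher chord bound gives $n<m+2\sqrt m$, then $W\le\sqrt m+5$, then $2(s+5)^2\Phi_{\mathrm{low}}$ is shown positive after the substitution $t=s-17$.
\item For $16\le m\le288$ an exact scalar scan finds no survivor.
\item For $5\le m\le15$, $m\le n\le N_m$, the same scalar test returns precisely the $24$ triples.
\end{itemize}
Profiles are enumerated only afterwards, in Lemma~\ref{lem:cert53}, and only on $\mathcal F$. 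This avoids exactly the combinatorial explosion you anticipate.

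Your width bounds are also noticeably weaker than the paper's cell-wise maxima at small $m$, because you iterate the two pressure bounds separately on $\max_c x_c$ and $\max_c y_c$. At $(16,16,0)$ your iteration stalls at $W=6$, for which \eqref{eq:scalarwdf} holds ($2560\le\tfrac{26912}{9}$). Joint admissibility gives $W=5$, and then \eqref{eq:scalarwdf} fails ($2816>\tfrac{59168}{25}$). So a scalar test built on your box bounds would not reproduce $\mathcal F$ either; proving the statement as written by a cheap scalar computation needs the cell-wise maxima.
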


\begin{proof}
The hypothesis gives $5\le m\le n$. Put $w_c=\max(x_c,y_c)$,
$W=\max_c w_c$, $\Lambda=mn-\delta$, and
$C=\tfrac12mn(m+n)-\delta(m+n-1)$. Bounding the left side of~\eqref{eq:wdfprofile} below by
$x_c,y_c\le W$ gives $\sum_c x_cy_c(x_c+y_c)\le W\Lambda$, so the left side is at least $C-W\Lambda$. On the
right, $\max(x_c^2,y_c^2)=w_c^2$ and $\max(x_cx_{c'},y_cy_{c'})\le w_cw_{c'}$, so the right side
is at most $2(\sum_c w_c)^2$; and since $\sum_c\min(x_c,y_c)=\sum_c x_cy_c/w_c\ge \Lambda/(2W)$,
\[
        \sum_c w_c=e_A+e_B-\sum_c\min(x_c,y_c)\le e_A+e_B-\frac{\Lambda}{2W}.
\]
Hence~\eqref{eq:wdfprofile} forces the scalar inequality
\begin{equation}\label{eq:scalarwdf}
        C-W\Lambda\ \le\ \tfrac12\bigl(2e_A+2e_B-\tfrac{\Lambda}{W}\bigr)^2 ,
\end{equation}
and the product identity adds, with $\bar y=\max_c y_c$ and $\bar x=\max_c x_c$,
\begin{equation}\label{eq:reach}
        \tfrac{\Lambda}{2}=\sum_c x_cy_c\le\bar y\,e_A\quad\text{and}\quad\tfrac{\Lambda}{2}\le\bar x\,e_B .
\end{equation}
For a budget pair $(E_A,E_B)$, call a pair $(x,y)$ 
\emph{pressure-admissible at $(E_A,E_B)$} if
\[
\begin{gathered}
        1\le x\le h_m,\qquad 1\le y\le h_n,\\
        y(m-x)\le2E_A-x+\delta,
        \qquad
        x(n-y)\le2E_B-y+\delta .
\end{gathered}
\]
Let $W(E_A,E_B)$, $\bar x(E_A,E_B)$, and $\bar y(E_A,E_B)$ denote, respectively, the maxima of
$\max(x,y)$, of $x$, and of $y$ over all pairs pressure-admissible at $(E_A,E_B)$ (if there is no
admissible pair, no profile exists and the triple is already excluded). These are maxima over the
full admissible cell set, not over a particular profile.

\begin{claim}[corner monotonicity]
For fixed $(m,n,\delta)$, suppose~\eqref{eq:scalarwdf} and~\eqref{eq:reach} hold for some profile
with edge budgets $e_A\le M(m)$ and $e_B\le M(n)$. Then the same two inequalities hold at the
Moore corner $(E_A,E_B)=(M(m),M(n))$ when $W,\bar x,\bar y$ are replaced by the all-admissible
maxima $W(E_A,E_B),\bar x(E_A,E_B),\bar y(E_A,E_B)$.
\end{claim}

\begin{proof}[Proof of claim]
The profile's colors are pressure-admissible at their actual budgets $(e_A,e_B)$, so replacing the
profile maxima by the all-admissible maxima at $(e_A,e_B)$ can only increase $W,\bar x,\bar y$.
The reachability inequalities~\eqref{eq:reach} then only relax, since their right sides increase.
In~\eqref{eq:scalarwdf}, the left side $C-W\Lambda$ decreases as $W$ grows. For the right side, the
reachability inequalities and $\bar x,\bar y\le W$ give
\[
        \frac{\Lambda}{2}\le \bar y e_A\le W e_A,
        \qquad
        \frac{\Lambda}{2}\le \bar x e_B\le W e_B,
\]
hence $\Lambda/W\le2\min(e_A,e_B)$ and
$2e_A+2e_B-\Lambda/W\ge2\max(e_A,e_B)>0$. The bracket is nondecreasing in each of $e_A,e_B,W$ on this
admissible range, because $\Lambda/W$ decreases when $W$ grows; being nonnegative, its square is also
nondecreasing. Thus~\eqref{eq:scalarwdf} also relaxes when one replaces profile maxima by
all-admissible maxima at the same budget.

Finally, as $(E_A,E_B)$ increase, the pressure-admissible cell set only grows, so
$W(E_A,E_B)$, $\bar x(E_A,E_B)$, and $\bar y(E_A,E_B)$ are nondecreasing. The preceding
monotonicity therefore shows that both~\eqref{eq:scalarwdf} and~\eqref{eq:reach} continue to hold
as the budgets are raised from $(e_A,e_B)$ to the Moore corner $(M(m),M(n))$.
\end{proof}

Consequently, a triple for which the corner test fails supports no profile satisfying the full
constraints, and it suffices to test that corner.

When $n$ is large relative to $m$ the corner test~\eqref{eq:reach} already fails: since
$R(x)=(2M(m)+\delta-x)/(m-x)$ is increasing in $x$ and $x\le h_m\le(m+1)/2$, pressure at the
corner gives $\bar y\le R(h_m)\le 2(2M(m)+\delta)/(m-1)=O(\sqrt m)$, so $\bar y\,e_A=O(m^2)$, whence
$\tfrac{\Lambda}{2}\le\bar y\,e_A$ forces $n=O(m)$. In the remaining near-diagonal range the corner test is
a finite arithmetic check; we bound $\min\{m,n\}$ \emph{explicitly}, so that completeness rests on
a finite computation alone, with no appeal to the qualitative Theorem~\ref{thm:asymp53}.
For the finite scan below we use the following exact reachability cutoff.  For $m\ge5$ and
$\eta\in\{0,1\}$ define

\[
\begin{aligned}
        \bar y_m(\eta)&=\max_{1\le x\le h_m}
        \left\lfloor\frac{2M(m)+\eta-x}{m-x}\right\rfloor,\\
        N_m&=\max_{\eta\in\{0,1\}}
        \max\left\{m,
        \left\lfloor\frac{2M(m)\bar y_m(\eta)+\eta}{m}\right\rfloor\right\}.
\end{aligned}
\]
Indeed, for parity $\delta=\eta$, pressure at the Moore corner gives $\bar y\le\bar y_m(\eta)$;
then~\eqref{eq:reach} implies
$(mn-\eta)/2\le M(m)\bar y_m(\eta)$, hence
$n\le \lfloor(2M(m)\bar y_m(\eta)+\eta)/m\rfloor\le N_m$.

The finite scan used below is therefore completely specified at the scalar level. For each
integer $m$ in the relevant range, loop over $m\le n\le N_m$, set the forced parity
$\delta\equiv mn\pmod2$, compute the all-admissible pressure maxima
$W(M(m),M(n))$, $\bar x(M(m),M(n))$, and $\bar y(M(m),M(n))$ at the Moore corner, and retain the
triple only if both reachability inequalities~\eqref{eq:reach} and the scalar inequality
\eqref{eq:scalarwdf} hold. The robustness set also checked by the ancillary script is the weaker
scalar-survivor superset obtained by dropping reachability from this last retention test, but only
inside the same reachability-bounded range $n\le N_m$; it is not asserted to be complete over all
$n$ once that range is removed. This is a proof-object traceability statement, not an additional
mathematical hypothesis.

\smallskip
\emph{Effective tail: the corner test is violated for every $m\ge 289$.} Write $s=\sqrt m$ and
$v=n-m\ge0$. Since the pressure profile $R(x)=(2M(m)+\delta-x)/(m-x)$ is convex on $[1,h_m]$, the
bound $y_c\le R(x_c)$, the chord inequality, and Fisher give the girth-five pressure--Fisher bound
$\tfrac{mn-\delta}2=\sum_c x_cy_c\le R(1)M(m)+\tfrac{R(h_m)-R(1)}{h_m-1}\binom m2$ (the mechanism
of Lemma~\ref{lem:pf}, now with the girth-five Moore bound). The replacement used next is
monotone in the right direction. For $m\ge16$ one has
$M(m)=\lfloor\tfrac m2\sqrt{m-1}\rfloor\ge m$, hence $2M(m)+\delta>m$. For every budget
$E\ge M(m)$ used below put $R_{E,\delta}(x)=(2E+\delta-x)/(m-x)$; then
\begin{align*}
        R_{E,\delta}''(x)&=\frac{2(2E+\delta-m)}{(m-x)^3}>0,\\
        \frac{\partial R_{E,\delta}(x)}{\partial E}&=\frac2{m-x},
        &\frac{\partial R_{E,\delta}(x)}{\partial\delta}&=\frac1{m-x}.
\end{align*}
Since $1/(m-x)$ is increasing in $x$, the endpoint value $R_{E,\delta}(1)$ and the secant slope
of $R_{E,\delta}$ on $[1,h]$ are nondecreasing in $E$ and in $\delta$. Moreover
$R_{E,\delta}(1)>0$, so $E\mapsto E R_{E,\delta}(1)$ is nondecreasing. By convexity that secant
slope is also nondecreasing as the right endpoint $h$ moves to the right. Hence replacing $h_m$ by
$(m+1)/2$, the defect by the adverse value $\delta=1$, and $M(m)$ by any upper budget can only
increase the pressure--Fisher upper bound. Inserting
$M(m)\le\tfrac12 m\sqrt m$ gives the enlarged upper bound
\[
        \frac{s^6}{2(s^2-1)}+
        \frac{2(s^3-s^2+1)}{(s^2-1)^2}\binom{s^2}{2}.
\]
Thus the excess of $\tfrac{m(m+2\sqrt m)-1}2$ over this enlarged bound is
\[
\begin{aligned}
        &\frac{s^2(s^2+2s)-1}{2}
        -\left(\frac{s^6}{2(s^2-1)}+
        \frac{2(s^3-s^2+1)}{(s^2-1)^2}\binom{s^2}{2}\right)  \\
        &\hspace{6em}=\frac{s^4-2s^3-3s^2+1}{2(s^2-1)} .
\end{aligned}
\]
The denominator is positive for $s>1$, and the numerator is positive for $s\ge4$. Hence
$n<m+2\sqrt m<(s+1)^2$ for all $m\ge16$. For integer $n$, this implies
$n\le m+\lfloor 2\sqrt m\rfloor\le m+2\lfloor\sqrt m\rfloor+1$, exactly the finite range
used by the certificate below. In particular $2M(n)\le n\sqrt n\le n(s+1)$.

We bound the width at the corner. Since $2M(m)\le s^3$, pressure gives, for every admissible
color, $y_c\le(s^3-x_c+1)/(s^2-x_c)$; the right side is increasing in $x_c$, and $x_c\le
h_m\le(s^2+1)/2$, so
\[
        y_c\le \frac{2s^3-s^2+1}{s^2-1}\le 2s+3,
\]
the last inequality being equivalent to $2s^2-s-2\ge0$. The dual pressure inequality and
$2M(n)\le n(s+1)$ give $x_c\le(n(s+1)-y_c+1)/(n-y_c)$, increasing in $y_c$, so $y_c\le 2s+3$ and
$n\ge s^2$ yield
\[
        x_c\le \frac{n(s+1)-2s-2}{n-2s-3}\le s+5,
\]
the comparison following from $4n-2s^2-11s-13\ge 2s^2-11s-13\ge0$. Feeding $x_c\le s+5$ back into
the first bound gives
\[
        y_c\le \frac{s^3-s-4}{s^2-s-5}\le s+5,
\]
the last inequality being equivalent to $4s^2-9s-21\ge0$. All three displayed quadratic conditions
are positive for $s\ge17$, so every admissible color has $w_c=\max(x_c,y_c)\le s+5$, hence $W\le
s+5$, once $m\ge 289$. The corner inequality~\eqref{eq:scalarwdf} is
violated exactly when the margin $C-W\Lambda-\tfrac12(2e_A+2e_B-\Lambda/W)^2$ is positive. If
reachability~\eqref{eq:reach} fails, the triple is already eliminated. Otherwise
$\Lambda/2\le\bar y e_A$ and $\Lambda/2\le\bar x e_B$, and since $\bar x,\bar y\le W$ we have
$\Lambda/W\le 2\min(e_A,e_B)$; hence the bracket satisfies
$2e_A+2e_B-\Lambda/W\ge 2\max(e_A,e_B)>0$. On this admissible range the bracket is
nondecreasing in $W,e_A,e_B$ (because $\Lambda/W$ decreases as $W$ increases). More explicitly,
if $B=2e_A+2e_B-\Lambda/W>0$ and
$\Phi=C-W\Lambda-\tfrac12B^2$, then
$\partial_W\Phi=-\Lambda-\Lambda B/W^2<0$ and
$\partial_{e_A}\Phi=\partial_{e_B}\Phi=-2B<0$. Thus the margin is nonincreasing in those
variables. Replacing them by the upper
bounds $W=s+5$, $2e_A\le s^3$, and $2e_B\le(s^2+v)(s+1)$ can therefore only decrease it. Let
$m=s^2$, $n=s^2+v$, $\Lambda=mn-\delta$, and
$C=\tfrac12mn(m+n)-\delta(m+n-1)$, and define the resulting lower margin
\[
\Phi_{\mathrm{low}}(s,v,\delta)=
C-(s+5)\Lambda
-\frac12\left(s^3+(s^2+v)(s+1)-\frac{\Lambda}{s+5}\right)^2 .
\]
It suffices to show $\Phi_{\mathrm{low}}>0$. Equivalently,
\[
        2(s+5)^2\Phi_{\mathrm{low}}(s,v,\delta)
        =a_\delta(s)v^2+b_\delta(s)v+c_\delta(s).
\]
Part~0 of \texttt{taiko53\_certificate.py} certifies positivity of the displayed
pressure--Fisher quartic and width-step quadratics, and regenerates this scalar
weighted-dual-Fisher identity by exact integer polynomial arithmetic; the displayed polynomials
below are the simplified forms of $a_\delta,b_\delta,c_\delta$. For $\delta=1$,
\begin{align*}
        a&=(s^2-s-5)(s^2+11s+5),\\
        b&=3s^6+16s^5-97s^4-320s^3-302s^2-32s-60,\\
        c&=s^8-4s^7-111s^6-260s^5-281s^4-60s^3-78s^2+170s+299.
\end{align*}
Writing $t=s-17$, these substituted polynomials have only nonnegative coefficients; in decreasing
degree the lists for $a$, $b$, $c$ are
\begin{gather*}
        (1,78,2233,27888,128427),\\
        (3,322,14268,334104,4359705,30045122,85368840),\\
        (1,132,7505,239270,4655084,56141624,\\
        404257287,1547078676,2262186236).
\end{gather*}
These are polynomial identities in $\mathbb Z[t]$; nonnegative coefficients with positive constant
term imply positivity for every real $t\ge0$, hence every real $s\ge17$. Hence $a>0$, $b\ge0$, and $c>0$ for $s\ge17$. For the parity $\delta=0$ the leading coefficient
$a$ is unchanged, while
\[
        b=3s^6+16s^5-97s^4-320s^3-300s^2,\qquad
        c=s^8-4s^7-111s^6-260s^5-275s^4 ;
\]
under $t=s-17$ their coefficient lists, in decreasing degree, are
\[
        (3,322,14268,334104,4359707,30045222,85370022)
\]
for $b$ and
\begin{gather*}
        (1,132,7505,239270,4655090,56142092,\\
        404270829,1547251090,2263001495)
\end{gather*}
for $c$, again all nonnegative; again these identities are over $\mathbb Z[t]$, so $a>0$, $b\ge0$, $c>0$ for every real $s\ge17$ in this parity as well, and
both parities are covered. As $a>0$ and
$b\ge0$, the minimum of $a v^2+bv+c$ over $v\ge0$ is attained at $v=0$ and equals $c>0$; the
margin is therefore positive and~\eqref{eq:scalarwdf} fails for every $m\ge 289$ (any
$n>m+2\sqrt m$ having already been excluded above).

\smallskip
For $16\le m\le 288$ the corner test~\eqref{eq:scalarwdf}--\eqref{eq:reach} is evaluated directly
by the ancillary certificate of Remark~\ref{rem:cert} and finds no survivor; together with the
analytic tail above, which covers $m\ge 289$, this leaves
$m\le 15$ as the only possibility, since $m\le n$. For each $5\le m\le 15$ the reachability
bound~\eqref{eq:reach} confines $n$ to a finite range $n\le N_m$, and the exact-integer corner test
over that range returns precisely the $24$ triples $\mathcal F$, each with $\min\{m,n\}\le 15$ and
$\max\{m,n\}\le 17$. The ancillary certificate \texttt{taiko53\_certificate.py} described in Remark~\ref{rem:cert}
(SHA--256 checksum recorded there) carries out both finite scans, the small enumeration being run
redundantly over $5\le m\le 40$; it also regenerates the reachability-bounded $27$-triple scalar
survivor superset used as a robustness check. These finite arithmetic scans are part of the proof
object for this proposition. Hence $(m,n,\delta)\in\mathcal F$.
\end{proof}

\begin{lemma}[computer-assisted profile certificate]\label{lem:cert53}
For every $(m,n,\delta)\in\mathcal F$, no integer profile meeting the product identity, Moore,
Fisher, pressure, and half-size constraints of Proposition~\ref{prop:profile} also satisfies the
weighted dual Fisher inequality~\eqref{eq:wdfprofile}; equivalently, the listed profile constraints
and~\eqref{eq:wdfprofile} are jointly unsatisfiable. Table~\ref{tab:cert} records, for
each triple, the number of profiles meeting the product identity, Moore, Fisher, pressure, and
half-size constraints---the count is taken \emph{before} imposing~\eqref{eq:wdfprofile}---and, when
this number is positive, the least value of the difference
$\mathrm{L}-\mathrm{R}$ between the two sides of~\eqref{eq:wdfprofile}; eleven of the twenty-four
triples admit no such profile at all, and for each of the remaining thirteen this least value is
strictly positive, so~\eqref{eq:wdfprofile} fails on every one.
\end{lemma}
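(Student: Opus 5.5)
The plan is a finite, exact-integer enumeration at the level of color profiles, one triple $(m,n,\delta)\in\mathcal F$ at a time, with no product structures ever built. A profile is a finite multiset of pairs $(x_c,y_c)$ with $1\le x_c\le h_m$ and $1\le y_c\le h_n$. Its \emph{cell set} is determined in advance. Since pressure involves the global budgets $e_A=\sum_c x_c$ and $e_B=\sum_c y_c$, I would first fix the budgets: loop over $e_A\le M(m)$ and $e_B\le M(n)$. For each budget pair, the admissible cells are the pairs $(x,y)$ satisfying
\[
y(m-x)\le 2e_A-x+\delta,\qquad x(n-y)\le 2e_B-y+\delta.
\]
Then I would enumerate all multisets of admissible cells that meet three conditions:
\begin{itemize}
\item the $x$-coordinates sum exactly to $e_A$ and the $y$-coordinates exactly to $e_B$;
\item the products satisfy $\sum_c x_cy_c=(mn-\delta)/2$;
\item both Fisher sums stay within $\binom m2$ and $\binom n2$.
\end{itemize}
This is a bounded integer-partition search, because every part has $x_c\ge1$ and $y_c\ge1$. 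The number of colors is therefore at most $\min(e_A,e_B)$, which is at most $M(15)$ or $M(17)$, only a few dozen. The counts in Table~\ref{tab:cert} are exactly the totals of this enumeration before imposing \eqref{eq:wdfprofile}.

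For each surviving profile, I would evaluate both sides of \eqref{eq:wdfprofile} in exact integers. Doubling clears the halves, and the pair sum $\sum_{\{c,c'\}}\max(x_cx_{c'},y_cy_{c'})$ is computed directly. I would record the minimum of $\mathrm L-\mathrm R$. The lemma then says two things: for eleven triples the enumeration is empty, and for the other thirteen the minimum is strictly positive. That is precisely the statement that the constraints of Proposition~\ref{prop:profile} together with \eqref{eq:wdfprofile} are jointly unsatisfiable.

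To keep the search tractable and auditable, I would generate profiles in nonincreasing lexicographic order of cells. I would prune a branch as soon as any of the following fails:
\begin{itemize}
\item the partial $x$-sum, $y$-sum, product sum, or either Fisher sum exceeds its target;
\item the remaining product deficit cannot be reached. Since every remaining cell has $y\le \bar y$ (the largest admissible $y$), the rest of the profile contributes at most $\bar y$ times the remaining $x$-budget, and symmetrically for $x$.
\end{itemize}
The certificate script from Remark~\ref{rem:cert} would carry this out, listing each triple's profile count and least margin. I would cross-check the counts with an independent naive enumeration, for example dynamic programming over cells with state (partial $e_A$, partial $e_B$, partial product sum, partial Fisher sums).

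The main obstacle I expect is combinatorial blow-up for the largest triples, such as $(13,17,1)$, $(14,15,0)$ and $(15,15,1)$. There the Moore budgets allow roughly 25 to 30 edges per side, so the multiset count could be large. If naive enumeration is too slow, my first remedy would be a stronger bound that can be checked partway through a profile. Let $P$ be the set of cells already chosen. The right side of \eqref{eq:wdfprofile} is at most
\[
\tfrac12\Bigl(\sum_c w_c\Bigr)^2+\tfrac12\sum_c w_c^2,
\]
and $\sum_c w_c\le e_A+e_B-\sum_c\min(x_c,y_c)$, as in the proof of Proposition~\ref{prop:reduce53}. After the cells of $P$ are fixed, this upper bound and a matching lower bound on the left side can both be written in terms of the chosen cells plus worst-case completions. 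That lets a branch be cut as soon as \eqref{eq:wdfprofile} is already certain to fail.
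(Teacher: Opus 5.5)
This is essentially the paper's own proof: you fix the budget pair $(e_A,e_B)$, list the pressure-admissible cells for that budget, enumerate multiplicity vectors with exact budget and product sums and the Fisher bounds under necessary-only pruning, and evaluate \eqref{eq:wdfprofile} exactly on every surviving profile.

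One correction to your optional speed-up. At the color level the right side of \eqref{eq:wdfprofile} is bounded by $2(\sum_c w_c)^2-\sum_c w_c^2$, because each pair of distinct colors carries the factor $4$. Your bound $\tfrac12(\sum_c w_c)^2+\tfrac12\sum_c w_c^2$ undercounts the pair terms, so it is not an upper bound, and cutting with it would be unsound. Also, any pruning based on weighted dual Fisher must be kept out of the run that produces the pre-test counts of Table~\ref{tab:cert}. The paper does this by separating its elimination routine from its table routine.
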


\begin{proof}
The exhaustive verification in this lemma is performed by the checksum-identified ancillary script
\texttt{taiko53\_certificate.py}; the following paragraphs spell out the exact finite search that
the script executes. Fix $(m,n,\delta)\in\mathcal F$ and put $T=(mn-\delta)/2$. The enumeration is over the possible
edge budgets $e_A,e_B$ themselves, with
\[
        \left\lceil\frac{T}{h_n}\right\rceil\le e_A\le M(m),\qquad
        \left\lceil\frac{T}{h_m}\right\rceil\le e_B\le M(n),
\]
the lower bounds being the reachability constraints $T\le h_n e_A$ and $T\le h_m e_B$. For a fixed
budget pair $(e_A,e_B)$ the admissible cells are the finitely many pairs $(x,y)$ with
$1\le x\le h_m$, $1\le y\le h_n$ satisfying the two pressure inequalities
\[
        y(m-x)\le 2e_A-x+\delta,
        \qquad
        x(n-y)\le 2e_B-y+\delta .
\]
List them as $(x_1,y_1),\dots,(x_k,y_k)$. After this budget pair and cell list are fixed, the
remaining budget, product, and Fisher constraints are constraints on the integer multiplicities.
A profile at this budget is a vector
$(N_1,\dots,N_k)\in\mathbb Z_{\ge0}^k$ with
\[
        \sum_i N_i x_i=e_A,\qquad
        \sum_i N_i y_i=e_B,
        \qquad
        \sum_i N_i x_i y_i=T,
\]
and satisfying the two Fisher inequalities
\[
        \sum_i N_i x_i(x_i-1)\le\binom m2,\qquad
        \sum_i N_i y_i(y_i-1)\le\binom n2.
\]
The product identity and the two budget equalities bound every $N_i$, so the set of candidate
vectors is finite. The closure routine in the script recurses over the $N_i$, pruning
a partial assignment only when a budget or product sum is already exceeded, or when the remaining
cells cannot reach the remaining product sum; Fisher inequalities are checked at terminal profiles
in that routine, while the separate table routine also prunes Fisher excesses early. In all cases
only necessary failures are discarded. Concretely, if a partial assignment has sums
$s_x,s_y,s_{xy}$ and the remaining cells begin at index $j$, the suffix reachability tests include
\[
        (e_A-s_x)\max_{i\ge j}y_i<T-s_{xy}
        \quad\text{and}\quad
        (e_B-s_y)\max_{i\ge j}x_i<T-s_{xy},
\]
which are necessary failures of the product target. Each pruning inequality is a \emph{necessary}
condition for the corresponding global constraint to hold, so no profile meeting all the constraints
is discarded. On each enumerated profile~\eqref{eq:wdfprofile}
is an exact integer inequality, evaluated in exact arithmetic by the ancillary script of
Remark~\ref{rem:cert}. The elimination routine may first discard an edge budget that already fails
the scalar weighted-dual-Fisher necessary condition~\eqref{eq:scalarwdf}, derived in
Proposition~\ref{prop:reduce53} from weighted dual Fisher; this is only a necessary prefilter. The separate table routine omits that prefilter, so the counts in Table~\ref{tab:cert}
are genuinely pre-WDF counts of all profiles meeting the listed profile constraints. The outcome is
Table~\ref{tab:cert}, in which eleven triples have no profile meeting the constraints and every
one of the remaining thirteen has positive least $\mathrm{L}-\mathrm{R}$.
\end{proof}

\begin{table}[ht]
\centering
\renewcommand{\arraystretch}{1.1}
\begin{tabular}{rcc@{\qquad\qquad}rcc}
\hline
$(m,n,\delta)$ & $\#$ & $\min(\mathrm{L}{-}\mathrm{R})$ &
$(m,n,\delta)$ & $\#$ & $\min(\mathrm{L}{-}\mathrm{R})$\\
\hline
$(5,5,1)$  & $1$ & $14$  & $(10,10,0)$ & $4$  & $142$\\
$(5,7,1)$  & $0$ & ---   & $(10,14,0)$ & $0$  & ---\\
$(7,7,1)$  & $3$ & $41$  & $(10,15,0)$ & $0$  & ---\\
$(7,8,0)$  & $0$ & ---   & $(11,11,1)$ & $7$  & $106$\\
$(7,9,1)$  & $0$ & ---   & $(11,13,1)$ & $3$  & $144$\\
$(8,8,0)$  & $0$ & ---   & $(11,15,1)$ & $0$  & ---\\
$(8,9,0)$  & $1$ & $96$  & $(13,13,1)$ & $19$ & $360$\\
$(8,10,0)$ & $0$ & ---   & $(13,15,1)$ & $1$  & $484$\\
$(9,9,1)$  & $7$ & $68$  & $(13,17,1)$ & $0$  & ---\\
$(9,10,0)$ & $0$ & ---   & $(14,14,0)$ & $26$ & $488$\\
$(9,11,1)$ & $1$ & $48$  & $(14,15,0)$ & $15$ & $533$\\
$(9,13,1)$ & $0$ & ---   & $(15,15,1)$ & $88$ & $474$\\
\hline
\end{tabular}
\caption{The finite profile certificate (Lemma~\ref{lem:cert53}). For each $(m,n,\delta)\in
\mathcal F$, ``$\#$'' is the number of integer profiles meeting the product identity, Moore,
Fisher, pressure, and half-size constraints, and $\min(\mathrm{L}{-}\mathrm{R})$ is the least value
over those profiles of the left side minus the right side of~\eqref{eq:wdfprofile}; a positive
value means every profile violates weighted dual Fisher. Eleven triples admit no profile even
before the test.}
\label{tab:cert}
\end{table}

\begin{theorem}[computer-assisted closure of the $(5,3)$ frontier]\label{thm:p5}
No orientable no-fold product structure, even or odd in the standing taiko sense, has
$\girth(L_1)\ge 6$ and $\girth(L_A),\girth(L_B)\ge 5$. Consequently $\girth(L_1)=6\Rightarrow\girth(L_{AB})\le 4$, and
the horizontal frontier of Corollary~\ref{cor:frontier} satisfies $p^\ast=5$.
\end{theorem}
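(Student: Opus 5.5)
The plan is to assemble the theorem from the pieces just established, so the argument is a short chain of reductions. Suppose, for contradiction, that $\Pi$ is an orientable no-fold product structure in the standing taiko sense with $\girth(L_1)\ge6$ and $\girth(L_A),\girth(L_B)\ge5$. Interchange $A$ and $B$ if needed so that $m\le n$. Lemma~\ref{lem:fivesupport} then gives $5\le m\le n$. Because $\girth(L_1)\ge6$, the Structure Theorem applies. Together with the girth-five Moore bound (Lemma~\ref{lem:moore5}) and the weighted dual Fisher inequality (Lemma~\ref{lem:wdf}), Proposition~\ref{prop:profile} shows that the color profile $\{(x_c,y_c)\}$ of $\Pi$ satisfies all the listed profile constraints together with \eqref{eq:wdfprofile}, with $\delta\equiv mn\pmod2$.

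Since these constraints are simultaneously satisfiable at $(m,n,\delta)$, Proposition~\ref{prop:reduce53} places $(m,n,\delta)$ in the finite set $\mathcal F$. Lemma~\ref{lem:cert53} then states that for every triple in $\mathcal F$ no integer profile meets the product identity, Moore, Fisher, pressure and half-size constraints and also \eqref{eq:wdfprofile}. This is a contradiction, so no such $\Pi$ exists. One point to check here: the profile of an actual structure must be one of the profiles enumerated in the certificate. Its edge budgets $e_A=\sum_c x_c$ and $e_B=\sum_c y_c$ lie in the scanned ranges, because reachability gives $T\le h_n e_A$ and $T\le h_m e_B$ and Moore gives the upper bounds. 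Its cells are pressure-admissible at those actual budgets. So it is indeed counted.

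For the consequences, suppose $\girth(L_1)=6$. If $\girth(L_{AB})\ge5$, then, since $\girth(L_{AB})=\min\{\girth(L_A),\girth(L_B)\}$, both horizontal girths are at least $5$, contradicting what was just shown. Hence $\girth(L_{AB})\le4$. For the frontier, the first assertion with $p=5$ shows that no structure has $\girth(L_{AB})\ge5$ and $\halfgirth(L_1)\ge3$. Here I will use that $\girth(L_1)\ge6$ is equivalent to $\halfgirth(L_1)\ge3$, noting that forests are already excluded by Theorem~\ref{thm:A}, although that is not even needed. This gives $p^\ast\le5$. Corollary~\ref{cor:frontier}, via the affine construction of Theorem~\ref{thm:affine}, gives $p^\ast\ge5$, so $p^\ast=5$.

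The only real obstacle is conceptual bookkeeping rather than new mathematics. All the weight sits in the computer-assisted Proposition~\ref{prop:reduce53} and Lemma~\ref{lem:cert53}, and the assembly must be careful on two points. First, the side interchange $m\le n$ is legitimate: all hypotheses are symmetric in $A$ and $B$. Second, the parity $\delta$ is forced rather than chosen. Beyond that, the proof should simply cite these results.
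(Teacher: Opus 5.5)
Your proposal is correct and matches the paper's proof: you reduce to $5\le m\le n$ via Lemma~\ref{lem:fivesupport}, pass to the profile constraints of Proposition~\ref{prop:profile}, confine the type to $\mathcal F$ by Proposition~\ref{prop:reduce53}, and conclude with the certificate of Lemma~\ref{lem:cert53}, taking $p^\ast\ge5$ from Corollary~\ref{cor:frontier}. Your check that the actual budgets $(e_A,e_B)$ fall in the enumerated ranges, and your explicit derivation of $\girth(L_1)=6\Rightarrow\girth(L_{AB})\le4$, are details the paper leaves implicit, and they are consistent with it.
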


This theorem is computer-assisted exactly in the finite corner reduction of
Proposition~\ref{prop:reduce53} and the finite profile certificate of Lemma~\ref{lem:cert53}; the
main no-$\mathsf T_4$ theorem, Theorem~\ref{thm:main}, is not.

\begin{proof}
Every structure under the hypotheses yields a signed-color profile satisfying all constraints of
Proposition~\ref{prop:profile}, including the weighted dual Fisher inequality~\eqref{eq:wdfprofile};
hence profile nonexistence is sufficient. By symmetry, interchange $A$ and $B$ if necessary and
assume $m\le n$. Lemma~\ref{lem:fivesupport} then gives $5\le m\le n$; with
Proposition~\ref{prop:reduce53} the associated type therefore has
$(m,n,\delta)\in\mathcal F$. But by Lemma~\ref{lem:cert53} no profile meeting those constraints
satisfies~\eqref{eq:wdfprofile} for any triple in $\mathcal F$. Hence no triple in $\mathcal F$
supports a profile satisfying weighted dual Fisher, so no such structure exists; with
Corollary~\ref{cor:frontier} ($p^\ast\ge 5$) this gives $p^\ast=5$.
\end{proof}

\begin{remark}[on the finite certificate]\label{rem:cert}
The principal obstruction in this paper is structural.  Theorem~\ref{thm:main}, the
no-$\mathsf T_4$ corollary, and the affine-plane sharpness theorem are proved without computer
search and without enumeration of product structures.  The few small-support situations needed in
the proof are handled by short structural lemmas, not by a finite search.  The signed-rectangle
Structure Theorem, the middle-link dichotomy, and the pressure/Fisher inequalities are the
mechanisms that close Mineyev's triple-girth taiko route in the Garg--Mineyev finite-support
formulation for every support-size pair with both coordinates at least two.

The computational ingredients in the exact frontier theorem, Theorem~\ref{thm:p5}, are the
finite corner reduction of Proposition~\ref{prop:reduce53} and the final finite profile residue of
Lemma~\ref{lem:cert53}.  After the weighted dual Fisher inequality and the near-Moore reduction
have reduced the possible $(5,3)$ frontier to a short explicit list of small triples, the
ancillary file \texttt{taiko53\_certificate.py} supplies an auditable arithmetic certificate
for that residue. This ancillary file is part of the proof object for Theorem~\ref{thm:p5}. It
uses only the Python standard library and exact integer arithmetic. It
never enumerates product structures; it enumerates signed-color profiles satisfying the
necessary parity, Moore, product, Fisher, pressure, half-size, and reachability constraints
described above, and then checks the weighted dual Fisher inequality exactly.

Concretely, the script performs the finite corner reduction of
Proposition~\ref{prop:reduce53}, checks the $24$ triples of Table~\ref{tab:cert}, regenerates
the displayed profile counts and gaps, and also checks the larger $27$-triple scalar-survivor
superset as a robustness margin. This superset is explicitly the scalar-survivor set inside the
reachability-bounded range $n\le N_m$; the manuscript does not claim that the scalar-only test is
complete after removing that reachability bound. The expected finite set and table stored in the
script are not used to restrict the enumeration or prune the search. They are checked only after
the script regenerates the corner-survivor set, scalar-survivor set, and table values from the
inequalities. It is invoked as
\[
    \texttt{python3 taiko53\_certificate.py}.
\]
It takes no arguments, reads no input, and terminates successfully only if all Parts~0--3 agree
with the finite reductions and with Table~\ref{tab:cert}. On any discrepancy it raises a
\texttt{RuntimeError}; on success it prints a verification transcript ending with the conclusion
$p^\ast=5$. The checksum-identified run used for this manuscript was executed with Python~3.13.5.

The ancillary verification consists of a single standalone Python file. It is part of the proof
object for Theorem~\ref{thm:p5} and should be distributed under the filename
\texttt{taiko53\_certificate.py}; the checksum below identifies the byte-for-byte script. Running
the displayed command regenerates the verification transcript and all finite values in
Table~\ref{tab:cert}. The source archive should include this script under the stated name. It may also include the saved
successful transcript of the checksum-identified run for reader convenience and reproducibility;
the script itself, not the transcript, is the finite proof object.

The SHA--256 checksum of the script is
\begin{center}
\scriptsize\ttfamily
f85be6e43f2da854404c7f608d05a1f4a98fe52139c0c078f7a4cf415cc216f3
\end{center}
\end{remark}

\begin{remark}[sharpness of the horizontal frontier]\label{rem:frontier}
At $\halfgirth(L_1)=3$ the value $\girth(L_{AB})=3$ is realized by Example~10 of \cite{GargMineyev}, $\girth(L_{AB})=4$
by Theorem~\ref{thm:affine}, and $\girth(L_{AB})\ge 5$ is impossible: the dichotomy forbids
$\girth(L_{AB})\ge 6$ (Theorem~\ref{thm:B}), and Theorem~\ref{thm:p5} forbids
$\girth(L_{AB})=5$. Thus the horizontal frontier is exactly $p^\ast=5$, and the affine-plane
family of Theorem~\ref{thm:affine} is sharp. Two thresholds collaborate. The pressure--Fisher
method of Section~\ref{sec:B} settles $\girth(L_{AB})\ge 6$ but cannot reach $\girth(L_{AB})=5$ on
its own, for a quantitative reason: it forces a diagonal contradiction only when
$e_A\lesssim \frac{1}{2\sqrt2}m^{3/2}$, exactly the girth-six Moore bound, whereas girth five admits the
factor $\sqrt2$ more edges. Bridging that factor is precisely the work of the weighted dual Fisher
inequality (Lemma~\ref{lem:wdf}) with the near-Moore squeeze (Proposition~\ref{prop:nearmoore}) in
the unbounded regime, and of the finite profile certificate (Theorem~\ref{thm:p5}) in the bounded
one.
\end{remark}

\end{document}